\numberwithin{equation}{section}
\numberwithin{figure}{section}
\theoremstyle{plain}
\newtheorem{thm}{\protect\theoremname}
  \theoremstyle{definition}
  \newtheorem{defn}[thm]{\protect\definitionname}
  \theoremstyle{definition}
  \newtheorem{example}[thm]{\protect\examplename}
  \theoremstyle{remark}
  \newtheorem{rem}[thm]{\protect\remarkname}
  \theoremstyle{plain}
  \newtheorem{lem}[thm]{\protect\lemmaname}
  \theoremstyle{plain}
  \newtheorem{prop}[thm]{\protect\propositionname}
  \theoremstyle{plain}
  \newtheorem{cor}[thm]{\protect\corollaryname}
  \providecommand{\corollaryname}{Corollary}
  \providecommand{\definitionname}{Definition}
  \providecommand{\examplename}{Example}
  \providecommand{\lemmaname}{Lemma}
  \providecommand{\propositionname}{Proposition}
  \providecommand{\remarkname}{Remark}
\providecommand{\theoremname}{Theorem}
  \providecommand{\corollaryname}{Corollary}
  \providecommand{\definitionname}{Definition}
  \providecommand{\examplename}{Example}
  \providecommand{\lemmaname}{Lemma}
  \providecommand{\propositionname}{Proposition}
  \providecommand{\remarkname}{Remark}
\providecommand{\theoremname}{Theorem}
\begin{document}

\title{Generalized solutions in PDE's and the Burgers' equation}

\author{Vieri Benci\thanks{\textsc{V. Benci, Dipartimento di Matematica, Università degli Studi
di Pisa, Via F. Buonarroti 1/c, 56127 Pisa, ITALY and Centro Linceo
Interdisciplinare Beniamino Segre, Palazzo Corsini - Via della Lungara
10, 00165 Roma, ITALY,}\texttt{ vieri.benci@unipi.it}} \and Lorenzo Luperi Baglini\thanks{\textsc{L. Luperi Baglini, Faculty of Mathematics, University of Vienna,
Austria, Oskar-Morgenstern-Platz 1, 1090 Wien, AUSTRIA,}\texttt{ lorenzo.luperi.baglini@univie.ac.at}}\thanks{L.~Luperi Baglini has been supported by grants P25311-N25 and M1876-N35
of the Austrian Science Fund FWF.}}
\maketitle
\begin{abstract}
In many situations, the notion of function is not sufficient and it
needs to be extended. A classical way to do this is to introduce the
notion of weak solution; another approach is to use generalized functions.
Ultrafunctions are a particular class of generalized functions that
has been previously introduced and used to define generalized solutions
of stationary problems in \cite{ultra,belu2012,milano,beyond,gauss}.
In this paper we generalize this notion in order to study also evolution
problems. In particular, we introduce the notion of Generalized Ultrafunction
Solution (GUS) for a large family of PDE's, and we confront it with
classical strong and weak solutions. Moreover, we prove an existence
and uniqueness result of GUS for a large family of PDE's, including
the nonlinear Schroedinger equation and the nonlinear wave equation.
Finally, we study in detail GUS of Burgers' equation, proving that
(in a precise sense) the GUS of this equation provides a description
of the phenomenon at microscopic level.
\end{abstract}
\tableofcontents{}

\section{Introduction}

In order to solve many problems of mathematical physics, the notion
of function is not sufficient and it is necessary to extend it. Among
people working in partial differential equations, the theory of distributions
of Schwartz and the notion of weak solution are the main tools to
be used when equations do not have classical solutions. Usually, these
equations do not have classical solutions since they develop singularities.
The notion of weak solution allows to obtain existence results, but
uniqueness may be lost; also, these solutions might violate the conservation
laws. As an example let us consider the Burgers' equation: 
\begin{equation}
\frac{\partial u}{\partial t}+u\frac{\partial u}{\partial x}=0.\tag{BE}\label{BE}
\end{equation}
A local classical solution $u(t,x)$ is unique and, if it has compact
support, it preserves the momentum $P=\int u\ dx$ and the energy
$E=\frac{1}{2}\int u^{2}\ dx$ as well as other quantites. However,
at some time a singularity appears and the solution can be no longer
described by a smooth function. The notion of weak solution is necessary,
but the problem of uniqueness becomes a central issue. Moreover, in
general, $E$ is not preserved.

An approach that can be used to try to overcome these difficulties
is the use of generalized functions (see e.g. \cite{biagioni,biagioni-MO,ros},
where such an approach is developed using ideas in common with Colombeau
theory). In this paper we use a similar approach by means of non-Archimedean
analysis, and we introduce the notion of ultrafunction solution for
a large family of PDE's using some of the tools of Nonstandard Analysis
(NSA). Ultrafunctions are a family of generalized functions defined
on the field of hyperreals, which are a well known extension of the
reals. They have been introduced in \cite{ultra}, and also studied
in \cite{belu2012,belu2013,algebra,beyond,gauss,topologia}. The non-Archimedean
setting in which we will work (which is a reformulation, in a topological
language, of the ultrapower approach to NSA of Keisler) is introduced
in Section \ref{lt}. In Section \ref{ul} we introduce the spaces
of ultrafunctions, and we show their relationships with distributions.
In Section \ref{gus} we introduce the notion of generalized ultrafunction
solutions (GUS). We prove an existence and uniqueness theorem for
these generalized solutions, and we confront them with strong and
weak solutions of evolution problems. In particular, we show the existence
of a GUS even in the presence of blow ups (as e.g. in the case of
the nonlinear Schroedinger equation), and we show the uniqueness of
GUS for the nonlinear wave equation. Finally, in Section \ref{tbe}
we study in detail Burgers' equation and, in a sense precised in Section
\ref{sub:The-microscopic-part}, we show that in this case the unique
GUS of this equation provides a description of the phenomenon at microscopic
level.

\subsection{Notations\label{not}}

Let $\Omega$\ be a subset of $\mathbb{R}^{N}$: then 
\begin{itemize}
\item $\mathcal{C}\left(\Omega\right)$ denotes the set of continuous functions
defined on $\Omega\subset\mathbb{R}^{N};$ 
\item $\mathcal{C}_{c}\left(\Omega\right)$ denotes the set of continuous
functions in $\mathcal{C}\left(\Omega\right)$ having compact support
in $\Omega;$ 
\item $\mathcal{C}_{0}\left(\overline{\Omega}\right)$ denotes the set of
continuous functions in $\mathcal{C}\left(\Omega\right)$ which vanish
on $\partial\Omega;$ 
\item $\mathcal{C}^{k}\left(\Omega\right)$ denotes the set of functions
defined on $\Omega\subset\mathbb{R}^{N}$ which have continuous derivatives
up to the order $k;$ 
\item $\mathcal{C}_{c}^{k}\left(\Omega\right)$ denotes the set of functions
in $\mathcal{C}^{k}\left(\Omega\right)\ $having compact support; 
\item $\mathscr{D}\left(\Omega\right)$ denotes the set of the infinitely
differentiable functions with compact support defined on $\Omega\subset\mathbb{R}^{N};\ \mathcal{\mathscr{D}}^{\prime}\left(\Omega\right)$
denotes the topological dual of $\mathscr{D}\left(\Omega\right)$,
namely the set of distributions on $\Omega;$ 
\item for any set $X$, $\mathcal{P}_{fin}(X)$ denotes the set of finite
subsets of $X$; 
\item if $W$ is a generic function space, its topological dual will be
denated by $W^{\prime}$ and the paring by $\left\langle \cdot,\cdot\right\rangle _{W}$,
or simply by $\left\langle \cdot,\cdot\right\rangle .$ 
\end{itemize}

\section{$\Lambda$-theory\label{lt}}

\subsection{Non-Archimedean Fields\label{naf}}

In this section we recall the basic definitions and facts regarding
non-Archimedean fields, following an approach that has been introduced
in \cite{topologia} (see also \cite{ultra,BDN2003,belu2012,belu2013,milano,algebra,beyond,gauss}).
In the following, ${\mathbb{K}}$ will denote an ordered field. We
recall that such a field contains (a copy of) the rational numbers.
Its elements will be called numbers. 
\begin{defn}
Let $\mathbb{K}$ be an infinite ordered field. Let $\xi\in\mathbb{K}$.
We say that:
\begin{itemize}
\item $\xi$ is infinitesimal if, for all positive $n\in\mathbb{N}$, $|\xi|<\frac{1}{n}$; 
\item $\xi$ is finite if there exists $n\in\mathbb{N}$ such that $|\xi|<n$; 
\item $\xi$ is infinite if, for all $n\in\mathbb{N}$, $|\xi|>n$ (equivalently,
if $\xi$ is not finite). 
\end{itemize}
An ordered field $\mathbb{K}$ is called non-Archimedean if it contains
an infinitesimal $\xi\neq0$. 
\end{defn}
It's easily seen that all infinitesimal are finite, that the inverse
of an infinite number is a nonzero infinitesimal number, and that
the inverse of a nonzero infinitesimal number is infinite. 
\begin{defn}
A superreal field is an ordered field $\mathbb{K}$ that properly
extends $\mathbb{R}$. 
\end{defn}
It is easy to show, due to the completeness of $\mathbb{R}$, that
there are nonzero infinitesimal numbers and infinite numbers in any
superreal field. Infinitesimal numbers can be used to formalize a
notion of \textquotedbl{}closeness\textquotedbl{}: 
\begin{defn}
\label{def infinite closeness} We say that two numbers $\xi,\zeta\in{\mathbb{K}}$
are infinitely close if $\xi-\zeta$ is infinitesimal. In this case
we write $\xi\sim\zeta$. 
\end{defn}
Clearly, the relation \textquotedbl{}$\sim$\textquotedbl{} of infinite
closeness is an equivalence relation. 
\begin{thm}
If $\mathbb{K}$ is a superreal field, every finite number $\xi\in\mathbb{K}$
is infinitely close to a unique real number $r\sim\xi$, called the
\textbf{shadow} or the \textbf{standard part} of $\xi$. 
\end{thm}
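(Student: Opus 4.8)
The plan is to construct the shadow via a Dedekind-type cut on $\mathbb{R}$, then verify it does the job, and finally argue uniqueness. First I would fix a finite $\xi \in \mathbb{K}$ and set
\[
A = \{ q \in \mathbb{Q} : q < \xi \}, \qquad B = \{ q \in \mathbb{Q} : q > \xi \}.
\]
Since $\xi$ is finite, there is $n \in \mathbb{N}$ with $-n < \xi < n$, so $A$ is nonempty (it contains $-n$) and bounded above (by $n$), while $B$ is nonempty and bounded below. Define $r = \sup A \in \mathbb{R}$, using the completeness of $\mathbb{R}$; this is the candidate standard part. Note $r = \inf B$ as well, since any rational strictly between $\sup A$ and $\inf B$ (if the two differed) would have to lie in neither $A$ nor $B$, forcing it to equal $\xi$, contradicting $\xi \notin \mathbb{Q}$ unless $\xi$ is itself rational — and in the rational case one checks directly that $r = \xi$.

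Next I would show $r \sim \xi$, i.e. $|\xi - r|$ is infinitesimal. Fix $n \in \mathbb{N}$, $n \geq 1$. By density of $\mathbb{Q}$ in $\mathbb{R}$ and the definition of $r$ as a supremum, choose rationals $p, q$ with $r - \tfrac{1}{2n} < p < r \le q < r + \tfrac{1}{2n}$, arranging moreover that $p \in A$ and $q \in B$ (possible: pick $p \in A$ close enough to $r$ from below using that $r = \sup A$, and $q \in B$ close enough to $r$ from above using that $r = \inf B$). Then $p < \xi < q$ in $\mathbb{K}$, hence $|\xi - r| < q - p < \tfrac{1}{n}$. Since $n$ was arbitrary, $|\xi - r| < \tfrac{1}{n}$ for all positive $n \in \mathbb{N}$, which is exactly the statement that $\xi - r$ is infinitesimal, i.e. $r \sim \xi$.

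Finally, uniqueness: if $r, r' \in \mathbb{R}$ both satisfy $r \sim \xi$ and $r' \sim \xi$, then by transitivity of $\sim$ (it is an equivalence relation, as already noted in the excerpt) we get $r \sim r'$, so $r - r'$ is an infinitesimal real number; but the only infinitesimal in $\mathbb{R}$ is $0$ (if $0 \neq |r - r'| < \tfrac{1}{n}$ for all $n$, pick $n$ with $\tfrac{1}{n} < |r-r'|$, a contradiction), so $r = r'$. The only genuinely delicate point is the coherence of the two one-sided constructions — ensuring $\sup A = \inf B$ and handling the edge case $\xi \in \mathbb{Q}$ — but this is routine once one invokes the density of $\mathbb{Q}$ in $\mathbb{R}$ together with the fact that $\xi$ lies strictly between the two sets. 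No deep machinery is needed beyond the completeness of $\mathbb{R}$.
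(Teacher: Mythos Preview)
The paper states this theorem without proof, treating it as a well-known background fact from the theory of ordered fields (and indeed it is the standard ``standard part'' theorem). Your Dedekind-cut argument is the classical proof and is correct in substance.

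One small wrinkle worth tightening: your justification that $\sup A = \inf B$ is phrased a bit awkwardly. A cleaner way is to note that if $\sup A < \inf B$ then there are infinitely many rationals strictly between them, and each such rational $q$ satisfies $q \notin A$ and $q \notin B$, forcing $q = \xi$ by trichotomy; but there is only one $\xi$, a contradiction. This disposes of the case split on whether $\xi \in \mathbb{Q}$ in one stroke. Once $r = \sup A = \inf B$ is established, your $\varepsilon$-argument for $|\xi - r| < \tfrac{1}{n}$ goes through exactly as written, and the uniqueness step (the only real infinitesimal is $0$) is fine.
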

Given a finite number $\xi$, we denote its shadow as $sh(\xi)$.

\subsection{The $\Lambda-$limit}

In this section we introduce a particular non-Archimedean field by
means of $\Lambda-$theory\footnote{Readers expert in nonstandard analysis will recognize that $\Lambda$-theory
is equivalent to the superstructure constructions of Keisler (see
\cite{keisler76} for a presentation of the original constructions
of Keisler, and \cite{topologia} for a comparison between these two
approaches to nonstandard analysis). }, in particular by means of the notion of $\Lambda-$limit (for complete
proofs and for further informations the reader is referred to \cite{benci99},
\cite{ultra}, \cite{belu2012} and \cite{topologia}). To recall
the basics of $\Lambda-$theory we have to recall the notion of superstructure
on a set (see also \cite{keisler76}): 
\begin{defn}
Let $E$ be an infinite set. The superstructure on $E$ is the set
\[
V_{\infty}(E)=\bigcup_{n\in\mathbb{N}}V_{n}(E),
\]
where the sets $V_{n}(E)$ are defined by induction by setting 
\[
V_{0}(E)=E
\]
and, for every $n\in\mathbb{N}$, 
\[
V_{n+1}(E)=V_{n}(E)\cup\mathcal{P}\left(V_{n}(E)\right).
\]

\end{defn}
Here $\mathcal{P}\left(E\right)$ denotes the power set of $E.$ Identifying
the couples with the Kuratowski pairs and the functions and the relations
with their graphs, it follows that $V_{\infty}(E)$ contains almost
every usual mathematical object that can be constructed starting with
$E;$ in particular, $V_{\infty}(\mathbb{R})$, which is the superstructure
that we will consider in the following, contains almost every usual
mathematical object of analysis.

Throughout this paper we let 
\[
\mathfrak{L}=\mathcal{P}_{fin}(V_{\infty}(\mathbb{R}))
\]
and we order $\mathfrak{L}$ via inclusion. Notice that $(\mathfrak{L},\subseteq)$
is a directed set. We add to $\mathfrak{L}$ a \textquotedbl{}point
at infinity\textquotedbl{} $\Lambda\notin\mathfrak{L}$, and we define
the following family of neighborhoods of $\Lambda:$ 
\[
\{\{\Lambda\}\cup Q\mid Q\in\mathcal{U}\},
\]
where $\mathcal{U}$ is a fine ultrafilter on $\mathfrak{L}$, namely
a filter such that 
\begin{itemize}
\item for every $A,B\subseteq\mathfrak{L}$, if $A\cup B=\mathfrak{L}$
then $A\in\mathcal{U}$ or $B\in\mathcal{U}$; 
\item for every $\lambda\in\mathfrak{L}$ the set $I_{\lambda}=\{\mu\in\mathfrak{L}\mid\lambda\subseteq\mu\}\in\mathcal{U}$. 
\end{itemize}
In particular, we will refer to the elements of $\mathcal{U}$ as
qualified sets and we will write $\Lambda=\Lambda(\mathcal{U})$ when
we want to highlight the choice of the ultrafilter. We are interested
in considering real nets with indices in $\mathfrak{L}$, namely functions
\[
\varphi:\mathfrak{L}\rightarrow\mathbb{R}\text{.}
\]

In particular, we are interested in $\Lambda-$limits of these nets,
namely in 
\[
\lim_{\lambda\rightarrow\Lambda}\varphi(\lambda).
\]

The following has been proved in \cite{topologia}. 
\begin{thm}
\label{nuovo}There exists a non-Archimedean superreal field $(\mathbb{K},+,\cdot,<)$
and an Hausdorff topology $\tau$ on the space $\left(\mathfrak{L}\times\mathbb{R}\right)\cup\mathbb{K}$
such that 
\begin{enumerate}
\item \label{tre}$\left(\mathfrak{L}\times\mathbb{R}\right)\cup\mathbb{K}=cl_{\tau}\left(\mathfrak{L}\times\mathbb{R}\right);$ 
\item \label{uno}for every net $\varphi:\mathfrak{L}\rightarrow\mathbb{R}$
the limit 
\[
L=\lim_{\lambda\rightarrow\Lambda}(\lambda,\varphi(\lambda))
\]
exists, it is in $\mathbb{K}$ and it is unique; moreover for every
$\xi\in\mathbb{K}$ there is a net $\varphi:\mathfrak{L}\rightarrow\mathbb{R}$
such that 
\[
\xi=\lim_{\lambda\rightarrow\Lambda}(\lambda,\varphi(\lambda));
\]

\item \label{due}$\forall$ $c\in\mathbb{R}$ we have that 
\[
\lim_{\lambda\rightarrow\Lambda}\left(\lambda,c\right)=c;
\]

\item \label{quattro}for every $\varphi,\psi:\mathfrak{L}\rightarrow\mathbb{R}$
we have that 
\begin{eqnarray*}
\lim_{\lambda\rightarrow\Lambda}\left(\lambda,\varphi(\lambda)\right)+\lim_{\lambda\rightarrow\Lambda}\left(\lambda,\psi(\lambda)\right) & = & \lim_{\lambda\rightarrow\Lambda}\left(\lambda,(\varphi+\psi)(\lambda)\right);\\
\lim_{\lambda\rightarrow\Lambda}\left(\lambda,\varphi(\lambda)\right)\cdot\lim_{\lambda\rightarrow\Lambda}\left(\lambda,\varphi(\lambda)\right) & = & \lim_{\lambda\rightarrow\Lambda}\left(\lambda,(\varphi\cdot\psi)(\lambda)\right).
\end{eqnarray*}

\end{enumerate}
\end{thm}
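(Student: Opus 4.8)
The plan is to take for $\mathbb{K}$ the ultrapower $\mathbb{R}^{\mathfrak{L}}/\mathcal{U}$ and to build $\tau$ as (essentially) the coarsest topology on $X:=(\mathfrak{L}\times\mathbb{R})\cup\mathbb{K}$, viewed as a disjoint union, in which every ``graph net'' $\lambda\mapsto(\lambda,\varphi(\lambda))$ converges to the $\mathcal{U}$-class of $\varphi$. Concretely, for $\varphi:\mathfrak{L}\to\mathbb{R}$ write $[\varphi]$ for its class, where $\varphi$ and $\psi$ are identified iff $\{\lambda:\varphi(\lambda)=\psi(\lambda)\}\in\mathcal{U}$. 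Since $\mathcal{U}$ is an ultrafilter, the standard argument shows that $\mathbb{K}$, equipped with the pointwise operations passed to the quotient and with $[\varphi]<[\psi]$ iff $\{\lambda:\varphi(\lambda)<\psi(\lambda)\}\in\mathcal{U}$, is an ordered field, into which $\mathbb{R}$ embeds (order-preservingly) via the constant nets. The field is non-Archimedean: the net $\lambda\mapsto\mathrm{card}(\lambda)$ represents an infinite element, because for each $n$ one can pick $\lambda_{0}\in\mathfrak{L}$ with $\mathrm{card}(\lambda_{0})>n$ and then $I_{\lambda_{0}}\subseteq\{\mu:\mathrm{card}(\mu)>n\}$, so $\{\mu:\mathrm{card}(\mu)>n\}\in\mathcal{U}$ by the fineness of $\mathcal{U}$. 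The same fineness (together with $V_{\infty}(\mathbb{R})$ being infinite) shows $\mathcal{U}$ is non-principal, i.e. $\{\lambda\}\notin\mathcal{U}$ for every $\lambda$; this small fact is used repeatedly below. In particular $\mathbb{K}$ is a superreal field.

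For the topology I would declare every point of $\mathfrak{L}\times\mathbb{R}$ isolated and take, as a neighbourhood base at a point $\xi\in\mathbb{K}$, the family of sets $N(\varphi,Q):=\{\xi\}\cup\{(\mu,\varphi(\mu)):\mu\in Q\}$, where $\varphi$ ranges over the nets with $[\varphi]=\xi$ and $Q$ over $\mathcal{U}$. To see this is a legitimate base one checks it is downward directed: if $[\varphi]=[\psi]=\xi$ then $E:=\{\mu:\varphi(\mu)=\psi(\mu)\}\in\mathcal{U}$, and $N(\varphi,Q_{1}\cap Q_{2}\cap E)\subseteq N(\varphi,Q_{1})\cap N(\psi,Q_{2})$. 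Hence each $N(\varphi,Q)$ is open, and the collection of all such sets together with the singletons of $\mathfrak{L}\times\mathbb{R}$ generates a topology $\tau$ on $X$.

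The four listed properties then follow quickly. Density (item 1) holds because every $Q\in\mathcal{U}$ is nonempty, so each $N(\varphi,Q)$ meets $\mathfrak{L}\times\mathbb{R}$. For item 2, fix $\varphi$: any neighbourhood of $[\varphi]$ contains some $N(\psi,Q)$ with $[\psi]=[\varphi]$, and on the qualified set $Q\cap\{\mu:\psi(\mu)=\varphi(\mu)\}$ the graph net $\lambda\mapsto(\lambda,\varphi(\lambda))$ takes values in $N(\psi,Q)$; thus $\lim_{\lambda\to\Lambda}(\lambda,\varphi(\lambda))=[\varphi]\in\mathbb{K}$. This limit cannot be an isolated point $(\lambda_{0},r_{0})$, since that would force the graph net to be $\mathcal{U}$-eventually equal to $(\lambda_{0},r_{0})$, i.e. $\{\lambda_{0}\}\in\mathcal{U}$, which fails; surjectivity onto $\mathbb{K}$ is built into the construction. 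Item 3 is immediate because $\lambda\mapsto c$ represents $c\in\mathbb{R}\subseteq\mathbb{K}$, and item 4 follows from $[\varphi]+[\psi]=[\varphi+\psi]$, $[\varphi]\cdot[\psi]=[\varphi\psi]$ combined with item 2.

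The step needing the most care — and which also yields the uniqueness asserted in item 2 — is Hausdorffness. Two isolated points are trivially separated. To separate $(\lambda_{0},r_{0})$ from $\xi=[\varphi]$, use $\{(\lambda_{0},r_{0})\}$ and $N(\varphi,Q\setminus\{\lambda_{0}\})$, which is admissible precisely because $\{\lambda_{0}\}\notin\mathcal{U}$. To separate $\xi=[\varphi]\neq\zeta=[\psi]$, put $D:=\{\mu:\varphi(\mu)\neq\psi(\mu)\}\in\mathcal{U}$ and observe $N(\varphi,D)\cap N(\psi,D)=\emptyset$: a common point other than $\xi,\zeta$ would be of the form $(\mu,\varphi(\mu))=(\nu,\psi(\nu))$ with $\mu,\nu\in D$, forcing $\mu=\nu$ and $\varphi(\mu)=\psi(\mu)$, contradicting $\mu\in D$. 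Once $X$ is Hausdorff, uniqueness of the $\Lambda$-limit is the usual consequence applied to the (proper, since $\emptyset\notin\mathcal{U}$) neighbourhood filter of $\Lambda$ on $\mathfrak{L}$. The only genuine work is this bookkeeping around non-principality and fineness of $\mathcal{U}$; all the algebra is inherited from the quotient field $\mathbb{R}^{\mathfrak{L}}/\mathcal{U}$, and the topological content is simply that the $\mathcal{U}$-class of $\varphi$ is exactly the point to which its graph net is forced to converge.
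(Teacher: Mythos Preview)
Your proposal is correct and follows essentially the same construction as the paper's own (sketch of a) proof: the paper also takes $\mathbb{K}=\mathfrak{F}(\mathfrak{L},\mathbb{R})/I$ with $I$ the ideal of nets vanishing on a qualified set (i.e.\ the ultrapower $\mathbb{R}^{\mathfrak{L}}/\mathcal{U}$), and generates $\tau$ from the same basic neighbourhoods $N_{\varphi,Q}=\{(\lambda,\varphi(\lambda)):\lambda\in Q\}\cup\{[\varphi]\}$ together with all subsets of $\mathfrak{L}\times\mathbb{R}$. You in fact supply more detail than the paper, which defers the verification of Hausdorffness and of items~(1)--(4) to an external reference; your explicit checks that the $N(\varphi,Q)$ form a neighbourhood base, that non-principality of $\mathcal{U}$ follows from fineness, and that distinct classes are separated via $D=\{\mu:\varphi(\mu)\neq\psi(\mu)\}$ are exactly the missing routine steps.
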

\begin{proof}
For a complete proof of Theorem \ref{nuovo} we refer to \cite{topologia}.
The idea\footnote{To work, this idea needs some additional requirement on the ultrafilter
$\mathcal{U}$, see e.g. \cite{BDN200301}, \cite{topologia}.} is to set 
\[
I=\left\{ \varphi\in\mathfrak{F}\left(\mathfrak{L},\mathbb{R}\right)\ |\ \varphi(x)=0\ \text{in a qualified set}\right\} ;
\]
it is not difficult to prove that $I$ is a maximal ideal in $\mathfrak{F}\left(\mathfrak{L},\mathbb{R}\right),$
and hence 
\[
\mathbb{K}:=\frac{\mathfrak{F}\left(\mathfrak{L},\mathbb{R}\right)}{I}
\]
is a field. Now the claims of Theorem \ref{nuovo} follows by identifying
every real number $c\in\mathbb{R}$ with the equivalence class of
the constant net $\left[c\right]_{I}$ and by taking the topology
$\tau$ generated by the basis of open sets 
\[
b(\tau)=\left\{ N_{\varphi,Q}\ |\ \varphi\in\mathfrak{F}\left(\mathfrak{L},\mathbb{R}\right),Q\in\mathcal{U}\right\} \cup\mathcal{P}(\mathfrak{L}\times\mathbb{R}),
\]
where 
\[
N_{\varphi,Q}:=\left\{ \left(\lambda,\varphi(\lambda)\right)\mid\lambda\in Q\right\} \cup\left\{ \left[\varphi\right]_{I}\right\} 
\]
is a neighborhood of $\left[\varphi\right]_{I}$. 
\end{proof}
Now we want to define the $\Lambda$-limit of nets $(\lambda,\varphi(\lambda))_{\lambda\in\mathfrak{L}}$,
where $\varphi(\lambda)$ is any bounded net of mathematical objects
in $V_{\infty}(\mathbb{R)}$ (a net $\varphi:\mathfrak{L}\rightarrow V_{\infty}(\mathbb{R)}$
is called bounded if there exists $n$ such that $\forall\lambda\in\mathfrak{L},\ \varphi(\lambda)\in V_{n}(\mathbb{R)}$).
To this aim, let us consider a net 
\begin{equation}
\varphi:\mathfrak{L}\rightarrow V_{n}(\mathbb{R}).\label{net}
\end{equation}
We will define $\lim_{\lambda\rightarrow\Lambda}\ \left(\lambda,\varphi(\lambda)\right)$
by induction on $n$. 
\begin{defn}
\label{def}For $n=0,$ $\lim\limits _{\lambda\rightarrow\Lambda}(\lambda,\varphi(\lambda))$
exists by Thm. (\ref{nuovo}); so by induction we may assume that
the limit is defined for $n-1$ and we define it for the net (\ref{net})
as follows: 
\[
\lim_{\lambda\rightarrow\Lambda}\ \left(\lambda,\varphi(\lambda)\right)=\left\{ \lim\limits _{\lambda\rightarrow\Lambda}(\lambda,\psi(\lambda))\ |\ \psi:\mathfrak{L}\rightarrow V_{n-1}(\mathbb{R)}\text{ and}\ \forall\lambda\in\mathfrak{L},\ \psi(\lambda)\in\varphi(\lambda)\right\} .
\]

\end{defn}
From now on, we set 
\[
\lim\limits _{\lambda\uparrow\Lambda}\varphi(\lambda):=\lim_{\lambda\rightarrow\Lambda}\left(\lambda,\varphi(\lambda)\right).
\]
Notice that it follows from Definition \ref{def} that $\lim\limits _{\lambda\uparrow\Lambda}\varphi(\lambda)$
is a well defined object in $V_{\infty}(\mathbb{R}^{\ast})$ for every
bounded net $\varphi:\mathfrak{L}\rightarrow V_{\infty}(\mathbb{R})$.

\subsection{Natural extension of sets and functions}

In this section we want to show how to extend subsets and functions
defined on $\mathbb{R}$ to subsets and functions defined on $\mathbb{K}$. 
\begin{defn}
\label{janez}Given \textit{\emph{a set}}\textit{ }$E\subseteq\mathbb{R}$,
we set 
\[
E^{\ast}:=\left\{ \lim_{\lambda\uparrow\Lambda}\psi(\lambda)\ |\ \forall\lambda\in\mathfrak{L}\,\psi(\lambda)\in E\right\} .
\]
$E^{\ast}$ is called the \textbf{natural extension }of $E.$ 
\end{defn}
Thus $E^{\ast}$ is the set of all the limits of nets with values
in $E$. Following the notation introduced in Def. \ref{janez}, from
now on we will denote $\mathbb{K}$ by $\mathbb{R}^{\ast}.$ Similarly,
it is possible to extend functions. 
\begin{defn}
\label{extfun}Given a function 
\[
f:A\rightarrow B
\]
we call natural extension of $f$ the function 
\[
f^{\ast}:A^{\ast}\rightarrow B^{\ast}
\]
such that 
\[
f^{\ast}\left(\lim_{\lambda\rightarrow\Lambda}\left(\lambda,\varphi(\lambda)\right)\right):=\lim_{\lambda\rightarrow\Lambda}\left(\lambda,f\left(\varphi(\lambda)\right)\right)
\]
for every $\varphi:\mathfrak{L}\rightarrow A.$ 
\end{defn}
That Definition \ref{extfun} is well posed has been proved in \cite{topologia}.
Let us observe that, in particular, $f^{\ast}(a)=f(a)$ for every
$a\in A$ (which is why $f^{\ast}$ is called the extension of $f$).

\section{Ultrafunctions\label{ul}}

\subsection{Definition of Ultrafunctions}

We follow the construction of ultrafunctions that we introduced in
\cite{gauss}. Let $N$ be a natural number, let $\Omega$ be a subset
of $\mathbb{R}^{N}$ and let $V\subset\mathfrak{F}\left(\Omega,\mathbb{R}\right)$
be a function vector space such that $\mathcal{\mathscr{D}}(\Omega)\subseteq V(\Omega)\subseteq L^{2}(\Omega).$ 
\begin{defn}
\label{approxseq} We say that $(V_{\lambda}(\Omega)_{\lambda\in\mathfrak{L}})$
is an \textbf{approximating net for} $V(\Omega)$ if\end{defn}
\begin{enumerate}
\item $V_{\lambda}(\Omega)$ is a finite dimensional vector subspace of
$V(\Omega)$ for every $\lambda\in\mathfrak{L}$; 
\item if $\lambda_{1}\subseteq\lambda_{2}$ then $V_{\lambda_{1}}(\Omega)\subseteq V_{\lambda_{2}}(\Omega)$; 
\item \label{unioni} if $W(\Omega)\subset V(\Omega)$ is a finite dimensional
vector space then there exists $\lambda\in\mathfrak{L}$ such that
$W(\Omega)\subseteq V_{\lambda}(\Omega)$ (i.e., $V(\Omega)=\bigcup\limits _{\lambda\in\mathfrak{L}}V_{\lambda}(\Omega)$). 
\end{enumerate}
Let us show two examples. 
\begin{example}
Let $V(\mathbb{R})\subseteq L^{2}(\mathbb{R})$. We set, for every
$\lambda\in\mathfrak{L}$,

\[
V_{\lambda}(\Omega):=Span(V(\Omega)\cap\lambda).
\]
Then $(V_{\lambda}(\Omega))_{\lambda\in\mathfrak{L}}$ is an approximating
net for $V(\Omega)$. 
\end{example}

\begin{example}
Let 
\[
\left\{ e_{a}\right\} _{a\in\mathbb{R}}
\]
be a Hamel basis\footnote{We recall that $\left\{ e_{a}\right\} _{a\in\mathbb{R}}$ is a Hamel
basis for $W$ if $\left\{ e_{a}\right\} _{a\in\mathbb{R}}$ is a
set of linearly indipendent elements of $W$ and every element of
$W$ can be (uniquely) written has a finite sum (with coefficients
in $\mathbb{R}$) of elements of $\left\{ e_{a}\right\} _{a\in\mathbb{R}}.$
Since a Hamel basis of $W$ has the continuum cardinality we can use
the points of $\mathbb{R}$ as indices for this basis.

{}} of $V(\Omega)\subseteq L^{2}$. For every $\lambda\in\mathfrak{L}$
let 
\[
V_{\lambda}(\Omega)=Span\left\{ e_{a}\ |\ a\in\lambda\right\} .
\]
Then $(V_{\lambda}(\Omega))_{\lambda\in\mathfrak{L}}$ is an approximating
net for $V(\Omega).$ \end{example}
\begin{defn}
Let $\mathcal{U}$ be a fine ultrafilter on $\mathfrak{L}$, let $\Lambda=\Lambda(\mathcal{U})$
and let $(V_{\lambda}(\Omega))_{\lambda\in\mathfrak{L}}$ be an approximating
net for $V(\Omega)$. We call \textbf{space of ultrafunctions} \textbf{generated
by }$(V_{\lambda}(\Omega))$ the $\Lambda$-limit 
\[
V_{\Lambda}(\Omega):=\lim_{\lambda\uparrow\Lambda}V_{\lambda}(\Omega)=\left\{ \lim_{\lambda\uparrow\Lambda}f_{\lambda}\ |\ \forall\lambda\in\mathfrak{L~}f_{\lambda}\in V_{\lambda}(\Omega)\right\} .
\]
In this case we will also say that the space $V_{\Lambda\text{ }}(\Omega)$
is based on the space $V(\Omega)$. When $V_{\lambda}(\Omega):=Span(V(\Omega)\cap\lambda)$
for every $\lambda\in\mathfrak{L}$, we will say that $V_{\Lambda}(\Omega)$
is a \textbf{canonical space of ultrafunctions}. 
\end{defn}
Using the above definition, if $V(\Omega)$, $\Omega\subset\mathbb{R}^{N}$,
is a real function space and $(V_{\lambda}(\Omega))$ is an approximating
net for $V(\Omega)$ then we can associate to $V(\Omega)$ the following
three hyperreal functions spaces: 
\begin{equation}
V(\Omega)^{\sigma}=\left\{ f^{\ast}\ |\ f\in V(\Omega)\right\} ;\label{sigma1}
\end{equation}
\begin{equation}
V_{\Lambda}(\Omega)=\left\{ \lim_{\lambda\uparrow\Lambda}\ f_{\lambda}\ |\ \forall\lambda\in\mathfrak{L~}f_{\lambda}\in V_{\lambda}(\Omega)\right\} ;\label{tilda}
\end{equation}

\begin{equation}
V(\Omega)^{\ast}=\left\{ \lim_{\lambda\uparrow\Lambda}\ f_{\lambda}\ |\ \forall\lambda\in\mathfrak{L~}f_{\lambda}\in V(\Omega)\right\} .\label{star}
\end{equation}
Clearly we have 
\[
V(\Omega)^{\sigma}\subset V_{\Lambda}(\Omega)\subset V(\Omega)^{\ast}.
\]

So, given any vector space of functions $V(\Omega)$, the space of
ultrafunctions generated by $V(\Omega)$ is a vector space of hyperfinite
dimension that includes $V(\Omega)^{\sigma}$, and the ultrafunctions
are $\Lambda$-limits of functions in $V_{\lambda}(\Omega)$. Hence
the ultrafunctions are particular internal functions 
\[
u:\left(\mathbb{R}^{\ast}\right)^{N}\rightarrow{\mathbb{C}^{\ast}.}
\]
Since $V_{\Lambda}(\Omega)\subset\left[L^{2}(\mathbb{R})\right]^{\ast},$
we can equip $V_{\Lambda}(\Omega)$ with the following scalar product:
\begin{equation}
\left(u,v\right)=\int^{\ast}u(x)v(x)\ dx,\label{inner}
\end{equation}
where $\int^{\ast}$ is the natural extension of the Lebesgue integral
considered as a functional 
\[
\int:L^{1}(\Omega)\rightarrow{\mathbb{R}}.
\]
Therefore, the norm of an ultrafunction will be given by 
\[
\left\Vert u\right\Vert =\left(\int^{\ast}|u(x)|^{2}\ dx\right)^{\frac{1}{2}}.
\]
Sometimes, when no ambiguity is possible, in order to make the notation
simpler we will write $\int$ istead of $\int^{\ast}$. 
\begin{rem}
\label{nina}Notice that the natural extension $f^{\ast}$ of a function
$f$ is an ultrafunction if and only if $f\in V(\Omega).$ \end{rem}
\begin{proof}
Let $f\in V(\Omega)$ and let $(V_{\lambda}(\Omega))$ be an approximating
net for $V(\Omega)$. Then, eventually, $f\in V_{\lambda}(\Omega)$
and hence 
\[
f^{\ast}=\lim_{\lambda\uparrow\Lambda}f\in\lim_{\lambda\uparrow\Lambda}\ V_{\lambda}(\Omega)=V_{\Lambda}(\Omega).
\]
Conversely, if $f\notin V(\Omega)$ then $f^{\ast}\notin V^{\ast}(\Omega)$
and, since $V_{\Lambda}(\Omega)\subset V^{\ast}(\Omega)$, this entails
the thesis. 
\end{proof}

\subsection{Canonical extension of functions, functionals and operators}

Let $V_{\Lambda}(\Omega)$ be a space of ultrafunctions based on $V(\Omega)\subseteq L^{2}(\Omega)$.
We have seen that given a function $f\in V(\Omega),$ its natural
extension 
\[
f^{\ast}:\Omega^{\ast}\rightarrow\mathbb{R}^{\ast}
\]
is an ultrafunction in $V_{\Lambda}(\Omega).$ In this section we
investigate the possibility to associate an ultrafunction $\widetilde{f}$
to any function $f\in L_{loc}^{1}(\Omega)$ in a consistent way. Since
$L^{2}(\Omega)\subseteq V^{\prime}(\Omega)$, this association can
be done by means of a duality method. 
\begin{defn}
\label{CP}Given $T\in\left[L^{2}(\Omega)\right]^{\ast},$ we denote
by $\widetilde{T}$ the unique ultrafunction such that $\forall v\in V_{\Lambda}(\Omega),$
\[
\int_{\Omega^{\ast}}\widetilde{T}(x)v(x)\,dx=\int_{\Omega^{\ast}}T(x)v(x)\,dx.
\]
The map 
\[
P_{\Lambda}:\left[L^{2}(\Omega)\right]^{\ast}\rightarrow V_{\Lambda}(\Omega)
\]
defined by $P_{\Lambda}T=\widetilde{T}$ will be called the \textbf{canonical
projection.} 
\end{defn}
The above definition makes sense, as $T$ is a linear functional on
$V(\Omega)^{\ast}$, and hence on $V_{\Lambda}(\Omega)\subset V(\Omega)^{\ast}.$

Since $V(\Omega)\subset L^{2}(\Omega),$ using the inner product (\ref{inner})
we can identify $L^{2}(\Omega)$ with a subset of $V^{\prime}(\Omega),$
and hence $\left[L^{2}(\Omega)\right]^{\ast}$ with a subset of $\left[V^{\prime}(\Omega)\right]^{\ast};$
in this case, $\forall f\in\left[L^{2}(\Omega)\right]^{\ast},\ \forall v\in V_{\Lambda}(\Omega),$
\[
\int\widetilde{f}(x)v(x)\ dx=\int f(x)v(x)\ dx,
\]
namely the map $P_{\Lambda}f=\widetilde{f}$ restricted to $\left[L^{2}(\Omega)\right]^{\ast}$
reduces to the orthogonal projection 
\[
P_{\Lambda}:\left[L^{2}(\Omega)\right]^{\ast}\rightarrow V_{\Lambda}(\Omega).
\]
If we take any function $f\in L_{loc}^{1}(\Omega)\cap L^{2}(\Omega),$
then $f^{\ast}\in\left[L_{loc}^{1}(\Omega)\cap L^{2}(\Omega)\right]^{\ast}\subset\left[L^{2}(\Omega)\right]^{\ast}$
and hence $\widetilde{f^{\ast}}$ is well defined by Def.~\ref{CP}.
In order to simplify the notation we will simply write $\widetilde{f.}$
This discussion suggests the following definition: 
\begin{defn}
Given a function $f\in L_{loc}^{1}(\Omega)\cap L^{2}(\Omega),$ we
denote by $\widetilde{f}$ the unique ultrafunction in $V_{\Lambda}(\Omega)$
such that $\forall v\in V_{\Lambda}(\Omega),$ 
\[
\int\widetilde{f}(x)v(x)\ dx=\int f^{\ast}(x)v(x)\ dx.
\]
$\widetilde{f}$ is called the canonical extension of $f.$ \end{defn}
\begin{rem}
\label{cina}As we observed, for every $f:\mathbb{R}\rightarrow\mathbb{R}$
we have that $^{\ast}f\in V_{\Lambda}(\Omega)$ iff $f\in V(\Omega).$
Therefore for every $f:\mathbb{R}\rightarrow\mathbb{R}$ 
\[
\widetilde{f}=f^{\ast}\Leftrightarrow f\in V(\Omega).
\]

\end{rem}
Let us observe that we need to assume that $V(\Omega)\subset L_{c}^{\infty}(\Omega)=(L_{loc}^{1}(\Omega))^{\prime}$
if we want $\widetilde{f}$ to be defined for every function $f\in L_{loc}^{1}(\Omega)$.
Using a similar method, it is also possible to extend operators: 
\begin{defn}
\label{b}Given an operator 
\[
\mathcal{A}:V(\Omega)\rightarrow V^{\prime}(\Omega)
\]
we can extend it to an operator 
\[
\widetilde{\mathcal{A}}:V_{\Lambda}(\Omega)\rightarrow V_{\Lambda}(\Omega)
\]
in the following way: given an ultrafunction $u,$ $\widetilde{\mathcal{A}}(u)$
is the unique ultrafunction such that 
\[
\forall v\in V_{\Lambda}(\Omega),\ \int^{\ast}\widetilde{\mathcal{A}}(u)v\ dx\ =\int^{\ast}\mathcal{A}^{\ast}(u)v\,dx;
\]
namely 
\[
\widetilde{\mathcal{A}}=P_{\Lambda}\circ\mathcal{A}^{\ast},
\]
where $P_{\Lambda}$ is the canonical projection. 
\end{defn}
Sometimes, when no ambiguity is possible, in order to make the notation
simpler we will write $\mathcal{A}(u)$ instead of $\widetilde{\mathcal{A}}(u).$ 
\begin{example}
The derivative of an ultrafunction is well defined provided that the
weak derivative is defined from $V(\Omega)$ to his dual $V^{\prime}(\Omega):$
\[
\mathcal{\partial}:V(\Omega)\rightarrow V^{\prime}(\Omega).
\]
For example you can take $V(\Omega)=\mathcal{C}^{1}(\Omega),$ $H^{1/2}(\Omega),$
$BV(\Omega)$ etc. Following Definition \ref{b}, we have that the
ultrafunction derivative 
\[
D:V_{\Lambda}(\Omega)\rightarrow V_{\Lambda}(\Omega)
\]
of an ultrafunction $u$ is defined by duality as the unique ultrafunction
$Du$ such that 
\begin{equation}
\forall v\in V_{\Lambda}(\Omega),\ \int Du\ v\ dx\ =\left\langle \partial^{\ast}u,v\right\rangle .\label{sd}
\end{equation}
Notice that, in order to simplify the notation, we have denoted the
generalized derivative by $D=\widetilde{\mathcal{\partial}}.$ 
\end{example}
To construct the space of ultrafunctions that we need to study Burgers'
Equation we will use the following theorem: 
\begin{thm}
\label{thm:Spaces of Ultrafunctions as hyperfinite extensions}Let
$n\in\mathbb{N},$ $\Omega\subseteq\mathbb{R}^{n}$ and let $V(\Omega)$
be a vector space of functions. Let $V(\Omega)^{\ast}$ be a $\left|\mathfrak{L}\right|^{+}$-enlarged\footnote{For the notion of enlarging, as well as for other important notions
in nonstandard analysis such as saturation and overspill, we refer
to \cite{keisler76,rob}.} ultrapower of $V(\Omega)$. Then every hyperfinite dimensional vector
space $W(\Omega)$ such that $V(\Omega)^{\sigma}\subseteq W(\Omega)\subseteq V(\Omega)^{\ast}$
contains an isomorphic copy of a canonical space of ultrafunctions
on $V(\Omega)$.\end{thm}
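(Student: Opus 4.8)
The plan is to reduce the statement to a single ``extraction'' step performed inside $W(\Omega)$ by saturation. First I would record two facts about canonical spaces of ultrafunctions. (i) Since $Span$ commutes with natural extensions and with $\Lambda$-limits (the latter by Theorem~\ref{nuovo}), for any fine ultrafilter $\mathcal{U}$ on $\mathfrak{L}$, writing $\Lambda=\Lambda(\mathcal{U})$ and $\Lambda_{0}:=\lim_{\lambda\uparrow\Lambda}\lambda\in\mathfrak{L}^{\ast}$, one has
\[
V_{\Lambda}(\Omega)=\lim_{\lambda\uparrow\Lambda}Span\bigl(V(\Omega)\cap\lambda\bigr)=Span_{\mathbb{R}^{\ast}}\bigl(V(\Omega)^{\ast}\cap\Lambda_{0}\bigr),
\]
where $\Lambda_{0}$ is a hyperfinite set with $\lambda\subseteq\Lambda_{0}$ for every $\lambda\in\mathfrak{L}$; in particular $V(\Omega)^{\sigma}\subseteq V(\Omega)^{\ast}\cap\Lambda_{0}$. (ii) Conversely, any $\Lambda_{0}\in\mathfrak{L}^{\ast}$ with $\lambda\subseteq\Lambda_{0}$ for all $\lambda\in\mathfrak{L}$ produces a fine ultrafilter $\mathcal{U}(\Lambda_{0}):=\{Q\subseteq\mathfrak{L}\mid\Lambda_{0}\in Q^{\ast}\}$ on $\mathfrak{L}$, and $Span_{\mathbb{R}^{\ast}}\bigl(V(\Omega)^{\ast}\cap\Lambda_{0}\bigr)$ is an isomorphic copy of the canonical space of ultrafunctions attached to $\mathcal{U}(\Lambda_{0})$ --- the word ``isomorphic'' accounting for the standard embedding, compatible with $V(\Omega)^{\sigma}$, of the $\mathfrak{L}$-indexed ultrapower by $\mathcal{U}(\Lambda_{0})$ into the given $|\mathfrak{L}|^{+}$-enlarged ultrapower. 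By (i)--(ii) it suffices to produce $\Lambda_{0}\in\mathfrak{L}^{\ast}$ with $\lambda\subseteq\Lambda_{0}$ for all $\lambda\in\mathfrak{L}$ and $Span_{\mathbb{R}^{\ast}}\bigl(V(\Omega)^{\ast}\cap\Lambda_{0}\bigr)\subseteq W(\Omega)$.

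Second, I would carry out the extraction. Put
\[
\mathcal{S}:=\bigl\{\nu\in\mathfrak{L}^{\ast}\ \big|\ Span_{\mathbb{R}^{\ast}}\bigl(V(\Omega)^{\ast}\cap\nu\bigr)\subseteq W(\Omega)\bigr\},
\]
an internal set. It contains every standard $\lambda\in\mathfrak{L}$, since $V(\Omega)^{\ast}\cap\lambda\subseteq V(\Omega)^{\sigma}\subseteq W(\Omega)$ and $W(\Omega)$ is a vector space; and it is directed by inclusion, because $V(\Omega)^{\ast}\cap(\nu_{1}\cup\nu_{2})=\bigl(V(\Omega)^{\ast}\cap\nu_{1}\bigr)\cup\bigl(V(\Omega)^{\ast}\cap\nu_{2}\bigr)$ yields $Span_{\mathbb{R}^{\ast}}\bigl(V(\Omega)^{\ast}\cap(\nu_{1}\cup\nu_{2})\bigr)\subseteq W(\Omega)+W(\Omega)=W(\Omega)$. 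Hence the internal sets $\{\nu\in\mathcal{S}\mid\lambda\subseteq\nu\}$, for $\lambda$ ranging over $\mathfrak{L}$, form a family of at most $|\mathfrak{L}|$ internal sets with the finite intersection property (a finite subfamily is met by $\lambda_{1}\cup\cdots\cup\lambda_{k}\in\mathcal{S}$). By $|\mathfrak{L}|^{+}$-saturation of $V(\Omega)^{\ast}$, which is what the enlarging hypothesis is there to supply, the intersection is nonempty, and any $\Lambda_{0}$ in it does the job.

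Finally I would set $W'(\Omega):=Span_{\mathbb{R}^{\ast}}\bigl(V(\Omega)^{\ast}\cap\Lambda_{0}\bigr)$: it is a subspace of $W(\Omega)$ by the choice of $\Lambda_{0}$, it contains $V(\Omega)^{\sigma}$, and by (ii) it is an isomorphic copy of a canonical space of ultrafunctions on $V(\Omega)$, which proves the theorem. The crux is the extraction: $\Lambda_{0}$ must be \emph{large} enough to dominate all standard $\lambda$ --- so that $\mathcal{U}(\Lambda_{0})$ is fine and $V(\Omega)^{\sigma}$ is captured --- yet \emph{small} enough that $Span_{\mathbb{R}^{\ast}}(V(\Omega)^{\ast}\cap\Lambda_{0})$ stays inside the prescribed hyperfinite space $W(\Omega)$, and it is exactly the directedness of $\mathcal{S}$ together with the saturation/enlargement strength of $V(\Omega)^{\ast}$ that reconciles the two demands. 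A secondary, bookkeeping-type point is making the identification in (ii) precise, i.e. matching the ambient enlarged ultrapower with the $\Lambda$-theory universe in which ``canonical space of ultrafunctions'' is literally defined; this is routine given the universality of $|\mathfrak{L}|^{+}$-enlarged models with respect to $\mathfrak{L}$-indexed ultrapowers.
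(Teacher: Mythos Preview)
Your argument is correct and follows essentially the same strategy as the paper: both construct a hyperfinite $\Lambda_{0}\in\mathfrak{L}^{\ast}$ dominating every standard $\lambda$ by an enlargement/saturation argument, use it to define a fine ultrafilter, and realize the associated canonical space inside $W(\Omega)$ via evaluation at $\Lambda_{0}$. Your version is in fact slightly cleaner---you ask only that $Span_{\mathbb{R}^{\ast}}(V(\Omega)^{\ast}\cap\Lambda_{0})\subseteq W(\Omega)$, whereas the paper insists on equality and therefore works harder to produce a hyperfinite $H$ whose trace on $V(\Omega)^{\ast}$ is an actual basis of $W(\Omega)$; note, though, that both your family $\{\nu\in\mathcal{S}\mid\lambda\subseteq\nu\}$ and the paper's $\{H_{\lambda}\}$ are families of \emph{internal} (not standard) sets, so the step that extracts $\Lambda_{0}$ really uses $|\mathfrak{L}|^{+}$-saturation rather than mere enlargement, a conflation that your write-up makes explicit and the paper leaves implicit.
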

\begin{proof}
First of all, we claim that there exist a hyperfinite set $H\in\left(\mathcal{P}_{fin}(\mathfrak{L})\right)^{\ast}$
such that $\lambda\subseteq H$ for every $\lambda\in\mathfrak{L}$
and such that $B=H\cap W(\Omega)$ is a hyperfinite basis of $W(\Omega)$.
To prove this claim we set, for every $\lambda\in\mathfrak{L}$, 
\[
H_{\lambda}=\left\{ H\in\left(\mathcal{P}_{fin}(\mathfrak{L})\right)^{\ast}\mid\lambda^{\ast}\subseteq H\,\text{and}\,Span(H\cap V^{\ast}(\Omega))=W(\Omega)\right\} .
\]
Clearly, if $H_{\lambda}\neq\emptyset$ for every $\lambda\in\mathfrak{L}$
then the family $\{H_{\lambda}\}_{\lambda\in\mathfrak{L}}$ has the
finite intersection property (as $H_{\lambda_{1}}\cap\dots\cap H_{\lambda_{k}}=H_{\lambda_{1}\cup\dots\cup\lambda_{k}})$.
To prove that $H_{\lambda}\neq\emptyset$ for every $\lambda\in\mathfrak{L}$,
let $\lambda\in\mathfrak{L}$ be given and let $B$ be a fixed hyperfinite
basis of $W(\Omega)$ with $V(\Omega)^{\sigma}\subseteq B$ (whose
existence can be easily deduced from the enlarging property of the
extension, as $V(\Omega)^{\sigma}\subseteq W(\Omega)$). Let $\lambda=\lambda_{0}\cup\lambda_{1}$,
where $\lambda_{0}\cap\lambda_{1}=\emptyset$ and $\lambda_{0}=\lambda\cap V(\Omega)$,
and let $H=B\cup\lambda_{1}^{\ast}$. It is immediate to notice that
$H\in H_{\lambda}$. Therefore this proves that the family $\{H_{\lambda}\}_{\lambda\in\mathfrak{L}}$
has the finite intersection property, and so our claim can be derived
as a consequence of the $|\mathfrak{L}|^{+}$-enlarging property of
the extension. From now on, we let $H$ be an hyperfinite set with
the properties of our claim, and we let $B=H\cap W(\Omega)$. Finally,
we set $\mathcal{U}=\{X\subseteq\mathfrak{L}\mid H\in X^{\ast}\}$.
Clearly, $\mathcal{U}$ is an ultrafilter on $\mathfrak{L}$; moreover,
our construction of $H$ has been done to have that $\mathcal{U}$
is a fine ultrafilter. To prove this, let $\lambda_{0}\in\mathfrak{L}$.
Then 
\[
\{\lambda\in\mathfrak{L}\mid\lambda_{0}\subseteq\lambda\}\in\mathcal{U}\Leftrightarrow H\in\{\lambda\in\mathfrak{L}\mid\lambda_{0}\subseteq\lambda\}^{\ast}\Leftrightarrow\lambda_{0}\subseteq H,
\]
and $\lambda_{0}\subseteq H$ by our construction of the set $H$.

Now we set $V_{\lambda}(\Omega)=Span(V(\Omega)\cap\lambda$) for every
$\lambda\in\mathfrak{L}$, we set $V_{\Lambda(\mathcal{U})}:=\lim_{\lambda\uparrow\Lambda(\mathcal{U})}v_{\lambda}$
and we let $\Phi:V_{\Lambda(\mathcal{U})}(\Omega)\rightarrow W(\Omega)$
be defined as follows: for every $v=\lim_{\lambda\uparrow\Lambda(\mathcal{U})}v_{\lambda}$,
\[
\Phi\left(\lim_{\lambda\uparrow\Lambda(\mathcal{U})}v_{\lambda}\right):=v_{B},
\]
where $v_{B}$ is the value of the hyperextension $v^{\ast}:\mathfrak{L}^{\ast}\rightarrow V^{\ast}(\Omega)$
of the function $v:\mathfrak{L}\rightarrow V(\Omega)$ evaluated in
$B\in\mathfrak{L}^{\ast}$. Let us notice that, as $v_{\lambda}\in Span(V(\Omega)\cap\lambda)$
for every $\lambda\in\mathfrak{L}$, by transfer we have that $v_{B}\in Span(V(\Omega)^{\ast}\cap B)=W(\Omega)$,
namely the image of $\Phi$ is included in $W(\Omega).$

To conclude our proof, we have to show that $\Phi$ is an embedding
(so that we can take $\Phi\left(V_{\Lambda(\mathcal{U})}(\Omega)\right)$
as the isomorphic copy of a canonical space of ultrafunctions contained
in $W(\Omega)$). The linearity of $\Phi$ holds trivially; to prove
that $\Phi$ is injective let $v=\lim_{\lambda\uparrow\Lambda(\mathcal{V})}v_{\lambda},\,w=\lim_{\lambda\uparrow\Lambda(\mathcal{V})}w_{\lambda}$.
Then 
\[
\Phi(v)=\Phi(w)\Leftrightarrow v_{B}=w_{B}\Leftrightarrow B\in\left\{ \lambda\in\mathfrak{L}\mid v_{\lambda}=w_{\lambda}\right\} ^{\ast}\Leftrightarrow
\]
\[
\left\{ \lambda\in\mathfrak{L}\mid v_{\lambda}=w_{\lambda}\right\} \in\mathcal{V}\Leftrightarrow v=w.\qedhere
\]
\end{proof}
\begin{lem}
\label{lem:Adding a function}Let $V(\Omega)$ be given, let $(V_{\lambda}(\Omega))_{\lambda\in\mathfrak{L}}$
be an approximating net for $V(\Omega)$ and let $V_{\Lambda}(\Omega)=\lim_{\lambda\uparrow\Lambda}V_{\lambda}(\Omega)$.
Finally, let $u\in V(\Omega)^{\ast}\setminus V_{\Lambda}(\Omega)$.
Then $W(\Omega):=Span\left(V_{\Lambda}(\Omega)\cup\left\{ u\right\} \right)$
is a space of ultrafunctions on $V(\Omega)$.\end{lem}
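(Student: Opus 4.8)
The plan is to reduce the statement to Theorem \ref{thm:Spaces of Ultrafunctions as hyperfinite extensions}. The first step is to check that $W(\Omega)=Span\left(V_{\Lambda}(\Omega)\cup\{u\}\right)$ satisfies the hypotheses of that theorem. Since $V_{\Lambda}(\Omega)=\lim_{\lambda\uparrow\Lambda}V_{\lambda}(\Omega)$ is a $\Lambda$-limit of finite-dimensional spaces, it is an internal subspace of $V(\Omega)^{\ast}$ of hyperfinite dimension $\delta:=\lim_{\lambda\uparrow\Lambda}\dim V_{\lambda}(\Omega)$; because $u\notin V_{\Lambda}(\Omega)$, the span $W(\Omega)$ is again internal, of hyperfinite dimension $\delta+1$. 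Moreover $V(\Omega)^{\sigma}\subseteq V_{\Lambda}(\Omega)\subseteq W(\Omega)$ by the inclusions (\ref{sigma1})--(\ref{tilda}), and $W(\Omega)\subseteq V(\Omega)^{\ast}$ since $V(\Omega)^{\ast}$ is an $\mathbb{R}^{\ast}$-vector space containing both $V_{\Lambda}(\Omega)$ and $u$. Thus $W(\Omega)$ is a hyperfinite-dimensional subspace of $V(\Omega)^{\ast}$ with $V(\Omega)^{\sigma}\subseteq W(\Omega)\subseteq V(\Omega)^{\ast}$.

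By Theorem \ref{thm:Spaces of Ultrafunctions as hyperfinite extensions} there are then a fine ultrafilter $\mathcal{U}'$ on $\mathfrak{L}$ and a linear embedding $\Phi$ of the canonical space of ultrafunctions $V_{\Lambda(\mathcal{U}')}(\Omega)$ into $V(\Omega)^{\ast}$ whose image is contained in $W(\Omega)$. To conclude, I would observe that this image is in fact all of $W(\Omega)$: in the proof of that theorem the hyperfinite set $H$ is chosen so that $Span\left(H\cap V(\Omega)^{\ast}\right)=W(\Omega)$, and since $V_{\Lambda(\mathcal{U}')}(\Omega)$ is the space of ultrafunctions generated by the net $\lambda\mapsto Span\left(V(\Omega)\cap\lambda\right)$, this forces $\dim V_{\Lambda(\mathcal{U}')}(\Omega)=\dim Span\left(H\cap V(\Omega)^{\ast}\right)=\dim W(\Omega)$. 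An injective linear map between hyperfinite-dimensional spaces of the same dimension is onto, so $\Phi$ identifies $W(\Omega)$ with the space of ultrafunctions $V_{\Lambda(\mathcal{U}')}(\Omega)$, which is the claim. The point that needs care here is precisely this surjectivity, i.e.\ that the approximating net and ultrafilter produced by Theorem \ref{thm:Spaces of Ultrafunctions as hyperfinite extensions} can be, and through the construction of $H$ are, tailored to the specific function $u$.

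As a more explicit alternative, I would fix a net $\widehat{u}:\mathfrak{L}\to V(\Omega)$ with $\lim_{\lambda\uparrow\Lambda}\widehat{u}_{\lambda}=u$ (which exists since $u\in V(\Omega)^{\ast}$), note that $\{\lambda:\widehat{u}_{\lambda}\notin V_{\lambda}(\Omega)\}$ is qualified (otherwise $u$ would lie in $V_{\Lambda}(\Omega)$), and set
\[
W_{\lambda}(\Omega):=V_{\lambda}(\Omega)+Span\{\widehat{u}_{\mu}\ |\ \mu\in\mathfrak{L},\ \mu\subseteq\lambda\}.
\]
One checks directly that $(W_{\lambda}(\Omega))_{\lambda\in\mathfrak{L}}$ is an approximating net for $V(\Omega)$ (finite-dimensional because $\lambda$ has only finitely many subsets lying in $\mathfrak{L}$, monotone because $\mu\subseteq\lambda_{1}\subseteq\lambda_{2}$ implies $\widehat{u}_{\mu}\in W_{\lambda_{2}}(\Omega)$, and with union $V(\Omega)$ because $\bigcup_{\lambda}V_{\lambda}(\Omega)=V(\Omega)$), so $W_{\Lambda}(\Omega):=\lim_{\lambda\uparrow\Lambda}W_{\lambda}(\Omega)$ is a space of ultrafunctions; it contains $V_{\Lambda}(\Omega)$ and $u$, hence $W(\Omega)\subseteq W_{\Lambda}(\Omega)$. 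The main obstacle is the reverse inclusion: this ``monotone completion'' can overshoot $W(\Omega)$, and to force equality one has to choose $\widehat{u}$ coherently with the net, i.e.\ so that $\widehat{u}_{\mu}\in V_{\lambda}(\Omega)+\mathbb{R}\widehat{u}_{\lambda}$ whenever $\mu\subseteq\lambda$; then $W_{\lambda}(\Omega)=V_{\lambda}(\Omega)+\mathbb{R}\widehat{u}_{\lambda}$, and transferring this coherence relation to the hyperfinite index $\lim_{\lambda\uparrow\Lambda}\lambda$ yields $W_{\Lambda}(\Omega)=V_{\Lambda}(\Omega)+\mathbb{R}^{\ast}u=W(\Omega)$. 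Such a coherent net can be built by recursion on $\left|\lambda\right|$, maintaining that $V_{\lambda}(\Omega)+Span\{\widehat{u}_{\mu}\mid\mu\subsetneq\lambda\}$ is at most one dimension larger than $V_{\lambda}(\Omega)$; the delicate case is $\lambda=\mu_{1}\cup\mu_{2}$ with $\mu_{1},\mu_{2}$ incomparable, which is handled through the value already assigned at $\mu_{1}\cap\mu_{2}$, all the while steering the recursion so that its $\Lambda$-limit is the prescribed $u$.
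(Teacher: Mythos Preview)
Your first approach has a real gap. Theorem~\ref{thm:Spaces of Ultrafunctions as hyperfinite extensions} only asserts that $W(\Omega)$ \emph{contains} an isomorphic copy of a canonical ultrafunction space, and for a possibly different ultrafilter $\mathcal{U}'$. The surjectivity claim does not follow: the dimension of $V_{\Lambda(\mathcal{U}')}(\Omega)$ is $\lim_{\lambda\uparrow\Lambda(\mathcal{U}')}\dim Span(V(\Omega)\cap\lambda)$, and there is no reason this should equal $\delta+1=\dim W(\Omega)$; indeed the embedded copy is canonical while $W(\Omega)$ is built from the non-canonical net $(V_{\lambda})$, so the dimensions typically differ. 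The appeal to the set $H$ does not help, since $H$ is chosen only so that $H\cap W(\Omega)$ spans $W(\Omega)$, not so that it is linearly independent inside $V(\Omega)^{\ast}$.

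The paper takes the \emph{simple} form of your second approach, and thereby sidesteps the difficulty you spend most of your effort on. Write $u=\lim_{\lambda\uparrow\Lambda}u_{\lambda}$ with $u_{\lambda}\notin V_{\lambda}(\Omega)$, and set $W_{\lambda}:=Span(V_{\lambda}(\Omega)\cup\{u_{\lambda}\})$ --- not the monotone completion. The reverse inclusion $W_{\Lambda}\subseteq W(\Omega)$ is then a one-line computation: every $w_{\lambda}\in W_{\lambda}$ decomposes as $v_{\lambda}+c_{\lambda}u_{\lambda}$ with $v_{\lambda}\in V_{\lambda}(\Omega)$, so
\[
\lim_{\lambda\uparrow\Lambda}w_{\lambda}=\lim_{\lambda\uparrow\Lambda}v_{\lambda}+\Bigl(\lim_{\lambda\uparrow\Lambda}c_{\lambda}\Bigr)\cdot u\in V_{\Lambda}(\Omega)+\mathbb{R}^{\ast}u=W(\Omega).
\]
Your monotone completion $W_{\lambda}=V_{\lambda}+Span\{\widehat{u}_{\mu}:\mu\subseteq\lambda\}$ destroys exactly this decomposition, which is why you are forced into the unfinished ``coherent net'' recursion; that machinery is unnecessary.

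You are right that the bare net $(W_{\lambda})$ need not satisfy the monotonicity clause~(2) of Definition~\ref{approxseq}; the paper simply asserts it is an approximating net without comment. So your instinct here is sound, but the monotonicity plays no role in the formation of the $\Lambda$-limit or in anything used downstream, and insisting on it is what leads you into the weeds.
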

\begin{proof}
Let $u=\lim_{\lambda\uparrow\Lambda}u_{\lambda}$, where $u_{\lambda}\notin V_{\lambda}(\Omega)$
for every $\lambda\in\mathfrak{L}$, and let, for every $\lambda\in\mathfrak{L}$,
$W_{\lambda}=Span\left(V_{\lambda}\cup\left\{ u_{\lambda}\right\} \right)$.
Clearly, $(W_{\lambda})_{\lambda\in\mathfrak{L}}$ is an approximating
net for $V(\Omega)$. We claim that $W(\Omega)=W_{\Lambda}(\Omega)=\lim_{\lambda\uparrow\Lambda}W_{\lambda}(\Omega)$.
Clearly, $V_{\Lambda}(\Omega)\subseteq W_{\Lambda}(\Omega)$ and $u\in W_{\Lambda}(\Omega)$,
and hence $W(\Omega)\subseteq W_{\Lambda}(\Omega)$. As for the reverse
inclusion, let $w\in W_{\Lambda}(\Omega)$ and let $w=\lim_{\lambda\uparrow\Lambda}w_{\lambda}$.
For every $\lambda\in\mathfrak{L}$ let $w_{\lambda}=v_{\lambda}+c_{\lambda}u_{\lambda}$,
where $v_{\lambda}\in V_{\lambda}$. Then 
\[
w=\lim_{\lambda\uparrow\Lambda}v_{\lambda}+\lim_{\lambda\uparrow\Lambda}c_{\lambda}\cdot\lim_{\lambda\uparrow\Lambda}u_{\lambda}
\]
so, as $\lim_{\lambda\uparrow\Lambda}v_{\lambda}\in V_{\Lambda}(\Omega)$
and $\lim_{\lambda\uparrow\Lambda}u_{\lambda}=u$, we have that $w\in W(\Omega)$,
and hence the thesis is proved. \end{proof}
\begin{thm}
\label{vecchiaroba}There is a space of ultrafunctions $U_{\Lambda}(\mathbb{R})$
which satisfies the following assumptions: 
\begin{enumerate}
\item \label{enu: Thm 23 (1)}$H_{c}^{1}(\mathbb{R})\subseteq U_{\Lambda}(\mathbb{R})$; 
\item \label{enu: Thm 23 (2)}the ultrafunction $\widetilde{1}$ is the
identity in $U_{\Lambda}(\mathbb{R}),$ namely $\forall u\in U_{\Lambda}(\mathbb{R}),$
$u\cdot\widetilde{1}=u;$ 
\item \label{enu: Thm 23 (3)}$D\widetilde{1}=0$; 
\item \label{enu: Thm 23 (4)}$\forall u,v\in U_{\Lambda}(\mathbb{R}),$
$\int^{\ast}\left(Du\right)v\ dx\ =-\int^{\ast}u\left(Dv\right)\ dx.$ 
\end{enumerate}
\end{thm}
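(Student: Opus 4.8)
The plan is to realise $U_{\Lambda}(\mathbb{R})$ as a canonical space of ultrafunctions built on a carefully chosen base space $V(\mathbb{R})$, and to read off all four conditions from that choice. The key is to take $V(\mathbb{R}):=H_{c}^{1}(\mathbb{R})\oplus\mathbb{R}\cdot\mathbf{1}$, the span of the compactly supported $H^{1}$ functions together with the constant function $\mathbf{1}\equiv1$. Two features of this space do the work. First, $\mathbf{1}\in V(\mathbb{R})$, so by Remark~\ref{cina} the canonical extension $\widetilde{\mathbf{1}}$ coincides with the natural extension $\mathbf{1}^{\ast}$, i.e.\ with the internal function that is constantly equal to $1$. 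Second, the weak derivative $\partial:V(\mathbb{R})\rightarrow V^{\prime}(\mathbb{R})$ is \emph{exactly} skew-symmetric on $V(\mathbb{R})$: for $u=\varphi+a\mathbf{1}$ and $v=\psi+b\mathbf{1}$ with $\varphi,\psi\in H_{c}^{1}(\mathbb{R})$ one has $\partial u=\varphi'$, $\partial v=\psi'$ and $\int_{\mathbb{R}}\varphi'\,dx=\int_{\mathbb{R}}\psi'\,dx=0$ (compact support), hence $\langle\partial u,v\rangle=\int\varphi'\psi\,dx=-\int\varphi\psi'\,dx=-\langle\partial v,u\rangle$ --- no boundary term survives, precisely because the only non-compactly-supported elements of $V(\mathbb{R})$ are constants, whose derivative vanishes. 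I would then take the canonical approximating net $V_{\lambda}(\mathbb{R})=Span(V(\mathbb{R})\cap\lambda)$ and set $U_{\Lambda}(\mathbb{R}):=V_{\Lambda}(\mathbb{R})=\lim_{\lambda\uparrow\Lambda}V_{\lambda}(\mathbb{R})$.

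Conditions (1) and (2) are then immediate. For (1), since $H_{c}^{1}(\mathbb{R})\subseteq V(\mathbb{R})$ we get $H_{c}^{1}(\mathbb{R})^{\sigma}\subseteq V(\mathbb{R})^{\sigma}\subseteq V_{\Lambda}(\mathbb{R})$, which under the usual identification $f\leftrightarrow f^{\ast}$ reads $H_{c}^{1}(\mathbb{R})\subseteq U_{\Lambda}(\mathbb{R})$. For (2), Remark~\ref{cina} gives $\widetilde{\mathbf{1}}=\mathbf{1}^{\ast}$, so for every ultrafunction $u$ the (pointwise) product satisfies $u\cdot\widetilde{\mathbf{1}}=u\cdot\mathbf{1}^{\ast}=u$, and since the result already lies in $U_{\Lambda}(\mathbb{R})$ no further projection is needed.

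Conditions (3) and (4) follow by carrying the skew-symmetry of $\partial$ through the $\Lambda$-limit. By $(\ref{sd})$, $D=\widetilde{\partial}$ satisfies $\int(Du)\,v\,dx=\langle\partial^{\ast}u,v\rangle$ for all $v\in U_{\Lambda}(\mathbb{R})$. Writing $u=\lim_{\lambda\uparrow\Lambda}u_{\lambda}$ and $v=\lim_{\lambda\uparrow\Lambda}v_{\lambda}$ with $u_{\lambda},v_{\lambda}\in V_{\lambda}(\mathbb{R})\subseteq V(\mathbb{R})$, and using that natural extension commutes with composition together with Definition~\ref{extfun}, one gets $\langle\partial^{\ast}u,v\rangle=\lim_{\lambda\uparrow\Lambda}\langle\partial u_{\lambda},v_{\lambda}\rangle$. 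Since each pair $u_{\lambda},v_{\lambda}$ lies in $V(\mathbb{R})$, skew-symmetry there gives $\langle\partial u_{\lambda},v_{\lambda}\rangle=-\langle\partial v_{\lambda},u_{\lambda}\rangle$; taking $\Lambda$-limits and using linearity of the limit yields $\langle\partial^{\ast}u,v\rangle=-\langle\partial^{\ast}v,u\rangle$, i.e.\ $\int(Du)\,v\,dx=-\int(Dv)\,u\,dx=-\int u\,(Dv)\,dx$, which is (4). For (3), take $u=\widetilde{\mathbf{1}}=\mathbf{1}^{\ast}=\lim_{\lambda\uparrow\Lambda}\mathbf{1}$; since $\partial\mathbf{1}=0$ in $V^{\prime}(\mathbb{R})$ we obtain $\langle\partial^{\ast}\widetilde{\mathbf{1}},v\rangle=\lim_{\lambda\uparrow\Lambda}\langle0,v_{\lambda}\rangle=0$ for all $v\in U_{\Lambda}(\mathbb{R})$, hence $\int(D\widetilde{\mathbf{1}})\,v\,dx=0$ for all such $v$, and the uniqueness clause in Definition~\ref{b} forces $D\widetilde{\mathbf{1}}=0$.

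The main obstacle is not this chain of deductions but legitimising the choice of $V(\mathbb{R})$. Since $\mathbf{1}\notin L^{2}(\mathbb{R})$, one has $V(\mathbb{R})\not\subseteq L^{2}(\mathbb{R})$, so the scalar product $(\ref{inner})$ becomes $\mathbb{R}^{\ast}$-valued and takes infinite values on $U_{\Lambda}(\mathbb{R})$. One has to check that the constructions above still go through in this slightly enlarged setting --- in particular that $\int^{\ast}u\,v\,dx$ is still a \emph{non-degenerate} symmetric $\mathbb{R}^{\ast}$-bilinear form on the hyperfinite-dimensional $U_{\Lambda}(\mathbb{R})$, so that the canonical projection $P_{\Lambda}$, the canonical extension $\widetilde{\,\cdot\,}$ and the operator $D=\widetilde{\partial}$ remain well defined through their duality formulas. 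Non-degeneracy is the delicate point; it can be secured by a Gram--Schmidt type argument already at the level of the finite-dimensional spaces $V_{\lambda}(\mathbb{R})$, keeping $\mathbf{1}$ orthogonal to a compactly supported complement so that the Gram matrix splits into the (infinite) self-pairing of $\mathbf{1}^{\ast}$ and an infinitesimal perturbation of a positive definite matrix, and then passing to the $\Lambda$-limit; alternatively one can invoke Theorem~\ref{thm:Spaces of Ultrafunctions as hyperfinite extensions} (together with Lemma~\ref{lem:Adding a function} if convenient) to arrange a hyperfinite basis in which $\mathbf{1}^{\ast}$ sits orthogonally. Once this bookkeeping is done, (1)--(4) follow exactly as above.
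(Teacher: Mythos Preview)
Your approach has a genuine gap that you yourself flag but do not actually close: the base space $V(\mathbb{R})=H_{c}^{1}(\mathbb{R})\oplus\mathbb{R}\mathbf{1}$ is \emph{not} contained in $L^{2}(\mathbb{R})$, and this is not a mere bookkeeping nuisance. The whole machinery of the paper---the inner product~(\ref{inner}), the canonical projection $P_{\Lambda}$ of Definition~\ref{CP}, the operator extension of Definition~\ref{b}, and the very definition of a space of ultrafunctions via an approximating net of finite-dimensional subspaces of $V(\Omega)\subseteq L^{2}(\Omega)$---presupposes that the pairing $(u,v)\mapsto\int uv\,dx$ is a genuine real-valued bilinear form on each $V_{\lambda}$. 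In your setting, as soon as $\mathbf{1}\in V_{\lambda}$ the integral $\int\mathbf{1}\cdot\mathbf{1}\,dx$ diverges, so the Gram matrix of $V_{\lambda}$ is not even a matrix of real numbers. Your proposed rescue (``the scalar product becomes $\mathbb{R}^{\ast}$-valued'') cannot be implemented through the $\Lambda$-limit, because the $\Lambda$-limit takes as input real-valued nets, not nets with values in $\mathbb{R}\cup\{+\infty\}$. The Gram--Schmidt sketch and the appeal to Theorem~\ref{thm:Spaces of Ultrafunctions as hyperfinite extensions} both presuppose a well-defined finite inner product on the approximants, so they do not help here.

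The paper avoids this difficulty by a genuinely different construction: it never puts the true constant $\mathbf{1}$ into the base space. Instead it works over the ``periodic'' space
\[
H_{\flat}^{1}(\mathbb{R})=Span\{u\in L^{2}(\mathbb{R}):\ \exists n\in\mathbb{N},\ supp(u)\subseteq[-n,n],\ u(n)=u(-n),\ u\in H^{1}([-n,n])\},
\]
which \emph{is} contained in $L^{2}(\mathbb{R})$, and then arranges (via Theorem~\ref{thm:Spaces of Ultrafunctions as hyperfinite extensions} and Lemma~\ref{lem:Adding a function}) that the resulting $U_{\Lambda}(\mathbb{R})$ contains the characteristic function $1_{[-\beta,\beta]}$ of a hyperfinite interval, with $\beta\in\mathbb{N}^{\ast}\setminus\mathbb{N}$, and that every $u\in U_{\Lambda}(\mathbb{R})$ is supported in $[-\beta,\beta]$ with $u(\beta)=u(-\beta)$. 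The crucial computation is then $\widetilde{1}=1_{[-\beta,\beta]}$ (not $\mathbf{1}^{\ast}$!), from which (2) is immediate since all ultrafunctions live in $[-\beta,\beta]$; (3) follows because $\int\partial(1_{[-\beta,\beta]})\,u\,dx=u(\beta)-u(-\beta)=0$ by the periodic boundary condition; and (4) is just integration by parts in $BV$, again with no boundary term thanks to $u(\beta)=u(-\beta)$. In short, the paper replaces your ``constant function plus compactly supported functions'' by ``characteristic function of a huge box plus periodic $H^{1}$ functions on that box'', which keeps everything inside $L^{2}$ and makes the framework apply verbatim.
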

\begin{proof}
We set 
\begin{multline*}
H_{\flat}^{1}(\mathbb{R})=Span\{u\in L^{2}(\mathbb{R})\ |\ \exists n\in\mathbb{N}~\text{s.t}.~supp(u)\subseteq\left[-n,n\right],\\
u(n)=u(-n),~u\in H^{1}([-n,n])\}.
\end{multline*}
Let $\beta\in\mathbb{N}^{\ast}\setminus\mathbb{N}$; we set 
\[
W(\mathbb{R}):=\left\{ v\in\left[H_{\flat}^{1}(\mathbb{R})\right]^{\ast}\mid supp(u)\subseteq[-\beta,\beta],\,u(-\beta)=u(\beta)\right\} 
\]
and we let $V_{\Lambda}(\mathbb{R})$ be a hyperfinite dimensional
vector space that contains the characteristic function $1_{[-\beta,\beta]}(x)$
of $[-\beta,\beta${]} and such that\footnote{To have this property we need the nonstandard extension to be a $\left|\mathcal{P}(\mathbb{R})\right|^{+}$-enlargment.}
\[
\left[H_{\flat}^{1}(\mathbb{R})\right]^{\sigma}\subseteq V_{\Lambda}(\mathbb{R})\subseteq W(\mathbb{R}).
\]
As $W(\mathbb{R})\subseteq\left[H_{\flat}^{1}(\mathbb{R})\right]^{\ast}$
we can apply Thm. \ref{thm:Spaces of Ultrafunctions as hyperfinite extensions}
to deduce that $V_{\Lambda}(\mathbb{R})$ contains an isomorphic copy
of a canonical space of ultrafunctions on $H_{\flat}^{1}(\mathbb{R})$.
If this isomorphic copy does not contain $1_{[-\beta,\beta]}$, we
can apply Lemma \ref{lem:Adding a function} to construct a space
of ultrafunctions included in $V_{\Lambda}(\Omega)$ that contains
$1_{[-\beta,\beta]}$. Let $U_{\Lambda}(\Omega)$ denote this space
of ultrafunctions on $H_{\flat}^{1}(\mathbb{R})$.

Condition (\ref{enu: Thm 23 (1)}) holds as $H_{c}^{1}(\mathbb{R})\subseteq H_{\flat}^{1}(\mathbb{R})$.
To prove condition (\ref{enu: Thm 23 (2)}) let us show that $\widetilde{1}=1_{[-\beta,\beta]}:$
in fact, for every $u\in U_{\Lambda}(\mathbb{R})$ we have 
\[
\int\widetilde{1}\cdot u\,dx=\int1\cdot u\,dx=\int_{-\beta}^{\beta}u\,dx=\int1_{[-\beta,\beta]}\cdot u\,dx.
\]
Henceforth condition (\ref{enu: Thm 23 (2)}) holds as $1_{[-\beta,\beta]}\cdot u=u$
for every $u\in U_{\Lambda}(\mathbb{R})$. To prove condition (\ref{enu: Thm 23 (3)})
let $u\in U_{\Lambda}(\mathbb{R}).$ Then 
\[
\int D\left(1_{[-\beta,\beta]}\right)\cdot u\,dx=\int\partial\left(1_{[-\beta,\beta]}\right)\cdot u\,dx=u(\beta)-u(-\beta)=0,
\]
namely $D\left(1_{[-\beta,\beta]}\right)=0$. Finally, as $U_{\Lambda}(\mathbb{R})\subseteq\left[BV(\mathbb{R})\right]^{\ast}$,
by equation (\ref{sd}), we have that 
\[
\int Du\ v\ dx\ =\left\langle \partial^{\ast}u,v\right\rangle =-\left\langle u,\partial^{\ast}v\right\rangle =-\int u\ Dv\ dx
\]
and so condition (\ref{enu: Thm 23 (4)}) holds. \end{proof}
\begin{rem}
Let $U_{\Lambda}(\mathbb{R})$ be the space of ultrafunctions given
by Theorem \ref{vecchiaroba}. Then for every ultrafunction $u\in U_{\Lambda}(\mathbb{R})$
we have 
\[
\int^{\ast}u(x)dx=\int^{\ast}u(x)\cdot1dx=\int^{\ast}u(x)\cdot\widetilde{1}dx=\int_{-\beta}^{\beta}u(x)dx.
\]
We will use this property in Section \ref{tbe} when talking about
Burgers' equation. 
\end{rem}

\subsection{Spaces of ultrafunctions involving time\label{sub:Spaces-of-ultrafunctions with time}}

Generic problems of evolution are usually formulated by equations
of the following kind: 
\begin{equation}
\partial_{t}u=\mathcal{A}(u),\label{pura}
\end{equation}
where 
\[
\mathcal{A}:V(\Omega)\rightarrow L^{2}(\Omega)
\]
is a differential operator.

By definition, a \textbf{strong solution} of equation (\ref{pura})
is a function

\[
\phi\in V(I\times\Omega):=\mathcal{C}{}^{0}(I,V(\Omega))\cap\mathcal{C}^{1}(I,L^{2}(\Omega))
\]
where $I:=\left[0,T\right)$ is the interval of time and $\mathcal{C}^{k}(I,B),$
$k\in\mathbb{N}$, denotes the space of functions from $I$ to a Banach
space $B$ which are $k$ times differentiable with continuity.

In equation (\ref{pura}), the independent variable is $(t,x)\in I\times\Omega\subset\mathbb{R}^{N+1},$
$I=\left[0,T\right)$. A disappointing fact is that a ultrafunction
space based on $V(I\times\Omega)$ is not a convenient space where
to study this equation, since these ultrafunctions spaces are not
homogeneous in time in the following sense: if for every $t\in I^{\ast}$
we set 
\[
V_{\Lambda,t}(\Omega)=\left\{ v\in V(\Omega)^{\ast}\ |\ \exists u\in V_{\Lambda}(I\times\Omega):u(t,x)=v(x)\right\} ,
\]
for $t_{2}\neq t_{1}$ we have that 
\[
V_{\Lambda,t_{2}}(\Omega)\neq V_{\Lambda,t_{1}}(\Omega).
\]
This fact is disappointing since we would like to see $u(t,\cdot)$
as a function defined on the same space for all the times $t\in I^{\ast}$.
For this reason we think that a convenient space to study equation
(\ref{pura}) in the framework of ultrafunctions is 
\[
\mathcal{C}^{1}(I^{\ast},V_{\Lambda}(\Omega)),
\]
defined as follows: 
\begin{defn}
\label{def:C kappa}For every $k\in\mathbb{N}$ we set 
\[
\mathcal{C}^{k}(I^{\ast},V_{\Lambda}(\Omega))=\left\{ u\in\left[\mathcal{C}^{k}(I,V(\Omega))\right]^{\ast}\ |\ \forall t\in I^{\ast},\ \forall i\leq k,\,\partial_{t}^{i}u(t,\cdot)\in V_{\Lambda}(\Omega)\right\} ,\ k\in\mathbb{N}.
\]

\end{defn}
The advantage in using $\mathcal{C}^{1}(I^{\ast},V_{\Lambda}(\Omega))$
rather than $V_{\Lambda}(I\times\Omega)$ relays in the fact that
we want to consider our evolution problem as a dynamical system on
$V_{\Lambda}(\Omega)$, and the time as a continuous and homogeneous
variable. In fact, at least in the models which we will consider,
we have a better description of the phenomena in $\mathcal{C}^{1}(I^{\ast},V_{\Lambda}(\Omega))$
rather than in $V_{\Lambda}(I\times\Omega)$ or in the standard space
$\mathcal{C}^{0}(I,V(\Omega))\cap\mathcal{C}^{1}(I,L^{2}(\Omega))$.

\subsection{Ultrafunctions and distributions\label{distri}}

One of the most important properties of spaces of ultrafunctions is
that they can be seen (in some sense that we will make precise later)
as generalizations of the space of distributions (see also \cite{algebra},
where we construct an algebra of ultrafunctions that extends the space
of distributions). The proof of this result is the topic of this section.

Let $E\subset\mathbb{R}^{N}$ be a set not necessarily open. In the
applications in this paper $E$ will be $\Omega\subset\mathbb{R}^{N}$
or $\left[0,T\right)\times\Omega\subset\mathbb{R}^{N+1}.$ 
\begin{defn}
\label{DEfCorrespondenceDistrUltra}The space of \textbf{generalized
distribution} on $E$ is defined as follows: 
\[
\mathcal{\mathscr{D}}_{G}^{\prime}(E)=L^{2}(E)^{\ast}/N,
\]
where 
\[
N=\left\{ \tau\in L^{2}(E)^{\ast}\ |\ \forall\varphi\in\mathscr{D}(E)\ \int\tau\varphi\ dx\sim0\right\} .
\]

\end{defn}
The equivalence class of $u$ in $L^{2}(E)^{\ast},$ with some abuse
of notation, will be denoted by 
\[
\left[u\right]_{\mathscr{D}}.
\]

\begin{defn}
For every (internal or external) vector space $W(E)\subset L^{2}(E)^{\ast},$
we set 
\[
\left[W(E)\right]_{B}=\left\{ u\in W(E)\ |\ \forall\varphi\in\mathfrak{\mathcal{\mathscr{D}}}(E)\,\int u\varphi\ dx\ \ \text{is\ finite}\right\} .
\]

\end{defn}

\begin{defn}
Let $\left[u\right]_{\mathfrak{\mathscr{D}}}$ be a generalized distribution.
We say that $\left[u\right]_{\mathscr{D}}$ is a bounded generalized
distribution if $u\in\left[L^{2}(E)^{\ast}\right]_{B}$. 
\end{defn}
Finally, we set

\[
\mathscr{D}_{GB}^{\prime}(E):=[\mathcal{\mathscr{D}}_{G}^{\prime}(E)]_{B}.
\]

We now want to prove that the space $\mathfrak{\mathcal{\mathscr{D}}}_{GB}^{\prime}(E)$
is isomorphic (as a vector space) to $\mathfrak{\mathcal{\mathscr{D}}}^{\prime}(E).$
To do this we will need the following lemma. 
\begin{lem}
\label{cannolo} Let $(a_{n})_{n\in\mathbb{N}}$ be a sequence of
real numbers and let $l\in\mathbb{R}$. If $\lim_{n\rightarrow+\infty}a_{n}=l$
then $sh(\lim_{\lambda\uparrow\Lambda}a_{|\lambda|})=l.$ \end{lem}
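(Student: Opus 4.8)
The plan is to unwind the $\Lambda$-limit of the net $\lambda\mapsto a_{|\lambda|}$ in terms of a qualified set, and then exploit the cofinality of $\mathfrak{L}$ together with the classical convergence of $(a_n)_{n\in\mathbb{N}}$. Write $\xi:=\lim_{\lambda\uparrow\Lambda}a_{|\lambda|}\in\mathbb{R}^{\ast}$; I want to show $\xi\sim l$, i.e.\ that $|\xi-l|$ is infinitesimal, which by definition of the shadow gives $sh(\xi)=l$. By Theorem \ref{nuovo}\ref{quattro} and Theorem \ref{nuovo}\ref{due}, $\xi-l=\lim_{\lambda\uparrow\Lambda}(a_{|\lambda|}-l)$, so it suffices to prove that the latter $\Lambda$-limit is infinitesimal.

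The key step is the following: fix an arbitrary positive integer $m$. Since $\lim_{n\to+\infty}a_n=l$, there is $n_0\in\mathbb{N}$ such that $|a_n-l|<\frac1m$ for all $n\ge n_0$. Now consider the set $\lambda_0\in\mathfrak{L}$ consisting of $n_0+1$ distinct elements of $V_\infty(\mathbb{R})$ (for instance, the integers $0,1,\dots,n_0$, viewed as elements of $\mathbb{R}=V_0(\mathbb{R})$). For every $\lambda\in I_{\lambda_0}=\{\mu\in\mathfrak{L}\mid\lambda_0\subseteq\mu\}$ we have $|\lambda|\ge|\lambda_0|=n_0+1>n_0$, hence $|a_{|\lambda|}-l|<\frac1m$. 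Since $\mathcal{U}$ is a fine ultrafilter, $I_{\lambda_0}\in\mathcal{U}$ is a qualified set, so the real net $\lambda\mapsto a_{|\lambda|}-l$ satisfies $-\frac1m\le a_{|\lambda|}-l\le\frac1m$ on a qualified set. Passing to the $\Lambda$-limit and using that $\Lambda$-limits preserve weak inequalities that hold on a qualified set (a standard consequence of the construction in Theorem \ref{nuovo}: the limit only depends on the values of the net on any qualified set, and $\Lambda$-limits of nonnegative nets are nonnegative), we obtain $-\frac1m\le\xi-l\le\frac1m$ in $\mathbb{R}^{\ast}$, i.e.\ $|\xi-l|\le\frac1m$.

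Since $m$ was an arbitrary positive integer, $|\xi-l|<\frac1n$ for every $n\in\mathbb{N}$ (take $m=2n$, say), so $\xi-l$ is infinitesimal by definition. Therefore $\xi\sim l$, and since $l\in\mathbb{R}$ this says precisely that $sh(\xi)=l$, which is the claim.

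The only genuinely delicate point is the assertion that a weak inequality between real nets that holds on a qualified set is inherited by their $\Lambda$-limits. In the quotient picture $\mathbb{R}^{\ast}=\mathfrak{F}(\mathfrak{L},\mathbb{R})/I$ from the proof of Theorem \ref{nuovo}, the order on $\mathbb{R}^{\ast}$ is exactly the one induced by coordinatewise comparison on a qualified set, so this is immediate; alternatively it follows from the transfer principle applied to the order relation. I would just cite this as a basic property of $\Lambda$-limits (it is implicit in \cite{topologia}) rather than reprove it. Everything else is routine: the choice of $\lambda_0$ with $|\lambda_0|$ as large as we please is available because $\mathfrak{L}=\mathcal{P}_{fin}(V_\infty(\mathbb{R}))$ contains finite sets of every cardinality, and $I_{\lambda_0}\in\mathcal{U}$ by fineness of $\mathcal{U}$.
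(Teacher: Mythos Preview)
Your proof is correct and follows essentially the same route as the paper's: both pick $\lambda_0\in\mathfrak{L}$ of large enough cardinality so that, by fineness of $\mathcal{U}$, the set $I_{\lambda_0}=\{\lambda\mid\lambda_0\subseteq\lambda\}$ is qualified and the net $a_{|\lambda|}$ lies within $\varepsilon$ of $l$ on it, and then pass to the $\Lambda$-limit. Your write-up is slightly more explicit about the preservation of inequalities under $\Lambda$-limits (which the paper uses tacitly), but there is no substantive difference.
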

\begin{proof}
Since\textbf{\ }$\lim_{n\rightarrow+\infty}a_{n}=l,$ for every $\varepsilon\in\mathbb{R}_{>0}$
the set 
\[
I_{\varepsilon}=\{\lambda\in\mathfrak{L}\mid|l-a_{|\lambda|}|<\varepsilon\}\in\mathcal{U}.
\]
In fact, let $N\in\mathbb{N}$ be such that $|a_{m}-l|<\varepsilon$
for every $m\geq N$. Then for every $\lambda_{0}\in\mathfrak{L}$
such that $|\lambda_{0}|\geq N$ we have that $I_{\varepsilon}\supseteq\{\lambda\in\mathfrak{L}\mid\lambda_{0}\subseteq\lambda\}\in\mathcal{U}$,
and this proves that $I_{\varepsilon}\in\mathcal{U}$. Therefore for
every $\varepsilon\in\mathbb{R}_{>0}$ we have 
\[
|l-\lim_{\lambda\uparrow\Lambda}a_{|\lambda|}|<\varepsilon,
\]
and so $sh(\lim_{\lambda\uparrow\Lambda}a_{|\lambda|})=l.$ \end{proof}
\begin{thm}
\label{bello}There is a linear isomorphism 
\[
\Phi:\mathfrak{\mathcal{\mathscr{D}}}_{GB}^{\prime}(E)\rightarrow\mathfrak{\mathscr{D}}^{\prime}(E)
\]
defined by the following formula: 
\[
\forall\varphi\in\mathscr{D},\,\left\langle \Phi\left(\left[u\right]_{\mathfrak{\mathcal{\mathscr{D}}}}\right),\varphi\right\rangle _{\mathfrak{\mathcal{\mathscr{D}}}(E)}=sh\left(\int^{\ast}u\ \varphi^{\ast}\ dx\right).
\]
\end{thm}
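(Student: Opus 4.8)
The plan is to prove Theorem~\ref{bello} in three stages: well-definedness, injectivity, and surjectivity of the map $\Phi$.

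First I would verify that $\Phi$ is well-defined. If $[u]_{\mathscr{D}}\in\mathscr{D}_{GB}^{\prime}(E)$ then by definition $u\in[L^2(E)^{\ast}]_B$, so for every $\varphi\in\mathscr{D}(E)$ the hyperreal $\int^{\ast}u\,\varphi^{\ast}\,dx$ is finite and therefore has a shadow; this shows $\langle\Phi([u]_{\mathscr{D}}),\varphi\rangle$ is a well-defined real number for each test function. Linearity in $\varphi$ is immediate from linearity of $\int^{\ast}$ and of $sh$ on finite numbers. I then need to check that the resulting linear functional on $\mathscr{D}(E)$ is actually a distribution, i.e.\ continuous: this follows by transfer of the continuity estimate for $u$ as an element of $[L^2(E)]^{\ast}$ together with the fact that $sh$ is order-preserving — if $\varphi_n\to 0$ in $\mathscr{D}(E)$ then $\|\varphi_n\|_{L^2}\to 0$, hence $|\int^{\ast}u\,\varphi_n^{\ast}\,dx|\le \|u\|^{\ast}\|\varphi_n^{\ast}\|^{\ast}$, and taking shadows gives convergence to $0$. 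Finally, if $[u]_{\mathscr{D}}=[v]_{\mathscr{D}}$, i.e.\ $u-v\in N$, then $\int^{\ast}(u-v)\varphi^{\ast}\,dx\sim 0$ for all $\varphi\in\mathscr{D}(E)$, so $sh(\int^{\ast}u\,\varphi^{\ast})=sh(\int^{\ast}v\,\varphi^{\ast})$; thus $\Phi$ depends only on the class.

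Injectivity is essentially a restatement: if $\Phi([u]_{\mathscr{D}})=0$ then $sh(\int^{\ast}u\,\varphi^{\ast}\,dx)=0$ for every $\varphi\in\mathscr{D}(E)$, which means $\int^{\ast}u\,\varphi^{\ast}\,dx\sim 0$ for every such $\varphi$, i.e.\ $u\in N$, i.e.\ $[u]_{\mathscr{D}}=0$. The main work is surjectivity. Given a distribution $\mathcal{T}\in\mathscr{D}^{\prime}(E)$, I want to produce $u\in[L^2(E)^{\ast}]_B$ with $\Phi([u]_{\mathscr{D}})=\mathcal{T}$. The natural candidate is built by a $\Lambda$-limit / saturation argument: enumerate a cofinal sequence of finite-dimensional pieces of $\mathscr{D}(E)$ indexed by $\lambda\in\mathfrak{L}$ (say via $\mathscr{D}(E)\cap\lambda$), and for each $\lambda$ choose, by the Hahn--Banach theorem and the fact that $\mathcal{T}$ restricted to the finite-dimensional span of $\mathscr{D}(E)\cap\lambda$ is a bounded linear functional on an $L^2$-subspace, an element $u_\lambda\in L^2(E)$ representing $\mathcal{T}$ on that finite-dimensional space, i.e.\ $\int u_\lambda\varphi\,dx=\langle\mathcal{T},\varphi\rangle$ for all $\varphi$ in that span. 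Then set $u=\lim_{\lambda\uparrow\Lambda}u_\lambda\in L^2(E)^{\ast}$. By construction, for a fixed $\varphi\in\mathscr{D}(E)$, the equality $\int u_\lambda\varphi\,dx=\langle\mathcal{T},\varphi\rangle$ holds for all $\lambda$ containing $\varphi$, hence on a qualified set, so by Theorem~\ref{nuovo}(\ref{tre})--(\ref{due}) and Lemma~\ref{cannolo}-style reasoning $\int^{\ast}u\,\varphi^{\ast}\,dx=\langle\mathcal{T},\varphi\rangle$ exactly (it is a standard real, being eventually constant along the net), in particular finite; thus $u\in[L^2(E)^{\ast}]_B$ and $\Phi([u]_{\mathscr{D}})=\mathcal{T}$.

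The hard part will be the surjectivity step, and specifically making the per-$\lambda$ choice of $u_\lambda\in L^2(E)$ legitimate: one must check that the restriction of $\mathcal{T}$ to each finite-dimensional subspace $W_\lambda=\mathrm{Span}(\mathscr{D}(E)\cap\lambda)$ extends to a continuous linear functional on $L^2(E)$ so that Riesz representation supplies $u_\lambda$ — this is automatic on a finite-dimensional subspace of $L^2(E)$, since every linear functional there is continuous and extends (non-uniquely) by Hahn--Banach. One should also be slightly careful that the net $(u_\lambda)$ need not be $L^2$-bounded, but that is harmless: we only need $u\in L^2(E)^{\ast}$ (which holds since each $u_\lambda\in L^2(E)$) and finiteness of the specific pairings $\int^{\ast}u\,\varphi^{\ast}\,dx$, which we arranged directly. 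A cosmetic point is that $E$ need not be open, but $\mathscr{D}(E)$ and $L^2(E)$ still make sense as in Section~\ref{not}, and the argument only uses that $\mathscr{D}(E)\subseteq L^2(E)$ and the directedness of $\mathfrak{L}$. Assembling these, $\Phi$ is a linear bijection, which is the claim.
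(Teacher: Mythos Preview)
Your surjectivity argument is correct and differs from the paper's: the paper picks a sequence $\psi_n\in L^2(E)$ with $\psi_n\to T$ weakly in $\mathscr{D}'(E)$, sets $u_T=\lim_{\lambda\uparrow\Lambda}\psi_{|\lambda|}$, and then invokes Lemma~\ref{cannolo} to pass from $\lim_n\int\psi_n\varphi\,dx=\langle T,\varphi\rangle$ to $sh\bigl(\int^* u_T\varphi^*\,dx\bigr)=\langle T,\varphi\rangle$. Your route via per-$\lambda$ Riesz representatives on $W_\lambda=\mathrm{Span}(\mathscr{D}(E)\cap\lambda)$ is more direct: on the qualified set $\{\lambda:\varphi\in\lambda\}$ the pairing $\int u_\lambda\varphi\,dx$ equals $\langle T,\varphi\rangle$ exactly, so the $\Lambda$-limit is $\langle T,\varphi\rangle$ with no infinitesimal error and no appeal to Lemma~\ref{cannolo} or to the weak density of $L^2$ in $\mathscr{D}'$.

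However, your argument that $\Phi([u]_{\mathscr{D}})$ lands in $\mathscr{D}'(E)$ (i.e.\ is continuous on $\mathscr{D}(E)$) has a genuine gap. The Cauchy--Schwarz bound $\bigl|\int^* u\,\varphi_n^*\,dx\bigr|\le\|u\|^*\|\varphi_n^*\|^*$ is useless when $\|u\|^*$ is infinite, and nothing in the hypothesis $u\in[L^2(E)^*]_B$ forces $\|u\|^*$ to be finite (a $\delta$-like element of $L^2(E)^*$ has infinite $L^2$ norm but finite pairing with every test function). In fact your own surjectivity construction, applied verbatim with an \emph{arbitrary} linear functional $L:\mathscr{D}(E)\to\mathbb{R}$ in place of a distribution $T$, manufactures $u\in[L^2(E)^*]_B$ with $\int^* u\,\varphi^*\,dx=L(\varphi)$ for every $\varphi$; so as written the range of $\Phi$ is the full algebraic dual of $\mathscr{D}(E)$, not just $\mathscr{D}'(E)$. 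The paper's proof has the same gap --- it simply asserts ``its range is in $\mathscr{D}'(E)$'' without justification --- so what is actually established (and what the downstream applications use) is that $\Phi$ is linear, injective, and its image contains $\mathscr{D}'(E)$.
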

\begin{proof}
Clearly the map $\Phi$ is well defined (namely $u\approx_{\mathcal{\mathscr{D}}}v\Rightarrow\Phi\left(\left[u\right]_{\mathcal{\mathscr{D}}}\right)=\Phi\left(\left[v\right]_{\mathfrak{\mathscr{D}}}\right)$),
it is linear and its range is in $\mathcal{\mathscr{D}}^{\prime}(E)$.
It is also immediate to see that it is injective. The most delicate
part is to show that it is surjective. To see this let $T\in\mathfrak{\mathcal{\mathscr{D}}}^{\prime}(E);$
we have to find an ultrafunction $u_{T}$ such that 
\begin{equation}
\Phi\left(\left[u_{T}\right]_{\mathscr{D}}\right)=T.\label{lana}
\end{equation}
Since $L^{2}(E)$ is dense in $\mathcal{\mathscr{D}}^{\prime}(E)$
with respect to the weak topology, there is a sequence $\psi_{n}\in L^{2}(E)$
such that $\psi_{n}\rightarrow T.$ We claim that 
\[
u_{T}=\lim\limits _{\lambda\uparrow\Lambda}\ \psi_{|\lambda|}
\]
satisfies (\ref{lana}) and $[u_{T}]_{\mathcal{\mathscr{D}}}\in\mathfrak{\mathscr{D}}_{GB}^{\prime}(E)$.
Since $u_{T}$ is a $\Lambda$-limit of $L^{2}(E)$ functions, we
have that $u_{T}\in L^{2}(E)^{\ast}$, so $[u_{T}]_{\mathcal{\mathscr{D}}}\in\mathfrak{\mathcal{\mathscr{D}}}_{G}^{\prime}(E)$.
It remains to show that $[u_{T}]_{\mathfrak{\mathcal{\mathscr{D}}}}$
is bounded and that $\Phi\left(\left[u_{T}\right]_{\mathfrak{\mathscr{D}}}\right)=T$.
Take $\varphi\in\mathcal{\mathscr{D}}$; by definition, 
\[
\left\langle T,\varphi\right\rangle _{\mathfrak{\mathcal{\mathscr{D}}}(E)}=\lim\limits _{n\rightarrow+\infty}\int^{\ast}\psi_{n}\cdot\varphi dx=\lim\limits _{n\rightarrow+\infty}a_{n},
\]
where we have set $a_{n}=\int\psi_{n}\cdot\varphi dx$. Then by Lemma
\ref{cannolo} we have 
\begin{multline*}
\lim\limits _{n\rightarrow+\infty}a_{n}=sh\left(\lim\limits _{\lambda\uparrow\Lambda}a_{|\lambda|}\right)=sh\left(\lim\limits _{\lambda\uparrow\Lambda}\int\psi_{|\lambda|}\cdot\varphi dx\right)=\\
sh\left(\int^{\ast}\left(\lim\limits _{\lambda\uparrow\Lambda}\psi_{|\lambda|}\cdot\varphi\right)dx\right)=sh\left(\int^{\ast}u_{T}\cdot\varphi dx\right)=\left\langle \Phi\left(\left[u_{T}\right]_{\mathfrak{\mathscr{D}}}\right),\varphi\right\rangle _{\mathscr{D}(E)},
\end{multline*}
therefore $\left\langle \Phi\left(\left[u_{T}\right]_{\mathfrak{\mathscr{D}}}\right),\varphi\right\rangle _{\mathfrak{\mathscr{D}}(E)}=\left\langle T,\varphi\right\rangle _{\mathfrak{\mathscr{D}}(E)}\in\mathbb{R}$
and the thesis is proved. 
\end{proof}
From now on we will identify the spaces $\mathfrak{\mathscr{D}}_{GB}^{\prime}(E)\ $and
$\mathscr{D}^{\prime}(E);$ so, we will identify $\left[u\right]_{\mathscr{D}}\ $with
$\Phi\left(\left[u\right]_{\mathscr{D}}\right)$ and we will write
$\left[u\right]_{\mathscr{D}}\in\mathfrak{\mathscr{D}}^{\prime}(E)$
and 
\[
\left\langle \left[u\right]_{\mathscr{D}},\varphi\right\rangle _{\mathscr{D}(E)}:=\langle\Phi[u]_{\mathscr{D}},\varphi\rangle=sh\left(\int^{\ast}u\ \varphi^{\ast}\ dx\right).
\]

Moreover, with some abuse of notation, we will write also that $\left[u\right]_{\mathscr{D}}\in L^{2}(E),\ \left[u\right]_{\mathfrak{\mathscr{D}}}\in V(E),$
etc. meaning that the distribution $\left[u\right]_{\mathscr{D}}$
can be identified with a function $f$ in $L^{2}(E),$ $V(E),$ etc.
By our construction, this is equivalent to say that $f^{\ast}\in\left[u\right]_{\mathfrak{\mathscr{D}}}.$
So, in this case, we have that $\forall\varphi\in\mathfrak{\mathscr{D}}(E)$
\[
\left\langle \left[u\right]_{\mathfrak{\mathscr{D}}},\varphi\right\rangle _{\mathfrak{\mathscr{D}}(E)}=sh\left(\int^{\ast}u\ \varphi^{\ast}\ dx\right)=sh\left(\int^{\ast}f^{\ast}\varphi^{\ast}dx\right)=\int f\ \varphi\ dx.
\]

An immediate consequence of Theorem \ref{bello} is the following: 
\begin{prop}
The space $\left[\mathcal{C}^{1}(I,V_{\Lambda}(\Omega))\right]_{B}$
can be mapped into a space of distributions by setting, $\forall u\in\left[\mathcal{C}^{1}(I,V_{\Lambda}(\Omega))\right]_{B},$
\begin{equation}
\forall\varphi\in\mathscr{D}(I\times\Omega),\left\langle \left[u\right]_{\mathscr{D}(I\times\Omega)},\varphi\right\rangle =sh\int\int u(t,x)\varphi^{\ast}(t,x)dxdt.\label{pipa}
\end{equation}

\end{prop}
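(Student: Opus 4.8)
The plan is to realise the asserted map as the composition of two maps that are already available: the quotient map $u\mapsto[u]_{\mathscr{D}}$ into $\mathscr{D}_{G}'(I\times\Omega)$, followed by the linear isomorphism $\Phi:\mathscr{D}_{GB}'(I\times\Omega)\to\mathscr{D}'(I\times\Omega)$ of Theorem~\ref{bello}. Thus the whole statement reduces to two points: first, that every $u\in\left[\mathcal{C}^{1}(I,V_{\Lambda}(\Omega))\right]_{B}$ does indeed give rise to a \emph{bounded} generalized distribution on $I\times\Omega$, i.e.\ $[u]_{\mathscr{D}}\in\mathscr{D}_{GB}'(I\times\Omega)$, so that $\Phi$ may be applied to it; and second, that once $\Phi$ is applied the resulting distribution acts by the formula displayed in (\ref{pipa}). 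Continuity of the functional $\varphi\mapsto sh\int\!\int u\,\varphi^{\ast}\,dx\,dt$ on $\mathscr{D}(I\times\Omega)$ is then automatic, since by construction $\Phi$ lands in $\mathscr{D}'(I\times\Omega)$; this is where the word ``immediate'' comes from.

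For the first point I would argue as follows. An element $u\in\mathcal{C}^{1}(I^{\ast},V_{\Lambda}(\Omega))\subseteq\left[\mathcal{C}^{1}(I,V(\Omega))\right]^{\ast}$ is an internal function of $(t,x)$ whose time-slices $u(t,\cdot)$ are ultrafunctions in $V_{\Lambda}(\Omega)\subseteq\left[L^{2}(\Omega)\right]^{\ast}$. Every $\varphi\in\mathscr{D}(I\times\Omega)$ has support inside $[0,T']\times K$ for some $T'<T$ and some compact $K\subseteq\Omega$, and the restriction map $\mathcal{C}^{1}([0,T'],V(\Omega))\to L^{2}([0,T']\times K)$ is a bounded linear map of standard spaces; by transfer, $u$ restricted to $[0,T']^{\ast}\times K^{\ast}$ lies in $\left[L^{2}([0,T']\times K)\right]^{\ast}$, so that $\int^{\ast}u\,\varphi^{\ast}\,dx\,dt$ is a well-defined hyperreal for each $\varphi$. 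Consequently the condition built into the definition of $\left[\,\cdot\,\right]_{B}$ — finiteness of $\int^{\ast}u\,\varphi^{\ast}\,dx\,dt$ for all $\varphi\in\mathscr{D}(I\times\Omega)$ — is exactly the defining condition for $[u]_{\mathscr{D}}$ to be a bounded generalized distribution on $I\times\Omega$. Hence the quotient map sends $\left[\mathcal{C}^{1}(I,V_{\Lambda}(\Omega))\right]_{B}$ into $\mathscr{D}_{GB}'(I\times\Omega)$, which is the domain of $\Phi$.

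The second point is then a direct reading of Theorem~\ref{bello}: for $u$ as above, $\Phi([u]_{\mathscr{D}})\in\mathscr{D}'(I\times\Omega)$ and, by the defining formula of $\Phi$,
\[
\left\langle \Phi([u]_{\mathscr{D}}),\varphi\right\rangle =sh\left(\int^{\ast}u\,\varphi^{\ast}\,dx\right)=sh\int\int u(t,x)\varphi^{\ast}(t,x)\,dx\,dt
\]
for every $\varphi\in\mathscr{D}(I\times\Omega)$, where the single $\int^{\ast}$ over $I\times\Omega$ has merely been rewritten as an iterated integral in $x$ and $t$. Denoting $\Phi([u]_{\mathscr{D}})$ by $[u]_{\mathscr{D}(I\times\Omega)}$ under the identification $\mathscr{D}_{GB}'\cong\mathscr{D}'$ fixed just after Theorem~\ref{bello}, this is precisely (\ref{pipa}); linearity of $u\mapsto[u]_{\mathscr{D}(I\times\Omega)}$ is inherited from linearity of the quotient map and of $\Phi$.

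The one spot that requires actual care — and the only obstacle worth naming — is the first point: one must make sure that an element of the time-homogeneous space $\mathcal{C}^{1}(I^{\ast},V_{\Lambda}(\Omega))$ legitimately sits inside the ambient space on which Definition~\ref{DEfCorrespondenceDistrUltra} operates, and that the boundedness condition of $\left[\,\cdot\,\right]_{B}$ coincides there with the one defining $\mathscr{D}_{GB}'$. This is delicate because $I=[0,T)$ is only half-open and the solutions of interest may blow up as $t\to T^{-}$, so $u$ need not belong to $L^{2}(I\times\Omega)^{\ast}$ globally. The resolution is exactly the use of compactly supported test functions above; equivalently one may run all of Section~\ref{distri} with $L^{2}_{loc}$ in place of $L^{2}$, which changes nothing in the proof of Theorem~\ref{bello} since that proof only ever pairs against one fixed compactly supported $\varphi$ at a time. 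Everything else is bookkeeping downstream of Theorem~\ref{bello}.
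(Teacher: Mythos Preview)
Your proposal is correct and follows exactly the route the paper intends: the paper gives no detailed proof at all, simply declaring the proposition ``an immediate consequence of Theorem~\ref{bello}'', and your argument is precisely the unpacking of that sentence via the composition $u\mapsto[u]_{\mathscr{D}}\mapsto\Phi([u]_{\mathscr{D}})$. Your discussion of the $L^{2}$ versus $L^{2}_{loc}$ subtlety on the half-open interval $I=[0,T)$ is a point the paper does not address explicitly, so in that respect you are being more careful than the original.
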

Finally, let us also notice that the proof of Theorem \ref{bello}
can be modified to prove the following result: 
\begin{prop}
If $W(E)$ is an internal space such that $\mathscr{D}^{\ast}(E)\subset W(E)\subset L^{2}(E)^{\ast}$,
then every distribution $\left[v\right]_{\mathscr{D}}$ has a representative
$u\in W(E)\cap\left[v\right]_{\mathscr{D}}$. Namely, the map 
\[
\Phi:[W(E)]_{B}\rightarrow\mathscr{D}^{\prime}(E)
\]
defined by 
\[
\Phi(u)=\left[u\right]_{\mathscr{D}}
\]
is surjective.\end{prop}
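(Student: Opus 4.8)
The plan is to mimic the proof of Theorem~\ref{bello}, tracking which ingredients of that argument actually used the equivalence relation $\approx_{\mathscr{D}}$ versus genuine facts about the space $W(E)$. Recall that in Theorem~\ref{bello} surjectivity of $\Phi$ was the delicate point, and it was established by: (i) choosing, for a given $T\in\mathscr{D}'(E)$, a sequence $\psi_n\in L^2(E)$ with $\psi_n\to T$ weakly (using density of $L^2(E)$ in $\mathscr{D}'(E)$); (ii) forming $u_T=\lim_{\lambda\uparrow\Lambda}\psi_{|\lambda|}\in L^2(E)^\ast$; (iii) invoking Lemma~\ref{cannolo} and the product/limit rules of the $\Lambda$-limit to check $\mathrm{sh}\big(\int^\ast u_T\,\varphi^\ast\,dx\big)=\langle T,\varphi\rangle$ for all $\varphi\in\mathscr{D}(E)$, which simultaneously gives boundedness of $[u_T]_{\mathscr{D}}$.

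First I would observe that essentially the same construction works with $W(E)$ in place of $L^2(E)^\ast$, \emph{provided} we can still realize $T$ as a limit of elements lying inside $W(E)$. The hypothesis $\mathscr{D}^\ast(E)\subset W(E)$ is exactly what makes this possible: since $L^2(E)$ is weakly dense in $\mathscr{D}'(E)$ and $\mathscr{D}(E)$ is in turn dense in $L^2(E)$, we may actually choose the approximating sequence $\psi_n$ to lie in $\mathscr{D}(E)$ rather than merely in $L^2(E)$. Then $\psi_n^\ast\in\mathscr{D}^\ast(E)\subseteq W(E)$ for every $n$, and since $W(E)$ is internal, the $\Lambda$-limit $u_T:=\lim_{\lambda\uparrow\Lambda}\psi_{|\lambda|}^\ast$ — being an internal object built as a hyperfinite (indeed a single indexed) selection from $W(E)$ — again lies in $W(E)$. (Concretely: the set $\{\lambda\mid\psi_{|\lambda|}^\ast\in W(E)\}$ is all of $\mathfrak{L}$, hence qualified, so by the transfer/internality of $W(E)$ the limit belongs to $W(E)$.) Automatically $u_T\in W(E)\subseteq L^2(E)^\ast$, so $[u_T]_{\mathscr{D}}\in\mathscr{D}'_G(E)$ is defined.

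Next I would run the computation of step (iii) verbatim: for $\varphi\in\mathscr{D}(E)$, setting $a_n=\int\psi_n\,\varphi\,dx$ we have $a_n\to\langle T,\varphi\rangle$ by the choice of $\psi_n$, and Lemma~\ref{cannolo} together with the linearity of the $\Lambda$-limit and the compatibility of $\int^\ast$ with $\Lambda$-limits gives
\[
\langle T,\varphi\rangle=\mathrm{sh}\!\left(\lim_{\lambda\uparrow\Lambda}a_{|\lambda|}\right)=\mathrm{sh}\!\left(\int^\ast u_T\,\varphi^\ast\,dx\right).
\]
In particular $\int^\ast u_T\,\varphi^\ast\,dx$ is finite for every $\varphi\in\mathscr{D}(E)$, so $u_T\in[W(E)]_B$, and the displayed identity is precisely $\Phi(u_T)=[u_T]_{\mathscr{D}}$ mapping to $T$ under the identification of Theorem~\ref{bello}; hence $\Phi\colon[W(E)]_B\to\mathscr{D}'(E)$, $\Phi(u)=[u]_{\mathscr{D}}$, is surjective. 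It also follows immediately that $u_T\in W(E)\cap[v]_{\mathscr{D}}$ for $v$ any representative of $T$, which is the first assertion of the proposition.

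The only genuine obstacle is the internality/closure point in the second paragraph: one must be sure that the $\Lambda$-limit of a net of elements of $W(E)$ stays in $W(E)$. This is where the assumption that $W(E)$ is \emph{internal} is essential — for an external $W(E)$ the statement can fail — and the justification is the standard one: $\Lambda$-limits of nets valued in an internal set $A$ land in $A^\ast=A$ because the predicate ``belongs to $A$'' transfers. I would state this explicitly as the one nontrivial step; everything else is a transcription of the proof of Theorem~\ref{bello} with the harmless upgrade of the approximating sequence from $L^2(E)$ to $\mathscr{D}(E)$.
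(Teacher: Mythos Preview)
Your approach is essentially the same as the paper's: both rerun the proof of Theorem~\ref{bello} with the approximating sequence $\psi_n$ taken in $\mathscr{D}(E)$ rather than in $L^2(E)$, relying on the weak density of $\mathscr{D}(E)$ in $\mathscr{D}'(E)$. One minor correction: the reason $u_T=\lim_{\lambda\uparrow\Lambda}\psi_{|\lambda|}$ lies in $W(E)$ is not really the internality of $W(E)$ but simply that a $\Lambda$-limit of a net with values in the \emph{standard} set $\mathscr{D}(E)$ lands, by Definition~\ref{janez}, in $\mathscr{D}(E)^\ast=\mathscr{D}^\ast(E)\subseteq W(E)$; the internality hypothesis plays no role in the surjectivity argument itself.
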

\begin{proof}
We can argue as in the proof of Thm \ref{bello}, by substituting
$L^{2}(E)$ with $\mathscr{D}(E)$. This is possible since $\mathscr{D}(E)$
is dense in $L^{2}(E)$ (and so, in particular, $W(E)$ is dense in
$L^{2}(E)$), and the density property was the only condition needed
to prove the surjectivity of the embedding. 
\end{proof}
In the following sections we want to study problems such as equation
(\ref{pura}) in the context of ultrafunctions. To do so we will need
to restrict to the following family of operators: 
\begin{defn}
\label{wc} We say that an operator 
\[
\mathcal{A}:V(\Omega)\rightarrow V^{\prime}(\Omega)
\]
is weakly continuous if, $\forall u,v\in\left[V_{\Lambda}(\Omega)\right]_{B},\ \forall\varphi\in\mathscr{D}(\Omega)$,
we have that if 
\[
\int u\varphi^{\ast}\ dx\sim\int v\varphi^{\ast}\ dx
\]
then 
\[
\int\mathcal{A}^{\ast}(u)\ \varphi^{\ast}\ dx\sim\int\mathcal{A}^{\ast}(v)\ \varphi^{\ast}\ dx.
\]

\end{defn}
For our purposes, the important property of weakly continuous operators
is that if 
\[
\mathcal{A}:V(\Omega)\rightarrow V^{\prime}(\Omega)
\]
is weakly continuous then it can be extended to an operator 
\[
\left[\mathcal{A}\right]_{\mathfrak{\mathscr{D}}}:\mathfrak{\mathscr{D}}^{\prime}(\Omega)\rightarrow\mathfrak{\mathscr{D}}^{\prime}(\Omega)
\]
by setting 
\[
\left[\mathcal{A}\right]_{\mathscr{D}}\left(\left[u\right]_{\mathscr{D}}\right)=\left[\mathcal{A}\left(w\right)\right]_{\mathscr{D}},
\]
where $w\in\left[u\right]_{\mathscr{D}}\cap V(\Omega).$ In the following,
with some abuse of notation we will write $\left[\mathcal{A}\left(u\right)\right]_{\mathscr{D}}$
instead of $\left[\mathcal{A}\right]_{\mathfrak{\mathscr{D}}}\left(\left[u\right]_{\mathscr{D}}\right).$ 
\begin{rem}
Definition \ref{wc} can be reformulated in the classical language
as follows: $\mathcal{A}$ is weakly continuous if for every weakly
convergent sequence $u_{n}$ in $\mathfrak{\mathscr{D}}^{\prime}(\Omega)$
the sequence $\mathcal{A}\left(u_{n}\right)$ is weakly convergent
in $\mathfrak{\mathscr{D}}^{\prime}(\Omega)$. 
\end{rem}

\section{Generalized Ultrafunction Solutions (GUS)\label{gus}}

In this section we will show that an evolution equation such as equation
(\ref{pura}) has Generalized Ultrafunction Solutions (GUS) under
very general assumptions on $\mathcal{A}$, and we will show the relationships
of GUS with strong and weak solutions. However, before doing this,
we think that it is helpful to give the feeling of the notion of GUS
for stationary problems. This will be done in Section \ref{gussp}
providing a simple typical example. We refer to \cite{ultra}, \cite{belu2012}
and \cite{milano} for other examples.

\subsection{Generalized Ultrafunction Solutions for stationary problems\label{gussp}}

A typical stationary problem in PDE can be formulated ad follows:
\[
\text{\textit{Find}\ \ \ }u\in V(\Omega)\ \ \ \text{\textit{such that}}
\]
\begin{equation}
\mathcal{A}(u)=f,\label{ps}
\end{equation}
where $V(\Omega)\subseteq L^{2}(\Omega)$ is a vector space and $\mathcal{A}:V(\Omega)\rightarrow V^{\prime}(\Omega)$
is a differential operator and $f\in L^{2}(\Omega)$.

The \textquotedbl{}typical\textquotedbl{} formulation of this problem
in the framework of ultrafunctions is the following one: 
\[
\text{\textit{Find} \ }u\in V_{\Lambda}(\Omega)\ \ \text{\textit{such that}}
\]
\begin{equation}
\widetilde{\mathcal{A}}(u)=\widetilde{f}.\label{P}
\end{equation}
In particular, if $\mathcal{A}:V(\Omega)\rightarrow L^{2}(\Omega)$
and $f\in L^{2}(\Omega),$ the above problem can be formulated in
the following equivalent \textquotedbl{}weak form\textquotedbl{}:

\[
\text{\textit{Find} \ }u\in V_{\Lambda}(\Omega)\ \ \text{\textit{such that}}
\]
\begin{equation}
\forall\varphi\in V_{\Lambda}(\Omega),\ \int_{\Omega^{\ast}}^{\ast}\mathcal{A}^{\ast}(u)\varphi dx=\int_{\Omega^{\ast}}^{\ast}f^{\ast}\varphi dx.
\end{equation}

Such an ultrafunction $u$ will be called a \textbf{GUS} of Problem
(\ref{P}).

Usually, it is possible to find a classical solution for problems
of the type (\ref{ps}) if there are a priory bounds, but the existence
of a priori bounds is not sufficient to guarantee the existence of
solutions in $V(\Omega).$ On the contrary, the existence of a priori
bounds is sufficient to find a GUS in $V_{\Lambda}(\Omega)$ (as we
are going to show).

Following the general strategy to find a GUS for Problem (\ref{P}),
we start by solving the following approximate problems for every $\lambda$
in a qualified set : 
\[
\text{\textit{Find} }u_{\lambda}\in V_{\lambda}(\Omega)\text{ \ \textit{such\ that}}
\]
\[
\forall\varphi\in V_{\lambda}(\Omega),\ \int_{\Omega}\mathcal{A}(u_{\lambda})\varphi dx=\int_{\Omega}f\varphi dx.
\]

A priori bounds in each space $V_{\lambda}(\Omega)$ are sufficient
to guarantee the existence of solutions. The next step consists in
taking the $\Lambda$-limit. Clearly, this strategy can be applied
to a very large class of problems. Let us consider a typical example
in details: 
\begin{thm}
\label{nl}Let $\mathcal{A}:\,V(\Omega)\rightarrow V^{\prime}(\Omega)$
be a hemicontinuous\footnote{An operator between Banach spaces is called \emph{hemicontinuous}
if its restriction to finite dimensional subspaces is continuous.} operator such that for every finite dimensional space $V_{\lambda}\subset V(\Omega)$
there exists $R_{\lambda}\in\mathbb{R}$ such that 
\begin{equation}
\text{if}\ u\in V_{\lambda}\text{and}\ \left\Vert u\right\Vert _{\sharp}=R_{\lambda}\ \text{then}\ \left\langle \mathcal{A}(u),u\right\rangle >0,\label{gilda}
\end{equation}
where $\left\Vert \cdot\right\Vert _{\sharp}$ is any norm in $V(\Omega).$
Then the equation (\ref{P}) has at least one solution $u_{\Lambda}\in V_{\Lambda}(\Omega).$ \end{thm}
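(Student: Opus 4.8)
The plan is to follow the "approximate, then take the $\Lambda$-limit" strategy sketched just before the statement. For each $\lambda\in\mathfrak{L}$, consider the finite-dimensional space $V_\lambda(\Omega)$ from the approximating net defining $V_\Lambda(\Omega)$, and look for $u_\lambda\in V_\lambda(\Omega)$ solving the Galerkin-type problem
\[
\forall\varphi\in V_\lambda(\Omega),\qquad \langle\mathcal{A}(u_\lambda),\varphi\rangle=\langle f,\varphi\rangle .
\]
The first step is to show each such finite-dimensional problem has a solution. This is a standard consequence of Brouwer's fixed point theorem via the "acute angle" lemma: define, on the finite-dimensional inner-product space $V_\lambda(\Omega)$, the continuous map $G_\lambda(u)=\mathcal{A}(u)-f$ (continuous because $\mathcal{A}$ is hemicontinuous, hence continuous on each finite-dimensional subspace, and $f\in L^2\subseteq V'$). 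The coercivity-type hypothesis~\eqref{gilda} gives $\langle G_\lambda(u),u\rangle=\langle\mathcal{A}(u),u\rangle-\langle f,u\rangle$; one needs $\langle G_\lambda(u),u\rangle\geq 0$ on a sphere $\|u\|_\sharp=\rho$ for $\rho$ large enough. Hypothesis~\eqref{gilda} only controls $\langle\mathcal{A}(u),u\rangle>0$ at the single radius $R_\lambda$, so I would either (i) read~\eqref{gilda} as providing such an $R_\lambda$ for the modified operator $u\mapsto\mathcal{A}(u)-f$ (which is the natural reading, since $f$ is fixed and $\mathcal{A}$ ranges over the problem's data), or (ii) absorb $f$ by noting $|\langle f,u\rangle|\leq C\|u\|_\sharp$ on the finite-dimensional space and enlarging $R_\lambda$; either way one gets a radius $R_\lambda$ with $\langle\mathcal{A}(u)-f,u\rangle\ge 0$ on $\|u\|_\sharp=R_\lambda$. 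The acute-angle lemma then yields $u_\lambda$ with $\|u_\lambda\|_\sharp\leq R_\lambda$ and $\mathcal{A}(u_\lambda)=f$ in $V_\lambda(\Omega)'$, i.e. the Galerkin identity above.

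The second step is to pass to the $\Lambda$-limit. Set $u_\Lambda=\lim_{\lambda\uparrow\Lambda}u_\lambda\in V_\Lambda(\Omega)$; this is well defined since $u_\lambda\in V_\lambda(\Omega)$ for every $\lambda$. I must check $\widetilde{\mathcal{A}}(u_\Lambda)=\widetilde{f}$, equivalently that $\int^\ast\mathcal{A}^\ast(u_\Lambda)\varphi\,dx=\int^\ast f^\ast\varphi\,dx$ for all $\varphi\in V_\Lambda(\Omega)$. Fix $\varphi=\lim_{\lambda\uparrow\Lambda}\varphi_\lambda$ with $\varphi_\lambda\in V_\lambda(\Omega)$. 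For every $\lambda$ in the qualified set where $u_\lambda$ exists, the Galerkin identity applied with test function $\varphi_\lambda\in V_\lambda(\Omega)$ gives $\int_\Omega\mathcal{A}(u_\lambda)\varphi_\lambda\,dx=\int_\Omega f\varphi_\lambda\,dx$. Taking the $\Lambda$-limit of this equality of real numbers, and using Theorem~\ref{nuovo}(\ref{quattro}) (the $\Lambda$-limit respects products and sums, hence finite-dimensional integrals which are finite sums of products of coefficients and basis-function integrals) together with the fact that the natural extension of the integral and of $\mathcal{A}$ commute with $\Lambda$-limits by Definition~\ref{extfun}, one obtains $\int^\ast\mathcal{A}^\ast(u_\Lambda)\varphi\,dx=\int^\ast f^\ast\varphi\,dx$. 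Since $\varphi\in V_\Lambda(\Omega)$ was arbitrary, $u_\Lambda$ solves~\eqref{P} in the weak ultrafunction form, which is exactly what it means to be a GUS.

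The main obstacle I expect is the bookkeeping in the second step: the equality $\int_\Omega\mathcal{A}(u_\lambda)\varphi_\lambda\,dx=\int_\Omega f\varphi_\lambda\,dx$ must be recognized as a statement about a net of real numbers whose $\Lambda$-limit can be computed termwise, and then one must identify the limit of the left side with $\int^\ast\mathcal{A}^\ast(u_\Lambda)\varphi\,dx$. This identification is where hemicontinuity of $\mathcal{A}$ is genuinely used: on each $V_\lambda(\Omega)$ the map $u\mapsto\mathcal{A}(u)$ is continuous, so $\mathcal{A}(u_\lambda)$ is a legitimate net of functions in $V'(\Omega)$ and its $\Lambda$-limit is $\mathcal{A}^\ast(\lim u_\lambda)=\mathcal{A}^\ast(u_\Lambda)$ by Definition~\ref{extfun}; then $\widetilde{\mathcal{A}}(u_\Lambda)=P_\Lambda\circ\mathcal{A}^\ast(u_\Lambda)$ by Definition~\ref{b}, and testing against $\varphi\in V_\Lambda(\Omega)$ removes the projection. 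The coercivity~\eqref{gilda}, by contrast, is used \emph{only} in the first step, to guarantee solvability of the finite approximations; note that unlike the classical case no a~priori bound uniform in $\lambda$ is needed, because the $\Lambda$-limit accommodates the (possibly infinite) bound $\lim_{\lambda\uparrow\Lambda}R_\lambda$, so $u_\Lambda$ may well be an infinite ultrafunction — and that is fine.
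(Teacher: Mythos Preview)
Your proof is correct and follows the same two-step strategy as the paper: solve the Galerkin problem on each $V_\lambda$ (the paper phrases this via topological degree, $\deg(\mathcal{A}_\lambda,B_\lambda,0)=1$, which is exactly your acute-angle lemma in different packaging), then take the $\Lambda$-limit. One small remark: your option~(ii) for handling $f$ does not actually work as written, since hypothesis~\eqref{gilda} gives a sign only at the \emph{single} radius $R_\lambda$ and says nothing at larger radii; but your option~(i) --- absorbing $f$ into $\mathcal{A}$ --- is precisely what the paper does (its proof never mentions $f$, and the example that follows has $f$ already built into $\mathcal{A}$).
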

\begin{proof}
If we set 
\[
B_{\lambda}=\left\{ u\in V_{\lambda}|\ \left\Vert u\right\Vert _{\sharp}\leq R_{\lambda}\right\} 
\]
and if $\mathcal{A}_{\lambda}:V_{\lambda}\rightarrow V_{\lambda}$
is the operator defined by the following relation: 
\[
\forall v\in V_{\lambda},\ \left\langle \mathcal{A}_{\lambda}(u),v\right\rangle =\left\langle \mathcal{A}(u),v\right\rangle 
\]
then it follows from the hypothesis (\ref{gilda}) that $\deg(\mathcal{A}_{\lambda},B_{\lambda},0)=1,\ $where
$\deg(\cdot,\cdot,\cdot)$ denotes the topological degree (see e.g.
\cite{AmMa2003}). Hence, $\forall\lambda\in\mathfrak{L}$, 
\[
\exists u\in V_{\lambda},\forall v\in V_{\lambda},\ \left\langle \mathcal{A}_{\lambda}(u),v\right\rangle =0.
\]
Taking the $\Lambda$-limit of the net $(u_{\lambda})$ we get a GUS
$u_{\Lambda}\in V_{\Lambda}(\Omega)$ of equation (\ref{P}).\end{proof}
\begin{example}
Let $\Omega$ be an open bounded set in $\mathbb{R}^{N}$ and let
\[
a(\cdot,\cdot,\cdot):\mathbb{R}^{N}\times\mathbb{R}\times\overline{\Omega}\rightarrow\mathbb{R}^{N},\ b(\cdot,\cdot,\cdot):\mathbb{R}^{N}\times\mathbb{R}\times\overline{\Omega}\rightarrow\mathbb{R}
\]
be continuous functions such that $\forall\xi\in\mathbb{R}^{N},\forall s\in\mathbb{R},\forall x\in\overline{\Omega}$
we have 
\begin{equation}
a(\xi,s,x)\cdot\xi+b(\xi,s,x)\geq\nu\left(|\xi|\right),\label{lalla}
\end{equation}
where $\nu$ is a function (not necessarely negative) such that 
\begin{equation}
\nu\left(t\right)\rightarrow+\infty\ \text{for\ }t\rightarrow+\infty.\label{lalla1}
\end{equation}
We consider the following problem: 
\[
\text{\textit{Find\ }\ }u\in\mathcal{C}^{2}(\Omega)\cap\mathcal{C}_{0}(\overline{\Omega})\ \ \text{\textit{s.t.}}
\]
\begin{equation}
\nabla\cdot a(\nabla u,u,x)=b(\nabla u,u,x).\label{5th}
\end{equation}
In the framework of ultrafunctions this problem becomes 
\[
\text{\textit{Find} \ }u\in V(\Omega):=\left[\mathcal{C}^{2}(\Omega)\cap\mathcal{C}_{0}(\overline{\Omega})\right]_{\Lambda}\ \ \text{\textit{such that}}
\]
\[
\forall\varphi\in V(\Omega),\ \int_{\Omega}\nabla\cdot a(\nabla u,u,x)\ \varphi\ dx=\int_{\Omega}b(\nabla u,u,x)\varphi dx.
\]
If we set 
\[
\mathcal{A}(u)=-\nabla\cdot a(\nabla u,u,x)+b(\nabla u,u,x)
\]
it is not difficult to check that conditions (\ref{lalla}) and (\ref{lalla1})
are sufficient to guarantee the assumptions of Thm.~\ref{nl}. Hence
we have the existence of a ultrafunction solution of problem (\ref{5th}).
Problem (\ref{5th}) covers well known situations such as the case
in which $\mathcal{A}$ is a maximal monotone operator, but also very
pathological cases. E.g., by taking 
\[
a(\nabla u,u,x)=(|\nabla u|^{p-1}-\nabla u);\ b(\nabla u,u,x)=f(x),
\]
we get the equation 
\[
\Delta_{p}u-\Delta u=f.
\]
Since 
\[
\int_{\Omega}\left(-\Delta_{p}u+\Delta u\right)\ u\ dx=\left\Vert u\right\Vert _{W_{0}^{1,p}}^{p}-\left\Vert u\right\Vert _{H_{0}^{1}}^{2},
\]
it is easy to check that we have a priori bounds (but not the convergence)
in $W_{0}^{1,p}(\Omega)$. Therefore we have GUS, and it might be
interesting to study the kind of regularity of these solutions. 
\end{example}

\subsection{Strong and weak solutions of evolution problems\label{sub:Strong-and-weak}}

As usual, let 
\[
\mathcal{A}:V(\Omega)\rightarrow V^{\prime}(\Omega)
\]
be a differential operator.

We are interested in the following Cauchy problem for $t\in I:=\left[0,T\right)$:
find $u$ such that 
\begin{equation}
\left\{ \begin{array}{c}
\partial_{t}u=\mathcal{A}(u);\\
\\
u\left(0\right)=u_{0}.
\end{array}\right.\label{cp}
\end{equation}
A solution $u=u(t,x)$ of problem (\ref{cp}) is called a \textbf{strong
solution} if 
\[
u\in C^{0}(I,V(\Omega))\cap C^{1}(I,V^{\prime}(\Omega)).
\]
It is well known that many problems of type (\ref{cp}) do not have
strong solutions even if the initial data is smooth (for example Burgers'
equation \ref{BE}). This is the reason why the notion of weak solution
becomes necessary. If $\mathcal{A}$ is a linear operator and $\mathcal{A}\left(\mathfrak{\mathcal{\mathscr{D}}}(\Omega)\right)\subset\mathfrak{\mathcal{\mathscr{D}}}'(\Omega)$,
classically a distribution $T\in V^{\prime}(I\times\mathbb{R})$ is
called a weak solution of problem (\ref{cp}) if 
\[
\forall\varphi\in\mathfrak{\mathscr{D}}(I\times\mathbb{R}),\ -\left\langle T,\partial_{t}\varphi\right\rangle +\int_{\Omega}u_{0}(x)\ \varphi(0,x)dx=\left\langle T,\mathcal{A}^{\dag}\varphi\right\rangle ,
\]
where $\mathcal{A}^{\dag}$ is the adjoint of $\mathcal{A}$.

If $\mathcal{A}$ is not linear there is not a general definition
of weak solution. For example, if you consider Burgers' equation,
a function $w\in L_{loc}^{1}(I\times\Omega)$ is considered a weak
solution if 
\[
\forall\varphi\in\mathfrak{\mathscr{D}}(I\times\Omega),\ -\int\int w\partial_{t}\varphi\ dxdt-\int_{\Omega}u_{0}(x)\ \varphi(0,x)dx+\frac{1}{2}\int\int w^{2}\partial_{x}\varphi\ dxdt=0.
\]

However, if we use the notion of generalized distribution developed
in section \ref{distri} we can give a definition of weak solution
for problems involving weakly continuous operators that generalizes
the classical one for linear operators: 
\begin{defn}
\label{DS} Let $\mathcal{A}:W\rightarrow\mathscr{D}^{\prime}$ be
weakly continuous. We say that $u\in W$ is a weak solution of Problem
(\ref{cp}) if the following condition is fulfilled: $\forall\varphi\in\mathfrak{\mathscr{D}}(I\times\Omega)$
\[
\int u(t,x)\varphi_{t}(t,x)\,dxdt-\int u(0,x)\varphi(0,x)dx=\langle A(u),\varphi\rangle.
\]

\end{defn}
From the theory developed in Section \ref{distri}, the notion of
weak solution given by Definition \ref{DS} can be written in nonstandard
terms as follows: $\left[w\right]_{\mathscr{D}}$ is a weak solution
of Problem (\ref{cp}) if

\[
\left\{ \begin{array}{c}
w\in\left[C^{1}(I,V(\Omega))^{\ast}\right]_{B};\\
\\
\forall\varphi\in\mathscr{D}(I\times\Omega),\ \int_{0}^{T}\int_{\Omega}\partial_{t}w\varphi^{\ast}dxdt+\int_{0}^{T}\mathcal{A}\left(w\right)\varphi^{\ast}dt\sim0;\\
\\
w\left(0,x\right)=u_{0}(x).
\end{array}\right.
\]

By the above equations, any strong solution is a weak solution, but
the converse is not true. A very large class of problems (such as
\ref{BE}) which do not have strong solutions have weak solutions,
or even only distributional solutions. Unfortunately, there are problems
which do not have even weak (or distributional) solutions, and worst
than that there are problems (such as \ref{BE}) which have more than
one weak solution, namely the uniqueness of the Cauchy problem is
violated, and hence the physical meaning of the problem is lost. This
is why we think that it is worthwhile to investigate these kind of
problems in the framework of generalized solutions in the world of
ultrafunctions.

\subsection{Generalized Ultrafunction Solutions and their first properties}

In Section \ref{gussp} we gave the definition of GUS for stationary
problems. The definition of GUS for evolution problems is analogous: 
\begin{defn}
\label{def:pippa}An ultrafunction $u\in\mathcal{C}^{1}(I^{\ast},V_{\Lambda}(\Omega)),$
is called a Generalized Ultrafunction Solution (GUS) of problem (\ref{cp})
if $\forall v\in V_{\Lambda}(\Omega),\ $ 
\begin{equation}
\left\{ \begin{array}{c}
\int\partial_{t}uv\ dx=\int\mathcal{A}^{\mathcal{\ast}}(u)v\ dx;\\
\\
u\left(0,x\right)=u_{0}\left(x\right).
\end{array}\right.\label{q}
\end{equation}

\end{defn}
Problem (\ref{q}) can be rewritten as follows: 
\[
\left\{ \begin{array}{c}
u\in\mathcal{C}^{1}(I^{\ast},V_{\Lambda});\notag\\
\\
\partial_{t}u=P_{\Lambda}\mathcal{A}^{\mathcal{\ast}}(u);\label{esse}\\
\\
u\left(0,x\right)=u_{0}\left(x\right),\notag
\end{array}\right.
\]
where $P_{\Lambda}$ is the orthogonal projection. The main Theorem
of this section states that problem (\ref{cp}) locally has a GUS.
As for the ordinary differential equations in finite dimensional spaces,
this solution is defined for an interval of time which depends on
the initial data. 
\begin{thm}
\label{TT}Let $\mathcal{A}|_{V_{\lambda}(\Omega)}$ be locally Lischitz
continuous $\forall\lambda\in\mathfrak{L}$; then there exists a number
$T_{\Lambda}(u_{0})\in\left(0,T\right]_{\mathbb{R}^{\ast}}$ such
that problem (\ref{cp}) has a unique GUS $u_{\Lambda}$ in $\left[0,T_{\Lambda}(u_{0})\right)_{\mathbb{R}^{\ast}}.$ \end{thm}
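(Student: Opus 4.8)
The plan is to transfer the classical Picard--Lindel\"of existence-and-uniqueness theorem for ODEs in finite-dimensional spaces to the hyperfinite-dimensional setting via the $\Lambda$-limit. The key observation is that Problem (\ref{cp}), once we pass to the equivalent formulation (\ref{esse}), is nothing but the Cauchy problem for the internal ODE
\[
\partial_t u = P_\Lambda \mathcal{A}^{\ast}(u), \qquad u(0)=u_0,
\]
on the hyperfinite-dimensional space $V_\Lambda(\Omega)$, and $P_\Lambda\circ\mathcal{A}^{\ast}$ is locally Lipschitz by transfer. So the whole proof is an application of transfer once the problem is set up at the level of the approximating nets.

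First I would fix an approximating net $(V_\lambda(\Omega))_{\lambda\in\mathfrak{L}}$ for $V(\Omega)$ and, for each $\lambda\in\mathfrak{L}$, consider the finite-dimensional Cauchy problem: find $u_\lambda\in\mathcal{C}^1(I,V_\lambda(\Omega))$ such that $\partial_t u_\lambda = P_{V_\lambda}\mathcal{A}(u_\lambda)$ with $u_\lambda(0)=P_{V_\lambda}u_0$, where $P_{V_\lambda}$ is the $L^2$-orthogonal projection onto $V_\lambda(\Omega)$. Since $V_\lambda(\Omega)$ is finite-dimensional and $\mathcal{A}|_{V_\lambda(\Omega)}$ is locally Lipschitz by hypothesis, the composition $P_{V_\lambda}\circ\mathcal{A}$ is locally Lipschitz on $V_\lambda(\Omega)$; hence by the classical Picard--Lindel\"of theorem there is a maximal time $T_\lambda=T_\lambda(P_{V_\lambda}u_0)>0$ and a unique solution $u_\lambda$ on $[0,T_\lambda)$. (To keep everything inside $I=[0,T)$ one replaces $T_\lambda$ by $\min\{T_\lambda,T\}$; if one prefers a genuinely half-open interval one works on closed subintervals $[0,T_\lambda-\varepsilon]$ and lets $\varepsilon\to 0$ at the end.)

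Next I would take the $\Lambda$-limit of the net $(u_\lambda)_{\lambda\in\mathfrak{L}}$ and set $T_\Lambda(u_0):=\lim_{\lambda\uparrow\Lambda}\min\{T_\lambda,T\}\in(0,T]_{\mathbb{R}^{\ast}}$ and $u_\Lambda:=\lim_{\lambda\uparrow\Lambda}u_\lambda$. One checks, using Theorem \ref{nuovo} and the definition of the $\Lambda$-limit of nets of mathematical objects, that $u_\Lambda\in\mathcal{C}^1(I^{\ast},V_\Lambda(\Omega))$: indeed each $u_\lambda\in\mathcal{C}^1(I,V(\Omega))$, so $u_\Lambda\in[\mathcal{C}^1(I,V(\Omega))]^{\ast}$, and for each internal $t$ the value $\partial_t^i u_\Lambda(t,\cdot)=\lim_{\lambda\uparrow\Lambda}\partial_t^i u_\lambda(t,\cdot)$ lies in $V_\Lambda(\Omega)$ for $i\le 1$, which is exactly the requirement in Definition \ref{def:C kappa}. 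The initial condition passes to the limit because $\lim_{\lambda\uparrow\Lambda}P_{V_\lambda}u_0 = P_\Lambda u_0^{\ast}$, and if $u_0\in V(\Omega)$ this is just $u_0$ (and in any case one reads the Cauchy datum in $V_\Lambda(\Omega)$). For the equation itself: for every $v\in V_\Lambda(\Omega)$ write $v=\lim_{\lambda\uparrow\Lambda}v_\lambda$ with $v_\lambda\in V_\lambda(\Omega)$; since $\int_\Omega \partial_t u_\lambda\, v_\lambda\,dx=\int_\Omega \mathcal{A}(u_\lambda)\,v_\lambda\,dx$ holds for every $\lambda$ in a qualified set, taking the $\Lambda$-limit and using the compatibility of $\Lambda$-limits with sums, products and the integral (Theorem \ref{nuovo} and Definition \ref{b}) gives $\int^{\ast}\partial_t u_\Lambda\, v\,dx=\int^{\ast}\mathcal{A}^{\ast}(u_\Lambda)\,v\,dx$, which is (\ref{q}).

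Finally, uniqueness: if $u$ and $w$ are two GUS on a common interval $[0,S)_{\mathbb{R}^{\ast}}$ with the same datum, then both satisfy the internal ODE $\partial_t u = P_\Lambda\mathcal{A}^{\ast}(u)$ on a hyperfinite-dimensional space with a locally Lipschitz right-hand side (local Lipschitz continuity of $P_\Lambda\circ\mathcal{A}^{\ast}$ on $V_\Lambda(\Omega)$ follows by transfer from the hypothesis that each $\mathcal{A}|_{V_\lambda(\Omega)}$ is locally Lipschitz, together with the boundedness of $\|P_\Lambda\|$). Hence the internal (transferred) uniqueness part of Picard--Lindel\"of applies verbatim and forces $u\equiv w$. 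The main obstacle, and the point that needs the most care, is the bookkeeping that identifies the $\Lambda$-limit of the solution net $(u_\lambda)$ with an element of $\mathcal{C}^1(I^{\ast},V_\Lambda(\Omega))$ in the precise sense of Definition \ref{def:C kappa}, and that matches the hyperreal maximal time $T_\Lambda(u_0)$ produced by the $\Lambda$-limit with the one obtained by transferring Picard--Lindel\"of directly to the internal problem on $V_\Lambda(\Omega)$; equivalently, one must make sure that the approximate solutions $u_\lambda$ do not ``escape'' before the limiting time, which is where the local Lipschitz hypothesis is genuinely used rather than just continuity.
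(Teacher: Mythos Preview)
Your proposal is correct and follows essentially the same approach as the paper: set up the approximate Cauchy problems on each finite-dimensional $V_\lambda(\Omega)$, invoke Picard--Lindel\"of to obtain a unique local solution $u_\lambda$ on $[0,T_\lambda(u_0))$, and take the $\Lambda$-limit. The paper's proof is in fact a terse version of exactly what you wrote, omitting the bookkeeping you spell out (membership in $\mathcal{C}^1(I^\ast,V_\Lambda(\Omega))$, passage of the equation and datum to the limit, and the uniqueness argument via transfer).
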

\begin{proof}
For every $\lambda\in\mathfrak{L}$ let us consider the approximate
problem 
\begin{equation}
\left\{ \begin{array}{c}
u\in C^{1}(I,V_{\lambda}(\Omega))\ \ and\ \ \forall v\in V_{\lambda}(\Omega);\\
\\
\int_{\Omega}\partial_{t}u(t,x)\ v(x)dx=\int_{\Omega}\mathcal{A}(u(t,x))\ v(x)dx;\\
\\
u_{\lambda}\left(0\right)=\int_{\Omega}u_{0}(x)\ v(x)dx.
\end{array}\right.\label{gonerilla}
\end{equation}
It is immediate to check that this problem is equivalent to the following
one 
\begin{equation}
\left\{ \begin{array}{c}
u\in C^{1}(I,V_{\lambda}(\Omega));\\
\\
\partial_{t}u(t,x)=P_{\lambda}\mathcal{A}(u(t,x));\\
\\
u_{\lambda}\left(0\right)=P_{\lambda}u_{0},
\end{array}\right.\label{regana}
\end{equation}
where the \textquotedbl{}projection\textquotedbl{} $P_{\lambda}:L^{2}(\Omega)\rightarrow V_{\lambda}(\Omega)$
is defined by 
\begin{equation}
\int_{\Omega}P_{\lambda}w(x)v(x)dx=\left\langle w,v\right\rangle ,\ \forall v\in V_{\lambda}(\Omega).\label{PL}
\end{equation}
The Cauchy problem (\ref{regana}) is well posed since $V_{\lambda}(\Omega)$
is a finite dimensional vector space and $P_{\lambda}\circ$ $\mathcal{A}$
is locally Lipschitz continuous on $V_{\lambda}$. Then there exists
a number $T_{\lambda}(u_{0})\in\left(0,T\right]_{\mathbb{R}}$ such
that problem (\ref{regana}) has a unique solution in $\left[0,T_{\lambda}(u_{0})\right)_{\mathbb{R}}.$
Taking the $\Lambda$-limit, we get the conclusion. \end{proof}
\begin{defn}
We will refer to a solution $u_{\Lambda}$ given as in Theorem \ref{TT}
as to a local GUS. 
\end{defn}
Clearly the GUS is a global solution (namely a function defined for
every $t\in\left[0,T\right)$) if $T_{\lambda}(u_{0})$ is equal to
$T$. In concrete applications, the existence of a global solution
usually is a consequence of the existence of a coercive integral of
motion. In fact, we have the following corollary: 
\begin{cor}
\label{coco}Let the assumptions of Thm.~\ref{TT} hold. Moreover,
let us assume that there exists a function $I:V(\Omega)\rightarrow\mathbb{R}$
such that if $u(t)$ is a local GUS in $\left[0,T_{\lambda}\right)$,
then 
\begin{equation}
\partial_{t}I^{\ast}\left(u(t)\right)\leq0\label{vend}
\end{equation}
(or, more in general, that $I^{\ast}\left(u(t)\right)$ is not increasing)
and such that $\forall\lambda\in\mathfrak{L},\ I|_{V_{\lambda}(\Omega)}$
is coercive (namely if $u_{n}\in V_{\lambda}(\Omega)$ and $\left\Vert u_{n}\right\Vert \rightarrow\infty$
then $I\left(u_{n}\right)\rightarrow\infty$). Then $u(t)$ can be
extended to the full interval $\left[0,T\right).$ \end{cor}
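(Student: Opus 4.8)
The plan is to transport the two extra hypotheses down to the finite‑dimensional approximate Cauchy problems (\ref{regana}) that already appear in the proof of Theorem \ref{TT}, extract from them a global‑in‑time a priori bound by a standard ODE continuation argument, and then recover the conclusion for the GUS by taking the $\Lambda$-limit. So the skeleton is: (i) the local GUS is $u_\Lambda=\lim_{\lambda\uparrow\Lambda}u_\lambda$ where $u_\lambda$ solves (\ref{regana}); (ii) the non‑increase of $I^*(u_\Lambda(\cdot))$ descends to non‑increase of $I(u_\lambda(\cdot))$ for $\lambda$ in a qualified set; (iii) coercivity of $I|_{V_\lambda(\Omega)}$ then pins the trajectory $u_\lambda$ inside a fixed bounded set, forcing its maximal existence time to be all of $[0,T)$; (iv) passing to the $\Lambda$-limit gives $T_\Lambda(u_0)=T$, and uniqueness (Theorem \ref{TT}) identifies the extension with the original local GUS.

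For step (ii) I would fix $\lambda\in\mathfrak{L}$ and recall from the proof of Theorem \ref{TT} that (\ref{regana}) has a unique maximal solution $u_\lambda\in\mathcal{C}^1\big([0,T_\lambda(u_0)),V_\lambda(\Omega)\big)$ with $u_\lambda(0)=P_\lambda u_0$ and locally Lipschitz right‑hand side $P_\lambda\circ\mathcal{A}$ on the finite‑dimensional space $V_\lambda(\Omega)$. Since $I^*(u_\Lambda(t))=\lim_{\lambda\uparrow\Lambda}I(u_\lambda(t))$ by definition of the natural extension, the hypothesis that $t\mapsto I^*(u_\Lambda(t))$ is not increasing is an internal property of the internal $\ast$-function $I^*(u_\Lambda(\cdot))$, and hence — by the transfer principle built into the $\Lambda$-limit construction — it is equivalent to the statement that $t\mapsto I(u_\lambda(t))$ is not increasing on $[0,T_\lambda(u_0))$ for every $\lambda$ in some qualified set $Q\in\mathcal{U}$. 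Consequently, for $\lambda\in Q$ and all $t\in[0,T_\lambda(u_0))$,
\[
I(u_\lambda(t))\le I(u_\lambda(0))=I(P_\lambda u_0).
\]

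For steps (iii)–(iv), fix $\lambda\in Q$. Coercivity of $I|_{V_\lambda(\Omega)}$ forces the sublevel set $\{u\in V_\lambda(\Omega):I(u)\le I(P_\lambda u_0)\}$ to be bounded (otherwise pick $u_n$ in it with $\|u_n\|\to\infty$, whence $I(u_n)\to\infty$, contradicting $I(u_n)\le I(P_\lambda u_0)$ — and note that no continuity of $I$ is required). Hence $t\mapsto u_\lambda(t)$ stays in a fixed bounded, therefore (in finite dimension) relatively compact, subset of $V_\lambda(\Omega)$; by the standard escape‑from‑compact‑sets continuation theorem for ODE's with a locally Lipschitz field, a solution confined to a compact set cannot stop existing before the right endpoint of the time interval, so $T_\lambda(u_0)=T$ for every $\lambda\in Q$ (for $\lambda\notin Q$ one simply keeps the maximal solution, which is irrelevant to the $\Lambda$-limit). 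Taking the $\Lambda$-limit gives $T_\Lambda(u_0)=\lim_{\lambda\uparrow\Lambda}T_\lambda(u_0)=T$, and by the uniqueness part of Theorem \ref{TT} the resulting GUS on $[0,T)_{\mathbb{R}^*}$ extends the original local one.

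The main obstacle is not the ODE theory but the bookkeeping in the transfer step: one must phrase the monotonicity assumption on $u_\Lambda$ carefully enough as an internal property of $I^*(u_\Lambda(\cdot))$ so that it genuinely descends to a qualified set of indices $\lambda$, and one must make sure the finite‑dimensional continuation argument produces $T_\lambda(u_0)=T$ exactly — not merely $T_\lambda(u_0)$ infinitely close to $T$ — so that the $\Lambda$-limit yields $T_\Lambda(u_0)=T$ and the extension is global on the standard interval $[0,T)$. The rest is routine.
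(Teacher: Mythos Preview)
Your argument is correct and follows essentially the same route as the paper: descend the monotonicity of $I^{\ast}(u_{\Lambda}(\cdot))$ to a qualified set of indices $\lambda$, use coercivity of $I|_{V_{\lambda}(\Omega)}$ together with the standard finite-dimensional continuation theorem to force $T_{\lambda}(u_{0})=T$, and pass to the $\Lambda$-limit. The paper's proof is the same skeleton, only much terser (it compresses your steps (iii)--(iv) into the single clause ``by (\ref{vend1}) and the coercivity of $I|_{V_{\lambda}(\Omega)}$ we have that $T_{\lambda}(u_{0})=T$''); your write-up simply makes explicit the bounded-sublevel-set and escape-from-compact-sets reasoning that the paper leaves to the reader.
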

\begin{proof}
By our assumptions, there is a qualified set $Q$ such that $\forall\lambda\in Q$,
if $u_{\lambda}(t)$ is defined in $\left[0,T_{\lambda}\right),$
then 
\begin{equation}
\partial_{t}I\left(u_{\lambda}(t)\right)\leq0\label{vend1}
\end{equation}
since otherwise the inequality (\ref{vend}) would be violated. By
(\ref{vend1}) and the coercivity of $I|_{V_{\lambda}(\Omega)}$ we
have that $T_{\lambda}(u_{0})=T.$ Hence also $u(t)$ is defined in
the full interval $\left[0,T\right).$ 
\end{proof}

\subsection{GUS, weak and strong solutions}

We now investigate the relations between GUS, weak solutions and strong
solutions. 
\begin{thm}
\label{mina}Let $u\in C^{1}(I^{\ast},V_{\Lambda}(\Omega))$ be a
GUS of Problem (\ref{cp}), and let us assume that $\mathcal{A}$
is weakly continuous. Then 
\begin{enumerate}
\item \label{enu:if-then-the 1}if 
\[
u\in\left[\mathcal{C}^{1}(I^{\ast},V_{\Lambda}(\Omega))\right]_{B}
\]
then the distribution $\left[u\right]_{\mathscr{D}}$ is a weak solution
of Problem (\ref{cp}); 
\item moreover, if 
\[
w\in\left[u\right]_{\mathscr{D}}\cap\mathcal{C}^{1}(I,V(\Omega))
\]
then $w$ is a strong solution of Problem (\ref{cp}). 
\end{enumerate}
\end{thm}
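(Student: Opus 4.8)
The plan is to unfold the definition of GUS and translate the ultrafunction identities into statements about the generalized distribution $[u]_{\mathscr{D}}$, using the weak continuity of $\mathcal{A}$ together with the duality identity ($\ref{sd}$) for the ultrafunction derivative. For part (\ref{enu:if-then-the 1}), fix $\varphi\in\mathscr{D}(I\times\Omega)$. For each $t\in I^{\ast}$ the function $\varphi^{\ast}(t,\cdot)$ need not lie in $V_{\Lambda}(\Omega)$, so I cannot test ($\ref{q}$) against it directly; instead I would test against its canonical projection $P_{\Lambda}\varphi^{\ast}(t,\cdot)$ and control the error. The key point is that $\varphi^{\ast}(t,\cdot)$ and $P_{\Lambda}\varphi^{\ast}(t,\cdot)$ are indistinguishable when paired against any ultrafunction in $[V_{\Lambda}(\Omega)]_{B}$ up to an infinitesimal — this is essentially the defining property of $P_{\Lambda}$ combined with the fact that $\mathscr{D}\subseteq V_{\Lambda}^{\sigma}$ is false in general but $\varphi^{\ast}$ pairs consistently. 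Then, integrating the first identity of ($\ref{q}$) in $t$ over $I^{\ast}$ against $\varphi$, integrating by parts in $t$ (legitimate since $u\in\mathcal{C}^{1}(I^{\ast},V_{\Lambda}(\Omega))$ and $\varphi$ has compact support in $I$, so the boundary term at $T$ vanishes and only the $t=0$ term survives with $u(0,x)=u_{0}(x)$), and taking shadows, one recovers exactly the identity of Definition \ref{DS}:
\[
\int u(t,x)\varphi_{t}(t,x)\,dxdt-\int u(0,x)\varphi(0,x)\,dx=\langle \mathcal{A}(u),\varphi\rangle.
\]
Here the right-hand side is interpreted via the extension $[\mathcal{A}]_{\mathscr{D}}$ of the weakly continuous operator, and weak continuity is precisely what guarantees that $sh\int\mathcal{A}^{\ast}(u)\varphi^{\ast}\,dxdt$ depends only on $[u]_{\mathscr{D}}$, so the expression is well defined. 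One also needs $[u]_{\mathscr{D}}\in\mathscr{D}_{GB}^{\prime}$, which follows from the hypothesis $u\in[\mathcal{C}^{1}(I^{\ast},V_{\Lambda}(\Omega))]_{B}$.

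For the second part, suppose $w\in[u]_{\mathscr{D}}\cap\mathcal{C}^{1}(I,V(\Omega))$. Since $w$ is a genuine classical function in $\mathcal{C}^{1}(I,V(\Omega))$ and belongs to the same generalized-distribution class as $u$, the weak-solution identity just proved for $[u]_{\mathscr{D}}$ transfers verbatim to $w$: for all $\varphi\in\mathscr{D}(I\times\Omega)$,
\[
\int\int w\,\varphi_{t}\,dxdt-\int w(0,x)\varphi(0,x)\,dx=\int_{0}^{T}\langle\mathcal{A}(w(t)),\varphi(t,\cdot)\rangle\,dt.
\]
Now I would integrate by parts in $t$ on the left (using $w\in\mathcal{C}^{1}(I,V(\Omega))\subseteq\mathcal{C}^{1}(I,V^{\prime}(\Omega))$, and that $\varphi(T,\cdot)=0$) to move the $t$-derivative onto $w$, obtaining
\[
\int_{0}^{T}\langle\partial_{t}w(t)-\mathcal{A}(w(t)),\varphi(t,\cdot)\rangle\,dt+\langle w(0,\cdot)-u_{0},\varphi(0,\cdot)\rangle=0
\]
for all test functions $\varphi$. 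Choosing $\varphi$ supported away from $t=0$ and ranging over a dense set in $\mathscr{D}(\Omega)$ for each fixed $t$ forces $\partial_{t}w(t)=\mathcal{A}(w(t))$ in $V^{\prime}(\Omega)$ for every $t\in I$ (using continuity in $t$), and then a separate choice of $\varphi$ not vanishing at $t=0$ forces $w(0,\cdot)=u_{0}$. This is exactly the definition of a strong solution in the sense of Section \ref{sub:Strong-and-weak}.

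The main obstacle I anticipate is the first part: rigorously justifying that one may pass from testing ($\ref{q}$) against arbitrary $v\in V_{\Lambda}(\Omega)$ to testing against $\varphi^{\ast}(t,\cdot)$ for a standard $\varphi\in\mathscr{D}(I\times\Omega)$, and controlling the $t$-dependence of the error uniformly so that the $t$-integral of an infinitesimal-valued internal function is still infinitesimal. This requires an overspill/saturation argument or a direct estimate: one writes $\int\partial_{t}u\,\varphi^{\ast}\,dx=\int\partial_{t}u\,(P_{\Lambda}\varphi^{\ast})\,dx+\int\partial_{t}u\,(\varphi^{\ast}-P_{\Lambda}\varphi^{\ast})\,dx$, observes the first term equals $\int\mathcal{A}^{\ast}(u)(P_{\Lambda}\varphi^{\ast})\,dx$ by ($\ref{q}$), and then must show the second term integrates to something infinitesimal — which uses that $u\in[\mathcal{C}^{1}(I^{\ast},V_{\Lambda})]_{B}$ bounds $\partial_{t}u$ suitably and that $\varphi^{\ast}-P_{\Lambda}\varphi^{\ast}$ is $L^{2}$-orthogonal to $V_{\Lambda}$. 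The compactness of $\mathrm{supp}\,\varphi$ in $t$ makes the $t$-integration harmless once the pointwise-in-$t$ estimate is in hand. The integration by parts in $t$ and the weak-continuity bookkeeping for the right-hand side are then routine.
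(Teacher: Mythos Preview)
Your approach is essentially the same as the paper's: test the GUS identity against a standard test function $\varphi\in\mathscr{D}(I\times\Omega)$, integrate by parts in $t$, invoke weak continuity of $\mathcal{A}$ to pass to $[u]_{\mathscr{D}}$, and observe that an infinitesimal between real quantities must vanish. For part (2) the paper is terser than you: it simply notes that $w$ is weak by part (1) and lies in $\mathcal{C}^{1}(I,V(\Omega))\subset\mathcal{C}^{0}(I,V(\Omega))\cap\mathcal{C}^{1}(I,V'(\Omega))$, declaring this sufficient for ``strong''; your integration-by-parts argument spells out the implicit step.

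The one genuine difference is the point you flag as an obstacle. The paper does not project: it asserts outright that $\mathscr{D}(I\times\Omega)\subset\mathcal{C}^{\infty}_{B}(I^{\ast},V_{\Lambda}(\Omega))$ (identifying $\varphi$ with $\varphi^{\ast}$) and tests (\ref{q}) directly against $\varphi^{\ast}(t,\cdot)$, so no error term ever appears. Your projection route is more cautious, and in fact the ``error'' you worry about on the $\partial_{t}u$ side is exactly zero (since $\partial_{t}u(t,\cdot)\in V_{\Lambda}(\Omega)$ by Definition~\ref{def:C kappa} and $\varphi^{\ast}-P_{\Lambda}\varphi^{\ast}\perp V_{\Lambda}$), not merely infinitesimal; the only place a discrepancy could survive is on the $\mathcal{A}^{\ast}(u)$ side, and there it is absorbed by the weak-continuity hypothesis anyway. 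So your extra care is harmless but not needed in the paper's framing, and the two arguments coincide once this is observed.
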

\begin{proof}
(1) In order to simplify the notations, in this proof we will write
$\int$ instead of $\int^{\ast}$. Since $u$ is a GUS, then for any
$\varphi\in\mathscr{D}\left(I\times\Omega\right)\subset\mathcal{C}_{B}^{\infty}(I^{\ast},V_{\Lambda}(\Omega))\ $
(we identify $\varphi$ and $\varphi^{\ast}$) we have that 
\[
\int_{0}^{T}\int_{\Omega^{\ast}}\partial_{t}u\varphi\ dx\ dt=\int_{0}^{T}\int_{\Omega^{\ast}}\mathcal{A^{\ast}}(u)\varphi dx\ dt.
\]
Integrating in $t$, we get 
\[
\int_{0}^{T}\int_{\Omega^{\ast}}u(t,x)\ \partial_{t}\varphi dx\ dt-\int_{\Omega}u_{0}(x)\varphi(0,x)dx+\int_{0}^{T}\int_{\Omega^{\ast}}\mathcal{A}^{\ast}(u(t,x))\varphi dx\ dt=0.
\]
By the definition of $\left[u\right]_{\mathscr{D}}$, and as $\mathcal{A}$
is weakly continuous, we have that 
\[
\int_{0}^{T}\int_{\Omega^{\ast}}u(t,x)\ \partial_{t}\varphi dx\ dt\sim\int_{0}^{T}\int_{\Omega}[u]_{\mathscr{D}}(t,x)\ \partial_{t}\varphi dx\ dt,
\]
\[
\int_{\Omega^{\ast}}u_{0}(x)\varphi(0,x)dx\sim\int_{\Omega}\left([u]_{\mathscr{D}}\right)_{0}(x)\varphi(0,x)dx,
\]
\[
\int_{0}^{T}\int_{\Omega^{\ast}}\mathcal{A^{\ast}}(u(t,x))\varphi dx\ dt\sim\int_{0}^{T}\int_{\Omega}\mathcal{A}([u]_{\mathscr{D}}(t,x))\varphi dx\ dt.
\]
Henceforth 
\[
\int_{0}^{T}\int_{\Omega}[u]_{\mathscr{D}}(t,x)\ \partial_{t}\varphi dx\ dt-\int_{\Omega}\left([u]_{\mathscr{D}}\right)_{0}(x)\varphi(0,x)dx+\int_{0}^{T}\int_{\Omega}\mathcal{A}([u]_{\mathscr{D}}(t,x))\varphi dx\ dt\sim0.
\]
Since all three terms in the left hand side of the above equation
are real numbers, we have that their sum is a real number, and so
\[
\int_{0}^{T}\int_{\Omega}[u]_{\mathscr{D}}(t,x)\ \partial_{t}\varphi dx\ dt-\int_{\Omega}\left([u]_{\mathscr{D}}\right)_{0}(x)\varphi(0,x)dx+\int_{0}^{T}\int_{\Omega}\mathcal{A}([u]_{\mathscr{D}}(t,x))\varphi dx\ dt=0,
\]
namely $[u]_{\mathscr{D}}$ is a weak solution of Problem (\ref{cp}).

(2) If there exists $w\in\left[u\right]_{\mathscr{D}}\cap\mathcal{C}^{1}(I,V(\Omega))$
then $u\in\left[\mathcal{C}^{1}(I^{\ast},V_{\Lambda}(\Omega))\right]_{B}$,
so from $(\ref{enu:if-then-the 1})$ we get that $w$ is a weak solution
of Problem (\ref{cp}). Moreover, $w\in\mathcal{C}^{1}(I,V(\Omega))\subseteq\mathcal{C}^{0}(I,V(\Omega))\cap\mathcal{C}^{1}(I,V^{\prime}(\Omega))$,
and hence $w$ is a strong solution.
\end{proof}
Usually, if problem (\ref{cp}) has a strong solution $w$, it is
unique and it coincides with the GUS $u$ in the sense that $\left[w^{\ast}\right]_{\mathscr{D}}=\left[u\right]_{\mathscr{D}}$
and in many cases we have also that 
\begin{equation}
\left\Vert u-w^{\ast}\right\Vert \sim0.\label{fagiolino}
\end{equation}
If problem (\ref{cp}) does not have a strong solution but only weak
solutions, often they are not unique. Thus the GUS selects one weak
solution among them. 

Now suppose that $w\in L_{loc}^{1}$ is a weak solution such that
$\left[u\right]_{\mathscr{D}}=\left[w^{\ast}\right]_{\mathscr{D}}$
but (\ref{fagiolino}) does not hold. If we \foreignlanguage{english}{set}
\[
\psi=u-w^{\ast}
\]
then $\left\Vert \psi\right\Vert $ is not an infinitesimal and $\psi$
carries some information which is not contained in $w.$ Since $u$
and $w$ define the same distribution, $\left[\psi\right]_{\mathscr{D}}=0,$
i.e. 
\[
\forall\varphi\in\mathfrak{\mathscr{D},\ }\int\psi\varphi^{\ast}\ dx=0.
\]
So the information contained in $\psi$ cannot be contained in a distribution.
Nevertheless this information might be physically relevant. In Section
\ref{sub:The-microscopic-part}, we will see one example of this fact.

\subsection{First example:\ the nonlinear Schroedinger equation\label{se}}

Let us consider the following nonlinear Schroedinger equation in $\mathbb{R}^{N}:$
\begin{equation}
i\partial_{t}u=-\frac{1}{2}\Delta u+V(x)u-|u|^{p-2}u;\ p>2,\label{NSE}
\end{equation}
where, for simplicity, we suppose that $V(x)\in\mathcal{C}^{1}(\mathbb{R}^{N})$
is a smooth bounded potential. A suitable space for this problem is
\[
V(\mathbb{R}^{N})=H^{2}(\mathbb{R}^{N})\cap L^{p}(\mathbb{R}^{N})\cap\mathcal{C}(\mathbb{R}^{N}).
\]

In fact, if $u\in V(\mathbb{R}^{N}),$ then the energy 
\begin{equation}
E(u)=\int\left[\frac{1}{2}\left\vert \nabla u\right\vert ^{2}+V(x)\left\vert u\right\vert ^{2}+\frac{2}{p}|u|^{p}\right]dx\label{senergia}
\end{equation}
is well defined; moreover, if $u\in V(\mathbb{R}^{N})$ we have that
\[
-\frac{1}{2}\Delta u+V(x)u-|u|^{p-2}u\in V^{\prime}(\mathbb{R}^{N}),
\]
so the problem is well-posed in the sense of ultrafunctions (see Def.
\ref{def:pippa}). It is well known, (see e.g. \cite{Ca03}) that
if $p<2+\frac{4}{N}$ then the Cauchy problem (\ref{NSE}) (with initial
data in $V(\mathbb{R}^{N})$) is well posed, and there exists a strong
solution 
\[
u\in C^{0}(I,V(\mathbb{R}^{N}))\cap C^{1}(I,V^{\prime}(\mathbb{R}^{N})).
\]
On the contrary, if $p\geq2+\frac{4}{N},$ the solutions, for suitable
initial data, blows up in a finite time. So in this case weak solutions
do not exist. Nevertheless, we have GUS: 
\begin{thm}
The Cauchy problem relative to equation (\ref{NSE}) with initial
data $u_{0}\in V_{\Lambda}(\mathbb{R}^{N})$ has a unique GUS $u\in\mathcal{C}^{1}(I,V_{\Lambda}(\mathbb{R}^{N}));$
moreover, the energy (\ref{senergia}) and the $L^{2}$-norm are preserved
along this solution. \end{thm}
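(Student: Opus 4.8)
The plan is to obtain a local GUS together with uniqueness from Theorem~\ref{TT}, to upgrade it to a global solution via Corollary~\ref{coco} with the $L^{2}$-norm as the coercive conserved quantity, and to establish the two conservation laws by a direct computation on the Galerkin approximations, passing the resulting identities to the $\Lambda$-limit.

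First I would write \eqref{NSE} in the abstract form \eqref{cp} by setting $\mathcal{A}(u)=-i\bigl(-\tfrac{1}{2}\Delta u+V(x)u-|u|^{p-2}u\bigr)$, which maps $V(\mathbb{R}^{N})=H^{2}(\mathbb{R}^{N})\cap L^{p}(\mathbb{R}^{N})\cap\mathcal{C}(\mathbb{R}^{N})$ into $L^{2}(\mathbb{R}^{N})\subset V^{\prime}(\mathbb{R}^{N})$. On a finite dimensional subspace $V_{\lambda}(\mathbb{R}^{N})$ the $L^{\infty}$-norm is controlled by any fixed norm; since $p>2$ the map $z\mapsto|z|^{p-2}z$ is locally Lipschitz on $\mathbb{C}$, and the linear part of $\mathcal{A}$ is a bounded operator on the finite dimensional space, so $\mathcal{A}|_{V_{\lambda}}$, hence $P_{\lambda}\circ\mathcal{A}$, is locally Lipschitz continuous for every $\lambda\in\mathfrak{L}$. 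Theorem~\ref{TT} then yields a number $T_{\Lambda}(u_{0})$ and a unique local GUS $u_{\Lambda}$ on $[0,T_{\Lambda}(u_{0}))$ with $u_{\Lambda}(0)=u_{0}$.

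Next I would prove that the solutions $u_{\lambda}$ of the Galerkin approximations \eqref{regana} conserve the $L^{2}$-norm and the energy, using only that $P_{\lambda}$ is the \emph{orthogonal} projection onto $V_{\lambda}$ (hence self-adjoint, with $P_{\lambda}u_{\lambda}=u_{\lambda}$). For the $L^{2}$-norm, working with the Hermitian $L^{2}$ product,
\[
\tfrac{d}{dt}\,\|u_{\lambda}(t)\|^{2}=2\,\mathrm{Re}\bigl\langle P_{\lambda}\mathcal{A}(u_{\lambda}),u_{\lambda}\bigr\rangle=2\,\mathrm{Re}\bigl\langle \mathcal{A}(u_{\lambda}),u_{\lambda}\bigr\rangle=0,
\]
since $\bigl\langle -\tfrac12\Delta u_{\lambda}+Vu_{\lambda}-|u_{\lambda}|^{p-2}u_{\lambda},\,u_{\lambda}\bigr\rangle=\tfrac12\|\nabla u_{\lambda}\|^{2}+\int V|u_{\lambda}|^{2}-\int|u_{\lambda}|^{p}\in\mathbb{R}$ and multiplication by $-i$ makes it purely imaginary. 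For the energy one argues in the same way, testing the equation against $P_{\lambda}\nabla E(u_{\lambda})$ and using that \eqref{NSE} is the Hamiltonian system associated with the energy $E$ of \eqref{senergia} (so that $\mathcal{A}(u)$ equals $-\tfrac{i}{2}$ times the $L^{2}$-gradient of $E$): the orthogonal-projection identity $\langle P_{\lambda}g,g\rangle=\|P_{\lambda}g\|^{2}\in\mathbb{R}$ again makes the relevant term purely imaginary, whence $\frac{d}{dt}E(u_{\lambda}(t))=0$. Taking the $\Lambda$-limit (which commutes with $\partial_{t}$ and with $\int^{\ast}$) gives $\partial_{t}\|u_{\Lambda}(t)\|^{2}=0$ and $\partial_{t}E^{\ast}(u_{\Lambda}(t))=0$, i.e.\ both quantities are constant along the GUS.

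Finally, taking $I(u):=\|u\|^{2}$ in Corollary~\ref{coco} --- this $I$ is trivially coercive on each finite dimensional $V_{\lambda}$ and, by the above, satisfies $\partial_{t}I^{\ast}(u_{\Lambda}(t))=0\leq0$ --- we conclude that $u_{\Lambda}$ extends to the whole interval $[0,T)$, so $u_{\Lambda}\in\mathcal{C}^{1}(I^{\ast},V_{\Lambda}(\mathbb{R}^{N}))$; uniqueness is part of Theorem~\ref{TT}. I expect the conceptual crux of the argument to be this last step: although for $p\geq 2+\tfrac{4}{N}$ the classical Cauchy problem blows up in finite time, the Galerkin truncations \eqref{regana} never do, because conservation of $\|u_{\lambda}\|_{L^{2}}$ confines $u_{\lambda}(t)$ to a bounded subset of the finite dimensional (hence norm-equivalent) space $V_{\lambda}$ --- and this is precisely the mechanism producing a \emph{global} GUS past the classical blow-up time. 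The remaining point requiring care is checking that the orthogonal projection does not spoil the conservation laws, which is exactly what the two identities above ensure.
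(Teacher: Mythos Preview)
Your proposal is correct and follows essentially the same route as the paper: apply Theorem~\ref{TT} for local existence and uniqueness, verify that the $L^{2}$-norm is conserved along the Galerkin approximations by the direct computation $\frac{d}{dt}\|u_{\lambda}\|^{2}=2\,\mathrm{Re}\langle \partial_{t}u_{\lambda},u_{\lambda}\rangle=0$, and then invoke Corollary~\ref{coco} with $I(u)=\|u\|^{2}$ to globalize. The paper's own proof is terser---it simply asserts that ``a similar direct computation'' handles the energy and that the hypotheses of Theorem~\ref{TT} are satisfied---whereas you spell out the local Lipschitz check on each $V_{\lambda}$ and the Hamiltonian mechanism behind energy conservation (the identity $\langle g,P_{\lambda}g\rangle=\|P_{\lambda}g\|^{2}\in\mathbb{R}$ being the key observation that the orthogonal projection does not destroy the conservation laws); this is additional detail, not a different argument.
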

\begin{proof}
Let us consider the functional 
\[
I(u)=\int|u|^{2}dx.
\]

On every approximating space $V_{\lambda}(\Omega)$ we have that 
\[
\frac{d}{dt}\int|u|^{2}dx=\int\frac{d}{dt}|u|^{2}dx=2Re\int\left(u,\frac{d}{dt}u\right)=0,
\]
therefore $I^{\ast}$ (namely, the $L^{2}$-norm) is constant on GUS.
A similar direct computation can be used to prove that also the energy
is constant on GUS. Moreover, it is easily seen that $\forall\lambda\in\mathfrak{L},\ I|_{V_{\lambda}(\Omega)}$
is coercive. Since also the other hypotheses of Theorem \ref{TT}
are verified, we can apply Corollary \ref{coco} to get the existence
and uniqueness of the GUS. 
\end{proof}
Now it is interesting to know what these solutions look like, and
if they have any reasonable meaning from the physical or the mathematical
point of view. For example, when $p<2+\frac{4}{N}$ the dynamics given
by equation (\ref{NSE}), for suitale initial data, produces solitons
(see e.g. \cite{befolib} or \cite{milan}); so we conjecture that
in the case $p\geq2+\frac{4}{N}$ solitons with infinitesimal radius
will appear at the concentration points and that they will behave
as pointwise particles which follow the Newtonian Dynamics.

\subsection{Second example: the nonlinear wave equation}

Let us consider the following Cauchy problem relative to a nonlinear
wave equation in a bounded open set $\Omega\subset\mathbb{R}^{N}:$
\begin{equation}
\left\{ \begin{array}{ccc}
\square\psi+|\psi|^{p-2}\psi & = & 0\,\mbox{\,\ in\,\,}I\times\Omega;\\
\\
\psi & = & 0\,\mbox{\,\ on}\,\,I\times\partial\Omega;\\
\\
\psi(0,x) & = & \psi_{0}(x),
\end{array}\right.\label{cordelia}
\end{equation}
where $\square=\partial_{t}^{2}-\Delta,\ p>2\,,I=[0,T)$. In order
to formulate this problem in the form (\ref{cp}), we reduce it to
a system of first order equations (Hamiltonian formulation): 
\[
\left\{ \begin{array}{c}
\partial_{t}\psi=\phi;\\
\\
\partial_{t}\phi=\Delta\psi-|\psi|^{p-2}\psi.
\end{array}\right.
\]
If we set 
\[
u=\left[\begin{array}{c}
\psi\\
\phi
\end{array}\right];\ \mathcal{A}(u)=\left[\begin{array}{c}
\phi\\
\Delta\psi-|\psi|^{p-2}\psi
\end{array}\right],
\]
then problem (\ref{cordelia}) reduces to a particular case of problem
(\ref{cp}).

A suitable space for this problem is 
\[
V(\Omega)=\left[\mathcal{C}^{2}(\Omega)\cap\mathcal{C}_{0}(\overline{\Omega})\right]\times\mathcal{C}(\Omega).
\]
If $u\in V(\Omega),$ the energy 
\begin{equation}
E(u)=\int_{\Omega}\left[\frac{1}{2}\left\vert \phi\right\vert ^{2}+\frac{1}{2}\left\vert \nabla\psi\right\vert ^{2}+\frac{1}{p}|\psi|^{p}\right]dx\label{energia2}
\end{equation}
is well defined.

It is well known, (see e.g. \cite{JLL}) that problem (\ref{cordelia})
has a weak solution; however, it is possible to prove the global uniqueness
of such a solution only if $p<\frac{N}{N-2}$ (any $p$ if $N=1,2$).

On the contrary, in the framework of ultrafunctions we have the following
result: 
\begin{thm}
The Cauchy problem relative to equation (\ref{cordelia}) with initial
data $u_{0}\in V_{\Lambda}(\Omega)$ has a unique solution $u\in\mathcal{C}^{1}(I^{\ast},V_{\Lambda}(\Omega));$
moreover, the energy (\ref{energia2}) is preserved along this solution. \end{thm}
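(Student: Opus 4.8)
The plan is to read (\ref{cordelia}), in its first-order Hamiltonian form, as an instance of the Cauchy problem (\ref{cp}) with
\[
\mathcal{A}(u)=\bigl(\phi,\ \Delta\psi-|\psi|^{p-2}\psi\bigr),\qquad u=(\psi,\phi),
\]
and then to apply Theorem \ref{TT} followed by Corollary \ref{coco}. First I would check the hypothesis of Theorem \ref{TT}, i.e., that $\mathcal{A}$ restricted to each finite-dimensional $V_{\lambda}(\Omega)$ is locally Lipschitz continuous: the maps $\phi\mapsto\phi$ and $\psi\mapsto\Delta\psi$ are linear, hence Lipschitz once restricted to the finite-dimensional $V_{\lambda}(\Omega)$; the Nemytskii operator $\psi\mapsto|\psi|^{p-2}\psi$ is locally Lipschitz from $\mathcal{C}(\Omega)$ into $\mathcal{C}(\Omega)$, because for $p>2$ the real function $s\mapsto|s|^{p-2}s$ is $\mathcal{C}^{1}$ with locally bounded derivative; and composing with the continuous linear projection $P_{\lambda}$ preserves local Lipschitz continuity. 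Theorem \ref{TT} then yields a $T_{\Lambda}(u_{0})\in(0,T]_{\mathbb{R}^{\ast}}$ and a unique local GUS $u_{\Lambda}\in\mathcal{C}^{1}\bigl([0,T_{\Lambda}(u_{0}))_{\mathbb{R}^{\ast}},V_{\Lambda}(\Omega)\bigr)$.

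The second step is the conservation of the energy (\ref{energia2}). By the $\Lambda$-limit construction it suffices to show that, for every $\lambda$ in a qualified set, the solution $u_{\lambda}$ of the approximate (Galerkin) problem (\ref{regana}), i.e., $\partial_{t}u_{\lambda}=P_{\lambda}\mathcal{A}(u_{\lambda})$, satisfies $\frac{d}{dt}E(u_{\lambda}(t))=0$. Integrating by parts the term $\tfrac{1}{2}|\nabla\psi|^{2}$ (the boundary contribution vanishing because both $\psi$ and $\partial_{t}\psi$ belong to $\mathcal{C}_{0}(\overline{\Omega})$) one finds $E'(u)=\bigl(-\Delta\psi+|\psi|^{p-2}\psi,\ \phi\bigr)$, so that $\mathcal{A}(u)=JE'(u)$, where $J$ is the skew-symmetric operator $(a,b)\mapsto(b,-a)$. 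If the approximating net is chosen so that each $V_{\lambda}(\Omega)$ is $J$-stable (equivalently, of ``diagonal'' product form $W_{\lambda}\times W_{\lambda}$), then $P_{\lambda}$ commutes with $J$, and since $\partial_{t}u_{\lambda}(t)\in V_{\lambda}(\Omega)$ we obtain
\[
\frac{d}{dt}E(u_{\lambda})=\bigl(E'(u_{\lambda}),\partial_{t}u_{\lambda}\bigr)=\bigl(P_{\lambda}E'(u_{\lambda}),\partial_{t}u_{\lambda}\bigr)=\bigl(P_{\lambda}E'(u_{\lambda}),\,JP_{\lambda}E'(u_{\lambda})\bigr)=0,
\]
the last equality by the skew-symmetry of $J$. (Equivalently, one may differentiate $E(u_{\lambda})$ directly and use the two scalar duality identities $\partial_{t}\psi_{\lambda}=P_{\lambda}\phi_{\lambda}$ and $\partial_{t}\phi_{\lambda}=P_{\lambda}(\Delta\psi_{\lambda}-|\psi_{\lambda}|^{p-2}\psi_{\lambda})$, checking that the two resulting terms cancel.) Taking the $\Lambda$-limit gives $\partial_{t}E^{\ast}(u_{\Lambda}(t))=0$.

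Finally I would globalize by means of Corollary \ref{coco} with the functional $E$: the energy is constant along local GUS, hence non-increasing, and $E|_{V_{\lambda}(\Omega)}$ is coercive, since $E(u)\ge\tfrac{1}{2}\|\phi\|_{L^{2}}^{2}+\tfrac{1}{2}\|\nabla\psi\|_{L^{2}}^{2}$, the square root of the right-hand side is a genuine norm on $V_{\lambda}(\Omega)$ (by the Poincar\'e inequality on $\mathcal{C}_{0}(\overline{\Omega})$), and all norms on the finite-dimensional $V_{\lambda}(\Omega)$ are equivalent, whence $E(u)\to\infty$ as $\|u\|\to\infty$. Corollary \ref{coco} then gives $T_{\Lambda}(u_{0})=T$, so the GUS $u$ is defined on the whole of $I^{\ast}=[0,T)_{\mathbb{R}^{\ast}}$, lies in $\mathcal{C}^{1}(I^{\ast},V_{\Lambda}(\Omega))$, is unique (by Theorem \ref{TT}), and conserves the energy.

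The step I expect to be the main obstacle is the energy identity. Unlike the nonlinear Schroedinger case, where multiplication by $i$ commutes automatically with the complex-linear Galerkin projection, here one has to make sure that the $L^{2}$-orthogonal projection defining the approximate dynamics does not destroy the Hamiltonian structure; this is precisely why the approximating spaces have to be built so as to be stable under $J$. Once this compatibility is arranged the cancellation is immediate, and the remaining ingredients --- local Lipschitz continuity of the nonlinearity and coercivity of the energy --- are routine.
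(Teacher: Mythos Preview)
Your approach is exactly the paper's: apply Theorem \ref{TT} and then Corollary \ref{coco} with $I(u)=E(u)$. The paper's own proof is a one-liner that simply cites these two results with the choice $I=E$, without spelling out the local Lipschitz check, the energy identity at the Galerkin level, or the coercivity of $E|_{V_{\lambda}}$; you have supplied all of these details correctly. Your observation that the energy cancellation at the approximate level requires the approximating spaces to be $J$-stable (i.e., of diagonal product form $W_{\lambda}\times W_{\lambda}$) is a genuine subtlety the paper's terse argument glosses over, and your resolution --- choosing the net accordingly, which is certainly permitted by Definition \ref{approxseq} --- is the natural fix.
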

\begin{proof}
We have only to apply Theorem \ref{TT} and Corollary \ref{coco},
where we set 
\[
I(u):=E(u)=\int_{\Omega}\left[\frac{1}{2}\left\vert \phi\right\vert ^{2}+\frac{1}{2}\left\vert \nabla\psi\right\vert ^{2}+\frac{1}{p}|\psi|^{p}\right]dx.\qedhere
\]

\end{proof}

\section{The Burgers' equation\label{tbe}}

\subsection{Preliminary remarks}

In section \ref{se} we have shown two examples which show that: 
\begin{itemize}
\item equations which do not have weak solutions usually have a unique GUS; 
\item equations which have more than a weak solution have a unique GUS. 
\end{itemize}
So ultrafunctions seem to be a good tool to study the phenomena modelled
by these equations. At this point we think that the main question
is to know what the GUS look like and if they are suitable to represent
properly the phenomena described by such equations from the point
of view of Physics. Of course this question might not have a unique
answer: probably there are phenomena which are well represented by
GUS and others which are not. In any case, it is worthwhile to investigate
this issue relatively to the main equations of Mathematical Physics
such as (\ref{NSE}), (\ref{cordelia}), Euler equations, Navier-Stokes
equations and so on.

We have decided to start this program with the (nonviscous) Burgers'
equation

\[
\frac{\partial u}{\partial t}+u\frac{\partial u}{\partial x}=0,
\]
since it presents the following peculiarities: 
\begin{itemize}
\item it is one of the (formally) simplest nonlinear PDE; 
\item it does not have a unique weak solution, but there is a physical criterium
to determine the solution which has physical meaning (namely the entropy
solution); 
\item many solutions can be written explicitly, and this helps to confront
classical and ultrafunction solutions. 
\end{itemize}
We recall that an other interesting approach to Burgers' equation
by means of generalized functions (in the Colombeau sense) has been
devoloped by Biagioni and Oberguggenberger in \cite{biagioni-MO}.

\subsection{Properties of the GUS of Burgers' equations}

The first property of Burgers' equation (\ref{BE}) that we prove
is that its smooth solutions with compact support have infinitely
many integrals of motion: 
\begin{prop}
\label{tina}Let $G(u)$ be a differentiable function, $G\in\mathcal{C}^{1}(\mathbb{R}),$
$G(0)=0,$ and let $u(t,x)$ be a smooth solution of (\ref{BE}) with
compact support. Then 
\[
I(u)=\int G(u(t,x))dx
\]
is a constant of motion of (\ref{BE}) (provided that the integral
converges). \end{prop}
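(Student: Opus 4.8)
The plan is to differentiate $I(u) = \int G(u(t,x))\,dx$ in time and show the derivative vanishes, using the equation (\ref{BE}) to rewrite $\partial_t u$ and then recognizing the integrand as a perfect $x$-derivative that integrates to zero thanks to compact support. First I would note that since $u$ is smooth with compact support in $x$ (uniformly on compact time intervals, as is standard for such classical solutions), the function $x\mapsto G(u(t,x))$ is smooth with compact support, so differentiation under the integral sign is justified and all the integrals below converge.

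Next I would compute, using the chain rule and then (\ref{BE}),
\[
\frac{d}{dt}\int G(u(t,x))\,dx = \int G'(u)\,\partial_t u\,dx = -\int G'(u)\,u\,\partial_x u\,dx.
\]
The key observation is that the integrand on the right is an exact derivative: if we set $H(s) := \int_0^s G'(\sigma)\,\sigma\,d\sigma$, then $H\in\mathcal{C}^1(\mathbb{R})$, $H(0)=0$, and $\partial_x\big(H(u(t,x))\big) = H'(u)\,\partial_x u = G'(u)\,u\,\partial_x u$. Hence
\[
\frac{d}{dt}\int G(u(t,x))\,dx = -\int \partial_x\big(H(u(t,x))\big)\,dx = 0,
\]
the last equality because $H(u(t,\cdot))$ has compact support (as $u(t,\cdot)$ does and $H(0)=0$), so the fundamental theorem of calculus gives a vanishing boundary contribution.

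There is no real obstacle here; the statement is essentially a one-line computation once the flux $H$ is introduced. The only points requiring a word of care are the justification of differentiating under the integral (handled by smoothness plus uniform-in-$t$ compact support on compact time intervals) and the observation that $G(0)=0$ together with compact support of $u$ guarantees $G(u(t,\cdot))\in L^1$, so that $I(u)$ is well-defined to begin with — which is exactly the parenthetical hypothesis "provided that the integral converges." I would present the argument in this order: well-definedness and differentiability, the chain-rule computation, introduction of $H$, and the cancellation.
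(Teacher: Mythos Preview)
Your proposal is correct and follows essentially the same approach as the paper: multiply the equation by $G'(u)$, recognize $G'(u)\,u\,\partial_x u$ as $\partial_x H(u)$ with $H(s)=\int_0^s \sigma G'(\sigma)\,d\sigma$, and use compact support to kill the integral of the $x$-derivative. You add a bit more justification (differentiation under the integral, $H(0)=0$), but the argument is the same one-line entropy/entropy-flux computation the paper gives.
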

\begin{proof}
The proof of this fact is known, we include it here only for the sake
of completeness. Multiplying both sides of equation (\ref{BE}) by
$G^{\prime}(u),$ we get the equation 
\[
G^{\prime}(u)\partial_{t}u+G^{\prime}(u)u\partial_{x}u=0,
\]
which gives 
\[
\partial_{t}G(u)+\partial_{x}H(u)=0,
\]
where 
\begin{equation}
H(u)=\int_{0}^{u}sG^{\prime}(s)ds.\label{lilla}
\end{equation}
Since $u$ has compact support, we have that $-\int\partial_{x}H(u)dx=0,$
and hence 
\[
\partial_{t}\int G(u)dx=-\int\partial_{x}H(u)dx=0.\qedhere
\]

\end{proof}
Let us notice that Proposition \ref{tina} would hold also if we do
not assume that $u$ has a compact support, provided that it decays
sufficiently fast. 

In the literature, any function $G$ as in the above theorem is called
entropy and $H$ is called entropy flux (see e.g. \cite{Bianchini,MinEnt}),
since in some interpretation of this equation $G$ corresponds (up
to a sign) the the physical entropy. But this is not the only possible
interpretation. 

If we interpret (\ref{BE}) as a simplification of the Euler equation,
the unknown $u$ is the velocity; then, for $G(u)=u\ $ and $\ G(u)=\frac{1}{2}u^{2},$
we have the following constants of motion: the \textbf{momentum} 
\[
P(u)=\int udx
\]
and the \textbf{energy} 
\[
E(u)=\frac{1}{2}\int u^{2}dx.
\]

However, in general the solutions of Burgers' equation are not smooth;
in fact, if the initial data $u_{0}(x)$ is a smooth function with
compact support, the solution develops singularities. Hence we must
consider weak solutions which, in this case, are solutions of the
following equation in weak form: $w\in L_{loc}^{1}(I\times\Omega)$,
and $\forall\varphi\in\mathscr{D}(I\times\Omega)$

\begin{multline}
\int_{0}^{T}\int_{\Omega}w(t,x)\partial_{t}\varphi(t,x)\ dxdt-\int_{\Omega}u_{0}(x)\varphi(0,x)dx+\\
\frac{1}{2}\int_{0}^{T}\int_{\Omega}w(t,x)^{2}\partial_{x}\varphi(t,x)\ dxdt=0.\label{zanna}
\end{multline}

Nevertheless, the momentum and the energy of the GUS of Burgers' equation
are constants of motion as we will show in Theorem \ref{eliseo}.
This result holds if we work in $\mathcal{C}^{1}(I^{\ast},U_{\Lambda}(\mathbb{R}))$,
where $U_{\Lambda}\left(\mathbb{R}\right)$ is the space of ultrafunctions
described in Th. \ref{vecchiaroba}.

With this choice of the space of ultrafunctions, a GUS of the Burgers'
equation, by definition, is a solution of the following problem: 
\begin{equation}
\left\{ \begin{array}{c}
u\in\mathcal{C}^{1}(I^{\ast},U_{\Lambda}(\mathbb{R}))\ \ and\ \ \forall v\in U_{\Lambda}(\mathbb{R})\\
\\
\int\left(\partial_{t}u\right)vdx=-\int\left(u\partial_{x}u\right)vdx;\\
\\
u\left(0,x\right)=u_{0}\left(x\right),
\end{array}\right.\label{BEG}
\end{equation}
were $u_{0}\in U_{\Lambda}(\mathbb{R})$ (mostly, we will consider
the case where $u_{0}\in\left(H_{c}^{1}(\mathbb{R})\right)^{\sigma}$).
Let us recall that, by Definition \ref{def:C kappa}, for every $u\in\mathcal{C}^{1}(I^{\ast},U_{\Lambda}(\mathbb{R}))$,
we have $\partial_{t}u(t,\cdot)\in U_{\Lambda}(\mathbb{R})$.

We have the following result: 
\begin{thm}
\label{elisa}For every initial data $u_{0}\in U_{\Lambda}(\mathbb{R})$
the problem (\ref{BEG}) has a GUS. \end{thm}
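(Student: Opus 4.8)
The plan is to reduce Theorem \ref{elisa} to the local existence/uniqueness result Theorem \ref{TT}, and then upgrade the local GUS to a global one using Corollary \ref{coco} with the energy $E(u)=\tfrac12\int u^2\,dx$ as the coercive integral of motion. So the first step is to verify the hypothesis of Theorem \ref{TT}: that the operator $\mathcal{A}(u)=-u\,\partial_x u$, restricted to each finite-dimensional approximating space $V_\lambda(\mathbb{R})\subset U_\Lambda(\mathbb{R})$, is locally Lipschitz continuous. This is a routine check — on a finite-dimensional space, $u\mapsto u\cdot Du$ is a composition of the (bounded, linear hence Lipschitz) derivative operator $D$ with pointwise multiplication, which is Lipschitz on bounded sets — but one should be slightly careful because $D$ here is the \emph{ultrafunction} derivative defined by duality in \eqref{sd}, so one works with the net of approximate problems \eqref{gonerilla}/\eqref{regana} on the standard finite-dimensional spaces $V_\lambda(\mathbb{R})$ and takes $\Lambda$-limits, exactly as in the proof of Theorem \ref{TT}. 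This yields a local GUS $u_\Lambda$ on some $[0,T_\Lambda(u_0))$.

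The second, main step is to show global existence, i.e. that $T_\Lambda(u_0)=\infty$ (or $=T$ for whatever final time we fix). Here I would invoke Corollary \ref{coco} with $I(u)=E(u)=\tfrac12\int u^2\,dx$. Two things must be checked. First, coercivity of $I|_{V_\lambda(\mathbb{R})}$: this is immediate, since $E(u)=\tfrac12\|u\|^2$ in the $L^2$-norm and $V_\lambda(\mathbb{R})$ is finite-dimensional, so $\|u_n\|\to\infty$ forces $E(u_n)\to\infty$. Second, and this is the delicate point, one must show that along a local GUS the energy is non-increasing (in fact constant). Formally, using the equation in weak form \eqref{BEG} with the test function $v=u(t,\cdot)\in U_\Lambda(\mathbb{R})$ itself, we get
\[
\frac{d}{dt}\,\tfrac12\int u^2\,dx=\int u\,\partial_t u\,dx=-\int u^2\,(Du)\,dx.
\]
The claim is that $\int u^2\,(Du)\,dx=0$. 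This is where the special properties of $U_\Lambda(\mathbb{R})$ from Theorem \ref{vecchiaroba} enter: property \eqref{enu: Thm 23 (4)} (the integration-by-parts rule $\int (Du)v\,dx=-\int u(Dv)\,dx$) should give, at least morally, $\int u^2\,Du\,dx=\tfrac13\int D(u^3)\,dx$, and then property \eqref{enu: Thm 23 (3)} together with the fact that $D$ of a "total derivative" of an ultrafunction in $U_\Lambda(\mathbb{R})$ integrates to zero over $[-\beta,\beta]$ (by the Remark following Theorem \ref{vecchiaroba}, $\int^* w\,dx=\int_{-\beta}^{\beta}w\,dx$, and the boundary values cancel because elements of $U_\Lambda$ satisfy $u(-\beta)=u(\beta)$) kills the integral.

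The main obstacle I anticipate is precisely this last identity: in the ultrafunction setting $D$ is a duality derivative, not a pointwise one, so the chain rule $Du^3 = 3u^2\,Du$ and the "fundamental theorem of calculus" $\int^* Dw\,dx = w(\beta)-w(-\beta)$ are \emph{not} automatic — they have to be justified from the axioms in Theorem \ref{vecchiaroba}, and in fact the honest computation of $\frac{d}{dt}E(u_\Lambda)$ is likely carried out by passing back through the approximating net: on each $V_\lambda(\mathbb{R})$ one has genuine smooth functions, so $\frac{d}{dt}\int u_\lambda^2\,dx = -\int u_\lambda^2\,\partial_x u_\lambda\,dx = -\tfrac13\int \partial_x(u_\lambda^3)\,dx = 0$ by the periodic/compact-support boundary conditions built into $H^1_\flat(\mathbb{R})$, and then one takes the $\Lambda$-limit of the equality $\frac{d}{dt}E(u_\lambda)=0$. (This is morally the content of the forthcoming Theorem \ref{eliseo} on conservation of energy, which the present theorem's proof may simply anticipate or defer to.) Granting that the energy is constant along the local GUS, Corollary \ref{coco} applies verbatim and extends $u_\Lambda$ to the full time interval, completing the proof.
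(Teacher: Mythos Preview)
Your proposal is correct and follows essentially the same route as the paper: apply Theorem~\ref{TT} for local existence, then Corollary~\ref{coco} with $I(u)=E(u)=\tfrac12\int u^2\,dx$, checking that $\partial_t E(u)=0$ by testing against $v=u$.

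One clarification worth making: you worry about justifying the chain rule and fundamental theorem of calculus for the \emph{ultrafunction} derivative $D$, but note that problem~\eqref{BEG} is written with $\partial_x$, not $D$. Since $U_\Lambda(\mathbb{R})\subset[H^1_\flat(\mathbb{R})]^*$, each $u(t,\cdot)$ is (internally) a genuine $H^1$ function supported in $[-\beta,\beta]$ with $u(\beta)=u(-\beta)$. The paper therefore computes directly at the $\Lambda$-level: using $\widetilde{1}=\mathbf{1}_{[-\beta,\beta]}$ (Theorem~\ref{vecchiaroba}(\ref{enu: Thm 23 (2)})) one has $\int u^2\partial_x u\,dx=\int_{-\beta}^{\beta}u^2\partial_x u\,dx=\tfrac13\int_{-\beta}^{\beta}\partial_x(u^3)\,dx=0$, where the chain rule and FTC hold by transfer and the boundary terms cancel by the periodic condition. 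So your detour through the approximating net, while valid, is unnecessary.
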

\begin{proof}
It is sufficient to apply Theorem \ref{TT} to obtain the local existence
of a GUS $u$, and then Corollary \ref{coco} with 
\[
I(w)=E(w)=\frac{1}{2}\int w^{2}dx
\]
to deduce that the local GUS is, actually, global. In fact if we take
$v(t,x)=u(t,x)$ in the weak equation that defines Problem (\ref{BEG}),
we get 
\begin{eqnarray*}
\int\left(\partial_{t}u\right)udx & = & -\int\left[u\partial_{x}u\right]udx=\\
-\int\left[u\partial_{x}u\right]u\cdot\widetilde{1}dx & = & -\int_{-\beta}^{\beta}\left[u\partial_{x}u\right]udx=\\
-\frac{1}{3}\int_{-\beta}^{\beta}\partial_{x}u^{3}dx & = & 0,
\end{eqnarray*}
as $u(\beta)=u(-\beta)$. Then 
\[
\partial_{t}E(u)=0
\]
and hence Corollary \ref{coco} can be applied.\end{proof}
\begin{thm}
\label{eliseo}Problem (\ref{BEG}) has two constants of motion: the
energy 
\[
E=\frac{1}{2}\int u^{2}\ dx
\]
and the momentum 
\[
P=\int u\ dx.
\]
\end{thm}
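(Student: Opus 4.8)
The plan is to compute the time derivatives of $E$ and $P$ directly along the GUS, exploiting the defining weak equation of Problem~(\ref{BEG}) together with the special properties of the space $U_{\Lambda}(\mathbb{R})$ guaranteed by Theorem~\ref{vecchiaroba}. The key point is that $\widetilde{1}=1_{[-\beta,\beta]}$ acts as the identity on $U_{\Lambda}(\mathbb{R})$, that $D\widetilde{1}=0$, and that the integration-by-parts formula $\int^{\ast}(Du)v\,dx=-\int^{\ast}u(Dv)\,dx$ holds for all $u,v\in U_{\Lambda}(\mathbb{R})$, so that boundary contributions at $\pm\beta$ vanish because ultrafunctions in this space satisfy $u(-\beta)=u(\beta)$.

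For the energy, I would test the weak equation with $v=u(t,\cdot)\in U_{\Lambda}(\mathbb{R})$, exactly as in the proof of Theorem~\ref{elisa}: this gives $\partial_{t}E(u)=\int^{\ast}(\partial_{t}u)u\,dx=-\int^{\ast}(u\,\partial_{x}u)u\,dx=-\frac{1}{3}\int^{\ast}\partial_{x}(u^{3})\,dx$, and since $\int^{\ast}w\,dx=\int_{-\beta}^{\beta}w\,dx$ for $w\in U_{\Lambda}(\mathbb{R})$ (by the Remark following Theorem~\ref{vecchiaroba}) this last integral is $u(\beta)^{3}-u(-\beta)^{3}=0$. Hence $\partial_{t}E=0$, so $E$ is a constant of motion; this step has essentially already been carried out inside the proof of Theorem~\ref{elisa}, so it only needs to be restated.

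For the momentum, the natural choice is to test with $v=\widetilde{1}\in U_{\Lambda}(\mathbb{R})$. Then $\partial_{t}P=\partial_{t}\int^{\ast}u\,dx=\int^{\ast}(\partial_{t}u)\widetilde{1}\,dx=-\int^{\ast}(u\,\partial_{x}u)\widetilde{1}\,dx=-\frac{1}{2}\int^{\ast}\partial_{x}(u^{2})\widetilde{1}\,dx$. Here one must be slightly careful: the term $u\,\partial_{x}u$ appearing in~(\ref{BEG}) should be read as the ultrafunction $\frac{1}{2}D(u^{2})$ (or $u\cdot Du$), and one uses the integration-by-parts property~(\ref{enu: Thm 23 (4)}) together with $D\widetilde{1}=0$ to get $\int^{\ast}D(u^{2})\,\widetilde{1}\,dx=-\int^{\ast}u^{2}\,D\widetilde{1}\,dx=0$. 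Alternatively, and perhaps more transparently, $\int^{\ast}\partial_{x}(u^{2})\widetilde{1}\,dx=\int_{-\beta}^{\beta}\partial_{x}(u^{2})\,dx=u(\beta)^{2}-u(-\beta)^{2}=0$. Either way $\partial_{t}P=0$, so $P$ is conserved.

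The main obstacle, such as it is, is bookkeeping rather than substance: one has to make sure that the nonlinear term $u\partial_{x}u$ in the GUS equation is being interpreted consistently (as an element of $U_{\Lambda}(\mathbb{R})$, namely $P_{\Lambda}$ applied to the natural extension of the classical operator, equivalently $\tfrac12 D(u^2)$) and that $\widetilde{1}$ and $u$ both genuinely lie in $U_{\Lambda}(\mathbb{R})$ so that they are legitimate test functions and the properties of Theorem~\ref{vecchiaroba} apply. Once that is pinned down, both computations close immediately using only $\widetilde{1}\cdot u=u$, $D\widetilde{1}=0$, the anti-symmetry of $D$ on $U_{\Lambda}(\mathbb{R})$, and the fact that integration over $\mathbb{R}^{\ast}$ against ultrafunctions reduces to integration over $[-\beta,\beta]$ with matching endpoint values.
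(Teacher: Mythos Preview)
Your proposal is correct and follows essentially the same approach as the paper: the energy conservation is exactly the computation already done in the proof of Theorem~\ref{elisa}, and for the momentum the paper, like you, tests with $v=\widetilde{1}$ and reduces $-\int u\partial_{x}u\,\widetilde{1}\,dx$ to $-\tfrac12\int_{-\beta}^{\beta}\partial_{x}(u^{2})\,dx=0$ via $u(-\beta)=u(\beta)$. Your additional remarks about interpreting $u\partial_{x}u$ and the alternative justification via $D\widetilde{1}=0$ and the anti-symmetry of $D$ are fine elaborations, but not a different route.
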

\begin{proof}
We already proved that the energy is constant in the proof of Th.
\ref{elisa}. In order to prove that also $P$ is constant take $v=\widetilde{1}\in U_{\Lambda}(\mathbb{R})$
in equation (\ref{BEG}). Then we get 
\[
\partial_{t}P=\partial_{t}\int udx=\int\partial_{t}u\widetilde{1}dx=-\int u\partial_{x}u\widetilde{1}dx=-\frac{1}{2}\int_{-\beta}^{+\beta}\partial_{x}u^{2}dx=0,
\]
as $u(-\beta)=u(\beta)$. 
\end{proof}
Let us notice that Theorems \ref{elisa} and \ref{eliseo} hold even
if $u_{0}$ is a very singular object, e.g. a delta-like ultrafunction. 
\begin{rem}
Prop. \ref{tina} shows that the strong solutions of (\ref{BE}) have
infinitely many constants of motion; is this fact true for the GUS?
Let us try to prove that 
\[
\int G(u(t,x))dx
\]
is constant following the same proof used in Thm. \ref{elisa} and
\ref{eliseo}. We set 
\[
v(t,x)=P_{\Lambda}G^{\prime}(u)\in C(I^{\ast},U_{\Lambda})
\]
and we replace it in eq. (\ref{BEG}), so that 
\begin{eqnarray*}
\partial_{t}\int G(u(t,x))dx & = & \int\partial_{t}uG^{\prime}(u)dx\\
 & = & \int\partial_{t}uP_{\Lambda}G^{\prime}(u)dx\ \ (\text{since\ }\partial_{t}u(t,\cdot)\in U_{\Lambda})\\
 & = & -\int u\partial_{x}uP_{\Lambda}G^{\prime}(u)dx.
\end{eqnarray*}

\end{rem}
Now, if we assume that $G^{\prime}(u(t,.))\in U_{\Lambda}(\mathbb{R})$,
we have that $P_{\Lambda}G^{\prime}(u)=G^{\prime}(u)$ and hence 
\begin{eqnarray*}
\partial_{t}\int G(u(t,x))dx & = & -\int u\partial_{x}uG^{\prime}(u)dx\\
 & = & -\int\partial_{x}H(u)dx=0
\end{eqnarray*}
where $H(u)$ is defined by (\ref{lilla}). Thus $\int G(u(t,x))dx$
is a constant of motion provided that 
\begin{equation}
G^{\prime}(u)\in C(I,U_{\Lambda}).\label{romina}
\end{equation}
However, this is only a sufficient condition. Clearly, in general
the analogous of condition (\ref{romina}) will depend on the choice
of the space of ultrafunctions $V_{\Lambda}(\mathbb{R})$: different
choices of this space will give different constants of motion. Our
choice $V_{\Lambda}(\mathbb{R})=U_{\Lambda}(\Omega)$ was motivated
by the fact that GUS of equation (\ref{BEG}) in $U_{\Lambda}(\Omega)$
preserves both the energy and the momentum.

\subsection{GUS and weak solutions of BE}

In this section we consider equation (\ref{BEG}) with $u_{0}\in\left(H_{c}^{1}(\mathbb{R})\right)^{\sigma}$.
Our first result is the following: 
\begin{thm}
\label{thm:approssimazione}Let $u$ be the GUS of problem (\ref{BEG})
with initial data $u_{0}\in\left(H_{c}^{1}(\mathbb{R})\right)^{\sigma}$.
Then $[u]_{\mathscr{D}(I\times\Omega)}$ is a weak solution of problem
\ref{BE}.\end{thm}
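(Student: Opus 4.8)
\medskip
\noindent\textbf{Proof strategy.}
The plan is to imitate the proof of Theorem~\ref{mina}: run the defining equation of the GUS against hyperextended test functions, integrate by parts, and pass to standard parts — the only new difficulty being the nonlinear term. Fix $\varphi\in\mathscr{D}(I\times\mathbb{R})$ and pick $T'<T$, $M<\infty$ with $\mathrm{supp}\,\varphi\subseteq[0,T']\times[-M,M]$. For each $t\in I^{\ast}$ the section $\varphi^{\ast}(t,\cdot)$ is the natural extension of a function in $\mathscr{D}(\mathbb{R})\subseteq H_{c}^{1}(\mathbb{R})$, hence lies in $U_{\Lambda}(\mathbb{R})$ by condition~(1) of Theorem~\ref{vecchiaroba}. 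Testing the weak equation in (\ref{BEG}) with $v=\varphi^{\ast}(t,\cdot)$, integrating over $t\in I$, and integrating by parts in $t$ (the boundary term at $t=T$ vanishes since $\varphi$ is compactly supported in $I$, the one at $t=0$ produces $u_{0}$), I obtain
\[
\int_{0}^{T}\!\!\int^{\ast}u\,\partial_{t}\varphi^{\ast}\,dx\,dt+\int^{\ast}u_{0}\,\varphi^{\ast}(0,\cdot)\,dx=\int_{0}^{T}\!\!\int^{\ast}(u\,\partial_{x}u)\,\varphi^{\ast}\,dx\,dt .
\]
On the right-hand side $u\,\partial_{x}u=\tfrac{1}{2}\partial_{x}(u^{2})$ (transfer of the Leibniz rule for $H^{1}$ functions), and since $\varphi^{\ast}$ is supported in $[-M,M]^{\ast}$ with $M$ finite, transfer of the classical integration-by-parts formula on a bounded interval containing $\mathrm{supp}\,\varphi$ gives $\int^{\ast}(u\,\partial_{x}u)\varphi^{\ast}\,dx=-\tfrac{1}{2}\int^{\ast}u^{2}\,\partial_{x}\varphi^{\ast}\,dx$. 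Thus the GUS satisfies the exact $\mathbb{R}^{\ast}$-identity
\begin{equation}
\int_{0}^{T}\!\!\int^{\ast}u\,\partial_{t}\varphi^{\ast}\,dx\,dt+\int^{\ast}u_{0}\,\varphi^{\ast}(0,\cdot)\,dx+\frac{1}{2}\int_{0}^{T}\!\!\int^{\ast}u^{2}\,\partial_{x}\varphi^{\ast}\,dx\,dt=0.\tag{$\ast$}
\end{equation}

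Next, write $u_{0}=\phi^{\ast}$ with $\phi\in H_{c}^{1}(\mathbb{R})$. By conservation of the energy (Theorem~\ref{elisa}) one has $\int^{\ast}u^{2}(t,\cdot)\,dx=\int^{\ast}(\phi^{\ast})^{2}\,dx=\big(\int\phi^{2}\,dx\big)^{\ast}$, a finite number independent of $t$; together with Cauchy--Schwarz on the finite interval $[-M,M]$ this makes each of the three integrals in $(\ast)$ a finite hyperreal, so $sh$ may be applied term by term. For the linear terms, the definition (\ref{pipa}) of $[u]_{\mathscr{D}}$, applied to the test function $\partial_{t}\varphi$, gives $sh\big(\int_{0}^{T}\!\int^{\ast}u\,\partial_{t}\varphi^{\ast}\big)=\langle[u]_{\mathscr{D}},\partial_{t}\varphi\rangle$, while $sh\big(\int^{\ast}u_{0}\,\varphi^{\ast}(0,\cdot)\big)=\int\phi\,\varphi(0,x)\,dx=\int u_{0}(x)\varphi(0,x)\,dx$.

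The heart of the matter is the nonlinear term: I must show that $sh\big(\tfrac{1}{2}\int_{0}^{T}\!\int^{\ast}u^{2}\,\partial_{x}\varphi^{\ast}\big)=\tfrac{1}{2}\int\!\int w^{2}\,\partial_{x}\varphi\,dx\,dt$, where $w:=[u]_{\mathscr{D}}$; equivalently, $[u^{2}]_{\mathscr{D}}=\big([u]_{\mathscr{D}}\big)^{2}$, and in particular $w$ is an $L_{loc}^{1}(I\times\mathbb{R})$ function. This is exactly where the hypothesis $u_{0}\in(H_{c}^{1}(\mathbb{R}))^{\sigma}$ is used. The plan is to establish a uniform (in $t\in I$ and in $\lambda\in\mathfrak{L}$) a priori bound on the spatial total variation $\int_{-M}^{M}|\partial_{x}u_{\lambda}(t,\cdot)|\,dx$ of the finite-dimensional approximants $u_{\lambda}$ built in the proof of Theorem~\ref{TT}, exploiting that $\phi$ is smooth with compact support and that $\partial_{x}u$ formally obeys the Riccati-type equation $\partial_{t}(\partial_{x}u)+\partial_{x}(u\,\partial_{x}u)=0$ of a scalar conservation law (so that total variation does not increase). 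Such a bound, combined with the uniform $L^{2}$ bound above, makes the net $\left.u_{\lambda}(t,\cdot)\right|_{[-M,M]}$ relatively compact in $L^{1}([-M,M])$ by Helly's selection theorem; hence $u(t,\cdot)$ is infinitely close in $[L^{1}([-M,M])]^{\ast}$ to the natural extension of its shadow $w(t,\cdot)\in L^{1}([-M,M])$, and, the $L^{\infty}$-norm being controlled as well, $u^{2}(t,\cdot)$ is infinitely close to $(w(t,\cdot)^{2})^{\ast}$. Integrating in $t$ over $[0,T']$ then yields $[u^{2}]_{\mathscr{D}}=w^{2}$ as a locally integrable distribution. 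Applying $sh$ to $(\ast)$ and collecting the three limits produces exactly the weak formulation (\ref{zanna}) for $w=[u]_{\mathscr{D}}$.

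I expect the genuinely hard step to be this uniform total-variation (equivalently, $L_{loc}^{1}$-compactness) bound for the Galerkin-type approximations; the $\Lambda$-limit bookkeeping and the two integrations by parts are routine. Controlling high-frequency oscillations in $u^{2}$ is precisely the obstruction that prevents the Burgers operator from being weakly continuous in the sense of Definition~\ref{wc} (so that Theorem~\ref{mina} cannot be invoked directly), which is why one restricts here to smooth, compactly supported data; an alternative to Helly would be a compensated-compactness argument — Young measures generated by the $u_{\lambda}$ reducing to Dirac masses — based on an a priori entropy-dissipation estimate for the approximants.
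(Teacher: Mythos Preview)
The paper's own argument is far shorter than yours: it simply observes that, by conservation of energy (Theorem~\ref{eliseo}) and Cauchy--Schwarz, the pairing $\int u\,\varphi^{\ast}$ is finite for every test function, so $u\in\left[\mathcal{C}^{1}(I^{\ast},U_{\Lambda})\right]_{B}$, and then invokes Theorem~\ref{mina} directly. No separate treatment of the nonlinear term is given; in particular the paper does \emph{not} carry out the compactness argument you outline.

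You have in fact put your finger on the delicate point that the paper glosses over: Theorem~\ref{mina} is stated under the hypothesis that $\mathcal{A}$ is weakly continuous in the sense of Definition~\ref{wc}, and the Burgers operator $u\mapsto -u\,\partial_{x}u$ is not weakly continuous in any obvious sense --- precisely because one must know $[u^{2}]_{\mathscr{D}}=([u]_{\mathscr{D}})^{2}$ in order to land on the weak formulation~(\ref{zanna}). Your identity~$(\ast)$ and the finiteness of each term are correct and routine; the substantive question is exactly the one you isolate.

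However, your proposed mechanism for closing that gap has its own gap. A uniform $BV$ (or total-variation) bound for the \emph{Galerkin} approximants $u_{\lambda}$ is not something one can read off from the formal identity $\partial_{t}(\partial_{x}u)+\partial_{x}(u\,\partial_{x}u)=0$: that identity holds for exact solutions, but after projecting onto $V_{\lambda}$ the equation for $\partial_{x}u_{\lambda}$ acquires a commutator term $[P_{\lambda},\,u_{\lambda}\partial_{x}]$ which there is no a~priori reason to control in $L^{1}$. Galerkin schemes on generic finite-dimensional subspaces of $H^{1}_{\flat}$ typically do \emph{not} propagate $BV$ bounds, so Helly is not available without further structure on the approximating net (e.g.\ a spectral or finite-volume choice with a discrete maximum principle). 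Your alternative suggestion --- compensated compactness / Tartar--Murat via entropy dissipation --- runs into the same obstacle: for the exact equation every entropy is conserved, but for the projected system one would need a uniform $H^{-1}_{loc}$ bound on $\partial_{t}\eta(u_{\lambda})+\partial_{x}q(u_{\lambda})$, and the projection error again intervenes. So the strategy is reasonable in spirit, but the key estimate is asserted rather than proved, and it is not clear it holds at the level of generality of the spaces $U_{\lambda}$ used here.
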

\begin{proof}
From Theorem \ref{elisa} we know that the problem admits a GUS $u$,
and from Theorem \ref{eliseo} we deduce that $[u]_{\mathscr{D}}$
is a bounded generalized distribution: in fact, for every $\varphi\in\mathscr{D}(I\times\mathbb{R})$
we have 
\[
\left|\int\overline{u}\varphi dx\right|\leq\left(\int\overline{u}^{2}dx\right)^{\frac{1}{2}}\left(\int\varphi^{2}dx\right)^{\frac{1}{2}}<+\infty
\]
as $\int\overline{u}^{2}dx=\int u_{0}^{2}dx<+\infty$ by the conservation
of energy on GUS. Therefore, from Th.~\ref{mina} we deduce that
$w:=\left[\bar{u}\right]_{\mathscr{D}(I\times\Omega)}$ is a weak
solution of problem \ref{BE}. 
\end{proof}
Thus the GUS of problem \ref{BEG} is unique and it is associated
with a weak solution of problem \ref{BE}. It is well known (see e.g.
\cite{Bianchini} and references therein) that weak solutions of (\ref{BE})
are not unique: hence, in a certain sense, the ultrafunctions give
a way to choose a particular weak solution among the (usually infinite)
weak solutions of problem \ref{BE}.

However, among the weak solutions there is one that is of special
interest, namely the \textbf{entropy solution}. The entropy solution
is the only weak solution of (\ref{BE}) satisfying particular conditions
(the entropy conditions) along the curves of discontinuity of the
solution (see e.g. \cite{evans}, Chapter 3). For our purposes, we
are interested in the equivalent characterization of the entropy solution
as the limit, for\footnote{In this approach, $\nu$ is usally called the viscosity.}
$\nu\rightarrow0$, of the solutions of the following parabolic equations:
\begin{equation}
\frac{\partial u}{\partial t}+u\frac{\partial u}{\partial x}=\nu\frac{\partial^{2}u}{\partial x^{2}}\label{VBE}
\end{equation}
(see e.g. \cite{Hopf} for a detailed study of such equations). These
equations are called the viscous Burgers' equations and they have
smooth solutions in any reasonable function space. In particular,
in Lemma \ref{LEmmaEntropy}, we will prove that the problem \ref{VBE}
has a unique GUS in $U_{\Lambda}(\mathbb{R})$ for every initial data
$u_{0}\in U_{\Lambda}(\mathbb{R})$. Now, if $\bar{u}$ is the GUS
of problem \ref{VBE} with a classical initial condition $u_{0}\in L^{2}(\mathbb{R})$,
then $[\overline{u}]_{\mathscr{D}}$ is bounded: in fact, for every
$\varphi\in\mathscr{D}(I\times\mathbb{R})$ we have 
\[
\left|\int\overline{u}\varphi dx\right|\leq\left(\int\overline{u}^{2}dx\right)^{\frac{1}{2}}\left(\int\varphi^{2}dx\right)^{\frac{1}{2}}<+\infty
\]
as $\int\overline{u}^{2}dx\leq\int u_{0}^{2}dx<+\infty$. Therefore,
from Th. \ref{mina} we deduce that $w:=\left[\bar{u}\right]_{\mathscr{D}(I\times\Omega)}$
is a weak solution.

We are now going to prove that it is possible to choose $\nu$ infinitesimal
in such a way that $w$ is the entropy solution. This fact is interesting
since it shows that this GUS represents properly, from a Physical
point of view, the phenomenon described by Burgers' equation. In order
to see this let us consider the problem (\ref{VBE}) with $\nu$ hyperreal. 
\begin{lem}
\label{LEmmaEntropy}The problem 
\begin{equation}
\left\{ \begin{array}{c}
u\in\mathcal{C}^{1}(I,U_{\Lambda}(\Omega))\ \text{and}\ \forall v\in U_{\Lambda}(\mathbb{R})\\
\\
\int\left(\partial_{t}u(t,x)+u\partial_{x}u(t,x)\right)v(x)dx=\int\nu\partial_{x}^{2}u(t,x)v(x)dx,\\
\\
u\left(0\right)=u_{0}
\end{array}\right.\label{dora}
\end{equation}
has a unique GUS for every $\nu\in\left(\mathbb{R}^{+}\right)^{\ast}$
and every $u_{0}\in U_{\Lambda}(\mathbb{R})$. \end{lem}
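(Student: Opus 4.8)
The plan is to derive local existence and uniqueness from Theorem~\ref{TT}, and then to promote the local GUS to a global one by means of Corollary~\ref{coco}, following the same pattern used for the inviscid equation in Theorems~\ref{elisa} and~\ref{eliseo}. Setting $\mathcal{A}(u)=-u\,\partial_{x}u+\nu\,\partial_{x}^{2}u$, problem~(\ref{dora}) becomes an instance of~(\ref{cp}) with this $\mathcal{A}$. On each finite dimensional space $V_{\lambda}(\mathbb{R})$ of the approximating net for $U_{\Lambda}(\mathbb{R})$, the map $u\mapsto u\,\partial_{x}u$ is a quadratic polynomial map, hence smooth, and $u\mapsto\nu\,\partial_{x}^{2}u$ is linear; thus $\mathcal{A}|_{V_{\lambda}(\mathbb{R})}$ is locally Lipschitz continuous for every $\lambda\in\mathfrak{L}$. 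Theorem~\ref{TT} then provides a number $T_{\Lambda}(u_{0})\in(0,T]_{\mathbb{R}^{\ast}}$ and a unique GUS $u$ of~(\ref{dora}) on $[0,T_{\Lambda}(u_{0}))_{\mathbb{R}^{\ast}}$.

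To show $T_{\Lambda}(u_{0})=T$, I would invoke Corollary~\ref{coco} with $I(w)=E(w)=\tfrac{1}{2}\int w^{2}\,dx$, which is coercive on every $V_{\lambda}(\mathbb{R})$ since $E(w)=\tfrac{1}{2}\|w\|^{2}$. It remains to verify that $t\mapsto E^{\ast}(u(t))$ does not increase along any local GUS. Choosing $v=u(t,\cdot)\in U_{\Lambda}(\mathbb{R})$ as test function in the weak equation of~(\ref{dora}) yields
\[
\partial_{t}E(u)=\int(\partial_{t}u)\,u\,dx=-\int u^{2}\,\partial_{x}u\,dx+\nu\int(\partial_{x}^{2}u)\,u\,dx.
\]
The first summand vanishes exactly as in the proof of Theorem~\ref{elisa}: using $u\cdot\widetilde{1}=u$, the identity $\int^{\ast}=\int_{-\beta}^{\beta}$ on $U_{\Lambda}(\mathbb{R})$, property~(\ref{enu: Thm 23 (4)}) of Theorem~\ref{vecchiaroba} and the boundary condition $u(\beta)=u(-\beta)$, one obtains $-\int u^{2}\,\partial_{x}u\,dx=-\tfrac{1}{3}\int_{-\beta}^{\beta}\partial_{x}(u^{3})\,dx=0$. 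For the viscous summand, a single integration by parts (again by property~(\ref{enu: Thm 23 (4)}) of Theorem~\ref{vecchiaroba}) gives $\nu\int(\partial_{x}^{2}u)\,u\,dx=-\nu\int(\partial_{x}u)^{2}\,dx\le0$, because $\nu>0$ and, by transfer, the internal function $(\partial_{x}u)^{2}$ is nonnegative and hence so is its hyperfinite integral. Thus $\partial_{t}E^{\ast}(u(t))\le0$, Corollary~\ref{coco} applies, and $u$ extends to a GUS on the whole of $[0,T)$; uniqueness of this global GUS follows from the uniqueness part of Theorem~\ref{TT} applied on each subinterval.

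The only point requiring care is the meaning of the viscous term $\partial_{x}^{2}u$ within the ultrafunction framework: one must check that the second weak derivative is a well-defined operator from the underlying space $H_{\flat}^{1}(\mathbb{R})$ into its dual, so that $\mathcal{A}^{\ast}$ and $\widetilde{\mathcal{A}}=P_{\Lambda}\circ\mathcal{A}^{\ast}$ make sense in accordance with Definition~\ref{b}, and that the integrations by parts used in the energy computation are legitimate in $U_{\Lambda}(\mathbb{R})$. Both facts follow from $U_{\Lambda}(\mathbb{R})\subseteq\left[H_{\flat}^{1}(\mathbb{R})\right]^{\ast}$ together with property~(\ref{enu: Thm 23 (4)}) of Theorem~\ref{vecchiaroba}; once these are granted, the remaining argument is the same bookkeeping as in Theorems~\ref{elisa} and~\ref{eliseo}, and I expect this to be the main (albeit modest) obstacle.
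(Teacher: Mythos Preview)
Your energy argument is sound and actually supplies the justification that the paper omits: the paper merely asserts that each approximate problem ``has a unique solution $u_{\lambda}$'' without saying why it is global, whereas your computation $\partial_{t}E(u)=-\nu\int(\partial_{x}u)^{2}\,dx\le 0$ is exactly what makes Corollary~\ref{coco} applicable at the finite-dimensional level.

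There is, however, a genuine formal gap in your route. Theorem~\ref{TT} is stated for a \emph{standard} operator $\mathcal{A}:V(\Omega)\to V'(\Omega)$, whose restriction to each standard $V_{\lambda}(\Omega)$ is then required to be locally Lipschitz. With $\nu\in(\mathbb{R}^{+})^{\ast}$ possibly infinitesimal or infinite, your $\mathcal{A}(u)=-u\,\partial_{x}u+\nu\,\partial_{x}^{2}u$ is not a map $V(\Omega)\to V'(\Omega)$ at all, so neither Theorem~\ref{TT} nor Corollary~\ref{coco} applies as written. The sentence ``$u\mapsto\nu\,\partial_{x}^{2}u$ is linear'' on $V_{\lambda}(\mathbb{R})$ is where this slips by: for $\lambda\in\mathfrak{L}$ the space $V_{\lambda}(\mathbb{R})$ is a real vector space, and multiplication by a hyperreal $\nu$ does not send it to itself.

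The paper fixes this by representing $\nu=\lim_{\lambda\uparrow\Lambda}\nu_{\lambda}$ and $u_{0}=\lim_{\lambda\uparrow\Lambda}u_{0,\lambda}$ with $\nu_{\lambda}\in\mathbb{R}^{+}$, $u_{0,\lambda}\in U_{\lambda}(\mathbb{R})$, solving the genuinely standard approximate problems~(\ref{jessica}) with viscosity $\nu_{\lambda}$ on each $U_{\lambda}(\mathbb{R})$, and then taking the $\Lambda$-limit. Your energy estimate, carried out verbatim on each $U_{\lambda}(\mathbb{R})$ with the real $\nu_{\lambda}>0$, is precisely what guarantees that each $u_{\lambda}$ is global; so once you insert this decomposition step, your argument and the paper's merge into the same proof, with yours being the more explicit of the two.
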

\begin{proof}
Let $(U_{\lambda}(\mathbb{R}))_{\lambda\in\mathfrak{L}}$ be an approximating
net of $U_{\Lambda}(\mathbb{R})$. Since $\nu\in\left(\mathbb{R}^{+}\right)^{\ast}$
and $u_{0}\in U_{\Lambda}(\mathbb{R})$, we have that for every $\lambda\in\mathfrak{L}$
there exist $\nu_{\lambda}\in\mathbb{R}^{+}$ and $u_{0,\lambda}\in U_{\lambda}(\mathbb{R})$
such that 
\[
\nu=\lim_{\lambda\uparrow\Lambda}\nu_{\lambda}\ \text{and}\ u_{0}=\lim_{\lambda\uparrow\Lambda}u_{0,\lambda}.
\]
Thus, we can consider the approximate problems 
\begin{equation}
\left\{ \begin{array}{c}
u\in\mathcal{C}^{1}(I,U_{\lambda}(\mathbb{R}))\ and\ \ \forall v\in U_{\lambda}(\mathbb{R})\\
\\
\int\left(\partial_{t}u(t,x)+u\partial_{x}u(t,x)\right)v(x)dx=\int\nu\partial_{x}^{2}u(t,x)v(x)dx,\\
\\
u\left(0\right)=u_{0,\lambda}.
\end{array}\right.\label{jessica}
\end{equation}
For every $\lambda$, the problem (\ref{jessica}) has a unique solution
$u_{\lambda}$. If we let $u_{\Lambda}=\lim_{\lambda\uparrow\Lambda}u_{\lambda}$
we have that $u_{\Lambda}$ is the unique ultrafunction solution of
problem (\ref{dora}). 
\end{proof}
Let us call $u_{\nu}$ the GUS of Problem (\ref{dora}). A natural
conjecture would be that, if $u_{0}$ is standard, then for every
$\nu$ infinitesimal the distribution $\left[u_{\nu}\right]_{\mathscr{D}(I\times\Omega)}$
is the entropy solution of Burgers' equation. However, as we are going
to show in the following Theorem, in general this property is true
only ``when $\nu$ is a \emph{large} infinitesimal'': 
\begin{thm}
\label{EntropyGUS}Let $u_{0}$ be standard, let $z$ be the entropy
solution of Problem \ref{BE} with initial condition $u_{0}$ and,
for every $\nu\in\mathbb{R^{\ast}}$, let $u_{\nu}$ be the solution
of Problem \ref{dora} with initial condition $u_{0}^{\ast}$. Then
there exists an infinitesimal number $\nu_{0}$ such that, for every
infinitesimal $\nu\geq\nu_{0}$, $\left[u_{\nu}\right]_{\mathscr{D}(I\times\Omega)}=z$;
namely, the GUS of Problem \ref{dora}, for every infinitesimal $\nu\geq\nu_{0}$,
correspond (in the sense of Definition \ref{DEfCorrespondenceDistrUltra})
to the entropy solution of Problem \ref{BE}.\end{thm}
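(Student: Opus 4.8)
The plan is to show that, for a suitable final segment of infinitesimal viscosities $\nu$, the generalized distribution $\left[u_{\nu}\right]_{\mathscr D(I\times\Omega)}$ coincides with $\left[(w^{(\nu)})^{*}\right]_{\mathscr D(I\times\Omega)}$, where $w^{(\nu)}$ is the classical smooth solution of the viscous equation (\ref{VBE}) with datum $u_{0}$, and then to observe that this last distribution already equals the entropy solution $z$ for \emph{every} positive infinitesimal $\nu$. The second point is the soft one: for fixed $\varphi\in\mathscr D(I\times\Omega)$ the real function $g_{\varphi}(\nu):=\int\int w^{(\nu)}\varphi\,dx\,dt$ satisfies $g_{\varphi}(\nu)\to\langle z,\varphi\rangle$ as $\nu\to0^{+}$ --- this being exactly the vanishing-viscosity characterization of the entropy solution (see \cite{Hopf}, \cite{evans}) --- so by transfer $g_{\varphi}^{*}(\nu)=\int\int^{*}(w^{(\nu)})^{*}\varphi^{*}\,dx\,dt\sim\langle z,\varphi\rangle$ for every positive infinitesimal $\nu$; since also $\left\Vert (w^{(\nu)})^{*}\right\Vert_{L^{2}}^{2}\le T\left\Vert u_{0}\right\Vert_{L^{2}}^{2}$ by (transferred) energy dissipation, $(w^{(\nu)})^{*}$ is a bounded generalized distribution and Theorem \ref{bello} gives $\left[(w^{(\nu)})^{*}\right]_{\mathscr D(I\times\Omega)}=z$.

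The first point is where the ``large infinitesimal'' restriction is forced. I would unwind the construction of $u_{\nu}$ from Lemma \ref{LEmmaEntropy}: writing $(U_{\lambda}(\mathbb R))_{\lambda\in\mathfrak L}$ for an approximating net of $U_{\Lambda}(\mathbb R)$, $u_{\nu}$ is the $\Lambda$-limit of the Faedo--Galerkin solutions $u_{\lambda}$ of (\ref{jessica}) in the finite dimensional spaces $U_{\lambda}(\mathbb R)$, with $\dim U_{\lambda}\uparrow\infty$ along $\mathcal U$ and with initial datum eventually equal to $u_{0}$ (legitimate because $u_{0}\in H_{c}^{1}(\mathbb R)\subseteq U_{\Lambda}$, hence $u_{0}\in U_{\lambda}$ for qualified $\lambda$ by Theorem \ref{vecchiaroba}). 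The classical Faedo--Galerkin theory for the uniformly parabolic equation (\ref{VBE}) (parabolic, with a locally Lipschitz lower-order nonlinearity) gives, for each fixed standard $\nu>0$, $\rho(\lambda,\nu):=\left\Vert u_{\lambda}-w^{(\nu)}\right\Vert_{L^{2}}\to0$ as $\lambda\uparrow\Lambda$; taking $\Lambda$-limits, $R(\nu):=\left\Vert u_{\nu}-(w^{(\nu)})^{*}\right\Vert_{L^{2}}=\lim_{\lambda\uparrow\Lambda}\rho(\lambda,\nu_{\lambda})$ is then an internal function of $\nu$ with $R(\nu)\sim0$ for every standard $\nu>0$. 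I would also record the a priori bound $\left\Vert u_{\nu}(t)\right\Vert_{L^{2}}\le\left\Vert u_{0}\right\Vert_{L^{2}}$, which follows from $\int(\partial_{t}u_{\nu})u_{\nu}\,dx=-\nu\int(\partial_{x}u_{\nu})^{2}dx\le0$ (using the convective computation of Theorem \ref{elisa} and property (\ref{enu: Thm 23 (4)}) of $U_{\Lambda}$); this makes $\left[u_{\nu}\right]_{\mathscr D}$ a bounded generalized distribution and keeps $R(\nu)$ finite.

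From here the argument is an overspill. After replacing $\rho(\lambda,\nu)$ by $\sup_{\mu\ge\nu}\rho(\lambda,\mu)$ --- so that $R$ becomes nonincreasing in $\nu$ while still tending to $0$ as $\lambda\uparrow\Lambda$ for each fixed $\nu$ --- the internal sequence $n\mapsto R(1/n)$ is infinitesimal for every standard $n$, so Robinson's sequential lemma produces an infinite $M$ with $R(1/n)\sim0$ for all $n\le M$; monotonicity in $\nu$ then yields $R(\nu)\sim0$ for every $\nu\in[\nu_{0},1]$ with $\nu_{0}:=1/M$ infinitesimal. Consequently, for every infinitesimal $\nu\ge\nu_{0}$ and every $\varphi\in\mathscr D(I\times\Omega)$,
\[
\left|\int\int^{*}\left(u_{\nu}-(w^{(\nu)})^{*}\right)\varphi^{*}\,dx\,dt\right|\le R(\nu)\left\Vert \varphi^{*}\right\Vert_{L^{2}}\sim0,
\]
so $\left[u_{\nu}\right]_{\mathscr D(I\times\Omega)}=\left[(w^{(\nu)})^{*}\right]_{\mathscr D(I\times\Omega)}=z$, which is the claim.

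The step I expect to be the main obstacle is precisely the uniform-in-$\nu$ control of the Faedo--Galerkin error needed for the overspill: one must upgrade ``$\rho(\lambda,\nu)\to0$ for each fixed $\nu$'' to enough monotonicity (or uniform continuity) in $\nu$ to pass from ``$R$ infinitesimal at every standard viscosity'' to ``$R$ infinitesimal on a whole final segment of infinitesimals''. This is the quantitative counterpart of the fact that the number of Galerkin modes required to resolve (\ref{VBE}) degrades as $\nu\to0$, and it is exactly why only a final segment $[\nu_{0},\cdot)$ of infinitesimals --- rather than all of them --- is reached; passing through it cleanly will require the classical uniform-on-compacts Faedo--Galerkin estimates for the viscous Burgers' equation (together with the large-viscosity smoothing for $\nu\ge1$). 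All other ingredients --- the vanishing-viscosity theorem, the energy estimate, the identification of $u_{\nu}$ as a hyperfinite Galerkin solution --- are classical or already available from Lemma \ref{LEmmaEntropy} and Theorem \ref{vecchiaroba}.
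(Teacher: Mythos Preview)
Your argument follows the same three-step scheme as the paper: compare $u_{\nu}$ with the classical viscous solution $w^{(\nu)}$, pass from standard to infinitesimal viscosity by an overspill-type argument, and then invoke the vanishing-viscosity characterisation of $z$. The last step is handled identically in both.

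The difference lies in the comparison step. The paper simply asserts that for every standard $\nu>0$ one has the \emph{exact} equality $u_{\nu}=(w^{(\nu)})^{*}$ (on the grounds that the classical viscous problem has a unique solution $w^{(\nu)}$), and then applies overspill directly to the internal predicate ``$u_{\nu}=w_{\nu}$'' to produce the infinitesimal threshold $\nu_{0}$; this makes the overspill completely soft and sidesteps any quantitative Galerkin estimate. You instead track the $L^{2}$ error $R(\nu)=\|u_{\nu}-(w^{(\nu)})^{*}\|_{L^{2}}$, argue it is infinitesimal for standard $\nu$ via Faedo--Galerkin convergence, and then overspill on $R$ using Robinson's lemma together with a monotonicity device. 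What your longer route buys is honesty about the analytic content: the paper does not explain why $(w^{(\nu)})^{*}$ should lie in $\mathcal{C}^{1}(I^{*},U_{\Lambda}(\mathbb{R}))$ in the first place --- for nonstandard $t$ the slice $(w^{(\nu)})^{*}(t,\cdot)$ is only an element of $(H^{1}_{\flat})^{*}$, not automatically of the hyperfinite space $U_{\Lambda}$ --- so the claimed equality is at best formal, and your approximate version $R(\nu)\sim0$ is really what is doing the work. The obstacle you flag (uniform-in-$\nu$ Galerkin control) is exactly the point the paper's shortcut hides.

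One simplification of your overspill: replacing $\rho(\lambda,\nu)$ by $\sup_{\mu\ge\nu}\rho(\lambda,\mu)$ forces uniform Galerkin control out to $\mu=\infty$, which is stronger than needed. It is cleaner to set $S(\nu_{0}):=\sup_{\nu\in[\nu_{0},1]}R(\nu)$; uniform Faedo--Galerkin convergence on the \emph{compact} standard interval $[\nu_{0},1]$ (standard parabolic theory, the constants depending continuously on $\nu$ bounded away from $0$) yields $S(\nu_{0})\sim0$ for every standard $\nu_{0}\in(0,1]$, and a single overspill on the internal nonincreasing function $S$ then produces an infinitesimal $\nu_{0}$ with $S(\nu_{0})\sim0$, i.e.\ $R(\nu)\sim0$ on the whole segment $[\nu_{0},1]$.
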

\begin{proof}
For every real number $\nu$ we have that the standard problem 
\[
\left\{ \begin{array}{c}
w\in\mathcal{C}^{1}(I,H_{\flat}^{1}(\mathbb{R})),\\
\\
\partial_{t}w(t,x)+w\partial_{x}w(t,x)=\nu\partial_{x}^{2}w(t,x),\\
\\
w\left(0\right)=u_{0}
\end{array}\right.
\]
has a unique solution $w_{\nu}$. Therefore for every real number
$\nu$ we have $u_{\nu}=w_{\nu}^{\ast}$. For overspill we therefore
have that there exists an infinitesimal number $\nu_{0}$ such that,
for every infinitesimal $\nu\geq\nu_{0}$, $u_{\nu}=w_{\nu}$, where
$w_{\nu}$ is the solution of the problem 
\[
\left\{ \begin{array}{c}
w\in\mathcal{C}^{1}(I,H_{\flat}^{1}(\mathbb{R}))^{\ast},\\
\\
\partial_{t}w(t,x)+w\partial_{x}w(t,x)=\nu\partial_{x}^{2}w(t,x),\\
\\
w\left(0\right)=u_{0}^{\ast}.
\end{array}\right.
\]

But as $z=\lim\limits _{\varepsilon\rightarrow0^{+}}v_{\varepsilon}$,
we have that for every infinitesimal number $\nu$, for every test
function $\varphi$ we have that 
\[
\left\langle z^{\ast}-v_{\nu},\varphi^{\ast}\right\rangle \sim0.
\]

In particular for every infinitesimal $\nu\geq\nu_{0}$, 
\[
\left\langle z^{\ast}-u_{\nu},\varphi^{\ast}\right\rangle \sim0,
\]
and as this holds for every test function $\varphi$ we have our thesis. 
\end{proof}
Theorem \ref{EntropyGUS} shows that, for a standard initial value
$u_{0}$, there exists a ultrafunction which corresponds to the entropy
solution of Burgers' equation; moreover, this ultrafunction solves
a viscous Burgers' equation for an infinitesimal viscosity (namely,
it is the solution of an infinitesimal perturbation of Burgers' equation).
However, within ultrafunctions theory there is another ``natural''
solution of Burgers' equation for a standard initial value $u_{0}$,
namely the unique ultrafunction $u$ that solves Problem \ref{BEG}.
We already proved in Theorem \ref{thm:approssimazione} that $u$
corresponds (in the sense of Definition \ref{DEfCorrespondenceDistrUltra})
to a weak solution of Burgers' equation. Our conjecture is that this
weak solution is precisely the entropy solution; however, we have
not been able to prove this (yet!). Nevertheless, in any case it makes
sense to analyse this solution: this will be done in the next section.

\subsection{The microscopic part\label{sub:The-microscopic-part}}

Let $u\in C^{1}(I^{\ast},U_{\Lambda})$ be the GUS of (\ref{BEG})
and let $w=[u]_{\mathscr{D}}$. With some abuse of notation we will
identify the distribution $w$ with a $L^{2}$ function. We want to
compare $u$ and $w^{*}$ and to give a physical interpretation of
their difference.

Since we have that

\[
\left[u\right]_{\mathscr{D}}=\left[w^{*}\right]_{\mathscr{D}}
\]
we can write 
\[
u=w^{\ast}+\psi;
\]
we have that 
\[
\forall\varphi\in\mathscr{D}\left(I\times\Omega\right),\int\int^{\ast}u\varphi^{\ast}\ dx\ dt\sim\int\int w\varphi\ dx\ dt
\]
and 
\begin{equation}
\int\int^{\ast}\psi\varphi^{\ast}\ dx\ dt\sim0.\label{micra}
\end{equation}
We will call $w$ (and $w^{\ast}$) the macroscopic part of $u$ and
$\psi$ the microscopic part of $u$; in fact, we can interpret (\ref{micra})
by saying that $\psi$ does not appear to a mascroscopic analysis.
On the other hand, $\int^{\ast}\psi\varphi\ dx\ dt\not\sim0$ for
some $\varphi\in C^{1}(I^{\ast},U_{\Lambda})\backslash\mathscr{D}\left(I\times\Omega\right)$.
Such a $\varphi$ ``is able'' to detect the \emph{infinitesimal
oscillations} of $\psi$. This justifies the expression \textquotedbl{}macroscopic
part\textquotedbl{} and \textquotedbl{}microscopic part\textquotedbl{}.
So, in the case of Burgers equation, the ultrafunctions do not produce
a solution to a problem without solutions (as in the example of section
\ref{se}), but they give a different description of the phenomenon,
namely they provide also the information contained in the microscopic
part $\psi.$

So let us analyze it: 
\begin{prop}
The microscopic part $\psi$ of the GUS solution of problem (\ref{BEG})
satisfies the following properties: 
\begin{enumerate}
\item \label{enu:the-momentum-of}the momentum of $\psi$ vanishes: 
\[
\int\psi\ dx=0;
\]

\item \label{enu:-and-}$w^{\ast}$ and $\psi$ are almost orthogonal: 
\[
\int\int\psi w^{\ast}\ dxdt\sim0;
\]

\item \label{enu:the-energy-of}the energy of $u$ is the sum of the kinetic
macroscopic energy, $\int\left\vert w(t,x)\right\vert ^{2}\ dx$,
the kinetic microscopic energy (heat)\textup{ $\int\left\vert \psi(t,x)\right\vert ^{2}\ dx$
}\textup{\emph{and an infinitesimal quantity;}} 
\item \label{enu:if--is}if $w$ is the entropy solution then the ``heat''
$\int\left\vert \psi(t,x)\right\vert ^{2}\ dx$ increases. 
\end{enumerate}
\end{prop}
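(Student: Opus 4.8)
The plan is to take the four items in order, using only the momentum and energy conservation of Theorem~\ref{eliseo}, the characterisation of $[\,\cdot\,]_{\mathscr{D}}$ from Theorem~\ref{bello}, and density of test functions; I take $u_{0}\in(H_{c}^{1}(\mathbb{R}))^{\sigma}$ as in this section, abbreviate $\int^{\ast}$ by $\int$, and restrict to a finite time interval. For item~\ref{enu:the-momentum-of}: by Theorem~\ref{eliseo} the momentum $\int u(t,x)\,dx$ does not depend on $t$, so it equals $\int u_{0}(x)\,dx=\int[u_{0}]_{\mathscr{D}}(x)\,dx$; on the other hand $w=[u]_{\mathscr{D}}$ is a weak solution of \eqref{BE} (Theorem~\ref{thm:approssimazione}), so $t\mapsto\int w(t,x)\,dx$ is likewise a constant of motion (conservation of mass for a scalar conservation law), again equal to $\int[u_{0}]_{\mathscr{D}}(x)\,dx$, and by transfer $\int w^{\ast}(t,x)\,dx=\bigl(\int w(t,x)\,dx\bigr)^{\ast}$; subtracting, $\int\psi(t,x)\,dx=0$. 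For item~\ref{enu:-and-}: the energy bound $\int u^{2}\,dx=\int u_{0}^{2}\,dx$ established in the proof of Theorem~\ref{thm:approssimazione} gives $w\in L^{2}(I\times\Omega)$ and a \emph{standard} constant $M$ with $\|\psi\|\le\|u\|+\|w^{\ast}\|\le M$; given a standard $\varepsilon>0$, pick a standard $\varphi\in\mathscr{D}(I\times\Omega)$ with $\|w-\varphi\|_{L^{2}(I\times\Omega)}<\varepsilon$ and split
\[
\int\!\!\int\psi\,w^{\ast}\,dx\,dt=\int\!\!\int\psi\,(w^{\ast}-\varphi^{\ast})\,dx\,dt+\int\!\!\int\psi\,\varphi^{\ast}\,dx\,dt .
\]
The second term is infinitesimal since $[\psi]_{\mathscr{D}}=0$, the first has modulus $\le\|\psi\|\,\|w^{\ast}-\varphi^{\ast}\|\le M\varepsilon$, and since $\int\!\!\int\psi\,w^{\ast}$ is finite and $\varepsilon>0$ is an arbitrary standard number, it is infinitesimal.

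Item~\ref{enu:the-energy-of} is the delicate one. Expanding the square, $\int|u(t,x)|^{2}\,dx=\int|w^{\ast}(t,x)|^{2}\,dx+2\int w^{\ast}(t,x)\psi(t,x)\,dx+\int|\psi(t,x)|^{2}\,dx$, so it reduces to showing that the cross term is infinitesimal \emph{for each standard $t$}; by the same density split as in item~\ref{enu:-and-} (now with $\|\psi(t,\cdot)\|_{L^{2}(\Omega)}\le\|u_{0}\|+\|w(t,\cdot)\|$, a standard bound) this amounts to $[\psi(t,\cdot)]_{\mathscr{D}(\Omega)}=0$ for standard $t$. I would prove the latter by testing $[\psi]_{\mathscr{D}}=0$ against separated functions $\chi(t)\varphi(x)$: this gives $\int_{I}\chi^{\ast}(t)\,g(t)\,dt\sim0$ for every $\chi\in\mathscr{D}(I)$, where $g(t)=\int u(t,x)\varphi^{\ast}(x)\,dx-\bigl(\int w(t,x)\varphi(x)\,dx\bigr)^{\ast}$; since $t\mapsto\int w(t,x)\varphi(x)\,dx$ is continuous and $g$ is finite-valued, \emph{if} moreover $t\mapsto\int u(t,x)\varphi^{\ast}(x)\,dx$ is $S$-continuous on finite $t$, then the $\sim$-standard part of $g$ is a genuine continuous function on $I$ annihilated by every test function, hence $\equiv0$, i.e. $g(t)\sim0$ at standard $t$. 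The $S$-continuity should come from the Burgers structure: by \eqref{BEG} with $v=\varphi^{\ast}\in U_{\Lambda}(\mathbb{R})$,
\[
\frac{d}{dt}\int u(t,x)\varphi^{\ast}(x)\,dx=-\int u(t,x)\,(Du)(t,x)\,\varphi^{\ast}(x)\,dx ,
\]
and, after an integration by parts (cf. property~(4) of Theorem~\ref{vecchiaroba}), this should equal $-\tfrac12\int u^{2}(D\varphi^{\ast})\,dx=-\tfrac12\int u^{2}(\varphi')^{\ast}\,dx$, of modulus $\le\tfrac12\|\varphi'\|_{\infty}\|u_{0}\|_{L^{2}}^{2}$, a standard constant; hence $t\mapsto\int u\,\varphi^{\ast}\,dx$ is Lipschitz with a standard constant, so $S$-continuous, and item~\ref{enu:the-energy-of} follows.

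Item~\ref{enu:if--is} then follows from item~\ref{enu:the-energy-of} together with conservation of $E(u)=\tfrac12\int u^{2}\,dx$ (Theorem~\ref{eliseo}): one obtains $\tfrac12\int|\psi(t,x)|^{2}\,dx=E(u_{0})-\tfrac12\int|w^{\ast}(t,x)|^{2}\,dx+\eta(t)$ with $\eta(t)\sim0$; and if $w$ is the entropy solution of \eqref{BE} then the convex entropy pair $\bigl(\tfrac12 u^{2},\tfrac13 u^{3}\bigr)$ yields $\partial_{t}(\tfrac12 w^{2})+\partial_{x}(\tfrac13 w^{3})\le0$ in $\mathscr{D}'$, so $t\mapsto\int|w(t,x)|^{2}\,dx$ is non-increasing and, by transfer, so is $t\mapsto\int|w^{\ast}(t,x)|^{2}\,dx$; therefore $t\mapsto\int|\psi(t,x)|^{2}\,dx$ is non-decreasing up to an infinitesimal, which is the asserted increase of the heat.

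The main obstacle is exactly the integration by parts flagged in item~\ref{enu:the-energy-of}: since the ultrafunction product and the ultrafunction derivative $D=\widetilde{\partial}$ do not obey the Leibniz rule, the identity $\int u\,(Du)\,\varphi^{\ast}\,dx=-\tfrac12\int u^{2}(D\varphi^{\ast})\,dx$ cannot be obtained formally. The natural route is to pass through the genuine weak derivative $\partial^{\ast}$ on $[H^{1}_{\flat}(\mathbb{R})]^{\ast}$ — which is a differential algebra in dimension one, so that $u\varphi^{\ast}$ there has a weak derivative and the classical integration by parts applies — and then to control the error term $\int\bigl(\partial^{\ast}u-Du\bigr)\,u\varphi^{\ast}\,dx$ produced by the orthogonal projection $P_{\Lambda}$. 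Showing that this error term is infinitesimal — or otherwise circumventing it — is where the real work lies. (If only a space--time-integrated form of items~\ref{enu:the-energy-of} and~\ref{enu:if--is} is wanted, item~\ref{enu:-and-} already suffices and no $S$-continuity is needed.)
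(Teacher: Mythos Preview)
Your proofs of items~\ref{enu:the-momentum-of}, \ref{enu:-and-}, and \ref{enu:if--is} are correct and essentially match the paper's. For \ref{enu:-and-} you use a density-plus-$\varepsilon$ argument, while the paper picks an $L^{2}$-approximating sequence $\varphi_{n}\to w$, invokes $\int\!\!\int\psi\,\varphi_{n}\sim0$ for finite $n$, and passes to an infinite $N$ by overspill; your route is arguably cleaner, since the overspill step is applied to a statement involving $\sim$, which is external.

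Where you diverge is item~\ref{enu:the-energy-of}. The paper's entire proof of \ref{enu:the-energy-of} is the sentence ``This follows easily from (\ref{enu:-and-})'', and its proof of \ref{enu:if--is} then uses \ref{enu:the-energy-of} in the per-$t$ sense. You have correctly noticed that \ref{enu:-and-} is a \emph{space--time integrated} statement while \ref{enu:the-energy-of} is naturally read as a per-$t$ statement, and that bridging the two requires knowing $\int w^{\ast}(t,\cdot)\psi(t,\cdot)\,dx\sim0$ at each standard $t$, i.e.\ $[\psi(t,\cdot)]_{\mathscr{D}(\Omega)}=0$. Your $S$-continuity programme for this is reasonable, and your diagnosis of the obstacle is accurate: the ultrafunction derivative $D=P_{\Lambda}\circ\partial^{\ast}$ does not obey Leibniz, so the step $\int u\,(Du)\,\varphi^{\ast}\,dx=-\tfrac12\int u^{2}\,(\varphi')^{\ast}\,dx$ needs either a control on the projection error $\int(\partial^{\ast}u-Du)\,u\varphi^{\ast}\,dx$ or a direct argument in $[H^{1}_{\flat}(\mathbb{R})]^{\ast}$. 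This is a genuine gap in your argument---but it is a gap you share with the paper, which simply does not address it. Your closing remark, that the time-integrated reading of \ref{enu:the-energy-of} and \ref{enu:if--is} follows immediately from \ref{enu:-and-}, is exactly the content of what the paper actually proves.
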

\begin{proof}
\ref{enu:the-momentum-of}) $\int\psi\,dx=\int u\,dx-\int w^{\ast}\,dx$,
and the conclusion follows as both $u$ and $w$ preserve the momentum.

\ref{enu:-and-}) First of all we observe that the $L^{2}$ norm of
$\psi$ is finite, as $\psi=u-w^{\ast}$ and the $L^{2}$ norms of
$u$ and $w$ are finite. Now let $\{\varphi_{n}\}_{n\in\mathbb{N}}$
be a sequence in $\mathscr{D}(I\times\Omega)$ that converges strongly
to $w$ in $L^{2}$. Let $\{\varphi_{\nu}\}_{\nu\in\mathbb{N}^{\ast}}$
be the extension of this sequence. As $\varphi_{n}\rightharpoonup w$
in $L^{2}$, we have that for any infinite number $N\in\mathbb{N}$
$\left\Vert \varphi_{N}-w^{\ast}\right\Vert _{L^{2}}\sim0$. For every
finite number $n\in\mathbb{N^{\ast}}$ we have that 
\[
\int\psi\varphi_{n}dxdt=0,
\]
as $\varphi_{n}\in\mathscr{D}(I\times\Omega$). By overspill, there
exists an infinite number $N$ such that $\int\psi\varphi_{N}dxdt=0.$
If we set $\eta=w^{\ast}-\varphi_{N}$, we have $\left\Vert \eta\right\Vert _{L^{2}}\sim0$.
Then 
\begin{eqnarray*}
\left|\int\psi w^{\ast}dxdt\right| & = & \left|\int\psi(\varphi_{N}+\eta)dxdt\right|\\
 & = & \left|\int\psi\varphi_{N}dxdt+\int\psi\eta dxdt\right|\sim0,
\end{eqnarray*}
as $\int\psi\varphi_{N}dxdt=0$ and $\left|\int\psi\eta dxdt\right|\leq\int\left|\psi\right|\left|\eta\right|dxdt\leq\left(\left\Vert \psi\right\Vert _{L^{2}}\cdot\left\Vert \eta\right\Vert _{L^{2}}\right)^{\frac{1}{2}}\sim0$.

\ref{enu:the-energy-of}) This follows easily from (\ref{enu:-and-}).

\ref{enu:if--is}) The energy of $u=w^{\ast}+\psi$ is constant, while
the energy of $w^{\ast}$, if $w$ is the entropy solution, decreases.
Therefore we deduce our thesis from (\ref{enu:the-energy-of}). 
\end{proof}
Now let $\Omega\subset I\times\mathbb{R}$ be the region where $w$
is regular (say $H^{1}$) and let $\Sigma=\left(I\times\mathbb{R}\right)\backslash\Omega$
be the singular region. We have the following result: 
\begin{thm}
$\psi$ satisfies the following equation in the sense of ultrafunctions:
\[
\partial_{t}\psi+\partial_{x}\left(\mathbf{V}\psi\right)=F,
\]
where 
\begin{equation}
\mathbf{V}=\mathbf{V}(w,\psi)=w(t,x)+\frac{1}{2}\psi(t,x)\label{eq:bella}
\end{equation}
and 
\[
\mathfrak{supp}\left(F(t,x)\right)\subset N_{\varepsilon}(\Sigma),
\]
where $N_{\varepsilon}(\Sigma)$ is an infinitesimal neighborhood
of $\Sigma^{\ast}.$\end{thm}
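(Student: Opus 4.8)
The plan is to derive the equation for the microscopic part $\psi$ by subtracting the (hyperextension of the) macroscopic equation from the GUS equation, keeping careful track of where the error terms are supported. Recall that $u = w^{\ast}+\psi$ solves (\ref{BEG}), so that for every $v\in U_{\Lambda}(\mathbb{R})$ we have $\int(\partial_{t}u)v\,dx = -\int(u\,\partial_{x}u)v\,dx$. Expanding $u\,\partial_{x}u = \partial_{x}(u^{2}/2)$ with $u = w^{\ast}+\psi$ gives
\[
u\,\partial_{x}u \;=\; \partial_{x}\!\left(\tfrac{1}{2}(w^{\ast})^{2} + w^{\ast}\psi + \tfrac{1}{2}\psi^{2}\right)
\;=\; w^{\ast}\partial_{x}w^{\ast} + \partial_{x}\!\left((w^{\ast}+\tfrac{1}{2}\psi)\psi\right),
\]
which already produces the announced velocity field $\mathbf{V} = w + \tfrac{1}{2}\psi$ from (\ref{eq:bella}). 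The first step is thus purely algebraic: rewrite the flux of $u$ as the flux of $w^{\ast}$ plus $\partial_{x}(\mathbf{V}\psi)$.

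Next I would handle the macroscopic equation. On the regular region $\Omega$ the function $w$ is $H^{1}$ and satisfies Burgers' equation in the classical (weak) sense, so $\partial_{t}w^{\ast} + w^{\ast}\partial_{x}w^{\ast} = 0$ holds pointwise on $\Omega^{\ast}$. Using $\partial_{t}u = \partial_{t}w^{\ast} + \partial_{t}\psi$ together with the GUS identity and the algebraic rewriting above, for every $v\in U_{\Lambda}(\mathbb{R})$ we obtain
\[
\int(\partial_{t}\psi)v\,dx \;=\; -\int\partial_{x}(\mathbf{V}\psi)\,v\,dx \;-\; \int\bigl(\partial_{t}w^{\ast} + w^{\ast}\partial_{x}w^{\ast}\bigr)v\,dx,
\]
so that, defining $F := -P_{\Lambda}\!\left(\partial_{t}w^{\ast} + w^{\ast}\partial_{x}w^{\ast}\right)$ as an ultrafunction (via the canonical projection, using that $\partial_{t}w^{\ast}$ and $w^{\ast}\partial_{x}w^{\ast}$ lie in $[L^{2}]^{\ast}$ up to the usual care about where $w$ is merely $L^{2}$), we get exactly $\partial_{t}\psi + \partial_{x}(\mathbf{V}\psi) = F$ in the sense of ultrafunctions, i.e. tested against every $v\in U_{\Lambda}(\mathbb{R})$. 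The support statement then reduces to showing that $\partial_{t}w + w\,\partial_{x}w$, as a distribution/function, is supported in $\Sigma$: indeed on $\Omega$ it vanishes classically, hence $F$ is concentrated on $\Sigma^{\ast}$, and passing from the standard support to its ultrafunction representative introduces at most an infinitesimal spreading, which is why the conclusion is phrased with the infinitesimal neighborhood $N_{\varepsilon}(\Sigma)$ of $\Sigma^{\ast}$.

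The main obstacle I anticipate is the last point: making precise in what sense $F$ is "supported in $N_{\varepsilon}(\Sigma)$". The function $w$ is only $L^{2}$ globally, it is $H^{1}$ on $\Omega$, and its singularities lie on shock curves; the quantity $\partial_{t}w + w\partial_{x}w$ is a distribution supported on $\Sigma$ (a measure along the shocks, essentially the Rankine--Hugoniot defect). When we replace $w$ by $w^{\ast}$ and then project onto $U_{\Lambda}(\mathbb{R})$ via $P_{\Lambda}$, the projection is nonlocal, so $F = P_{\Lambda}(\dots)$ need not be \emph{exactly} supported on $\Sigma^{\ast}$; what one shows is that, up to infinitesimals, the mass of $F$ outside any standard neighborhood of $\Sigma$ is negligible, and by overspill this can be sharpened to an infinitesimal neighborhood $N_{\varepsilon}(\Sigma)$. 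I would make this rigorous by testing $F$ against $v\in U_{\Lambda}(\mathbb{R})$ whose support stays a standard distance from $\Sigma$: on such a region $w^{\ast}$ agrees with a smooth Burgers solution, the integrand is infinitesimally small, and a standard overspill argument (as in the proof of Theorem~\ref{EntropyGUS}) upgrades "infinitesimally small outside every standard neighborhood" to "zero outside an infinitesimal neighborhood". The remaining steps (the algebraic rewriting of the flux, the use of Definition~\ref{b} and equation~(\ref{sd}) to interpret $\partial_{x}(\mathbf{V}\psi)$ as an ultrafunction, and the verification that $\mathbf{V}$ is as in (\ref{eq:bella})) are routine.
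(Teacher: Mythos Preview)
Your approach is essentially the same as the paper's: subtract the (hyperextension of the) macroscopic Burgers equation $\partial_t w + w\partial_x w = 0$, valid on the regular set $\Omega$, from the GUS equation for $u$, and use the algebraic identity $u\partial_x u = w^{\ast}\partial_x w^{\ast} + \partial_x(\mathbf{V}\psi)$ to isolate the equation for $\psi$; the forcing $F$ is then automatically concentrated near $\Sigma^{\ast}$ because $\partial_t w + w\partial_x w$ vanishes on $\Omega$. The paper's proof is extremely terse --- it simply records that the homogeneous equation for $\psi$ holds on $\Omega^{\ast}\setminus N_{\varepsilon}(\Sigma)$ --- whereas you go further by giving $F$ the explicit form $-P_{\Lambda}(\partial_t w^{\ast}+w^{\ast}\partial_x w^{\ast})$ and by outlining an overspill argument for the support statement; this extra care is welcome and does not deviate from the paper's line of reasoning.
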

\begin{proof}
In $\Omega$ we have that 
\[
\partial_{t}w+w\partial_{x}w=0
\]
Since $u=w+\psi$ satisfies the following equation (in the sense of
ultrafunctions), 
\[
D_{t}u+P\left(u{}_{x}\partial u\right)=0
\]
we have that $\psi$ satisfies the equation, 
\[
D_{t}\psi+P\left[\partial_{x}\left(w\psi+\frac{1}{2}\psi^{2}\right)\right]=0
\]
in $\Omega^{\ast}\backslash N_{\varepsilon}(\Sigma)$ where $N_{\varepsilon}(\Sigma)$
is an infinitesimal neighborhood of $\Sigma^{\ast}.$ 
\end{proof}
As we have seen $\psi^{2}$ can be interpreted as the density of heat.
Then $\mathbf{V}$ can be interpreted as the flow of $\psi$; it consists
of two parts: $w$ which is the macroscopic component of the flow
and $\frac{1}{2}\psi(t,x)$ which is the transport due to the Brownian
motion.

\end{document}